\documentclass[journal]{IEEEtran}

\usepackage[ruled,linesnumbered]{algorithm2e}
\usepackage{subfigure}
\usepackage{epsfig} % for postscript graphics files
\usepackage{mathptmx} % assumes new font selection scheme installed
\usepackage{times} % assumes new font selection scheme installed
\usepackage{amsmath} % assumes amsmath package installed
\usepackage{amssymb}  % assumes amsmath package installed
\usepackage{amsthm}
\usepackage{makeidx}
\usepackage{float}
\usepackage{balance}
\usepackage{booktabs}
\usepackage{bbm}
\usepackage{bm}
\usepackage{caption}
\usepackage{nomencl}
\usepackage{dblfloatfix}

\newtheorem{theorem}{Theorem}[section]
\newtheorem{lemma}{Lemma}[section]
\newtheorem{remark}{Remark}[section]
\newtheorem{prop}{Proposition}[section]
\newtheorem{ass}{Assumption}[section]
\IEEEoverridecommandlockouts                              % This command is only needed if 
                                                          % you want to use the \thanks command
\usepackage{etoolbox}
\makeatletter
\patchcmd{\@makecaption}
  {\scshape}
  {}
  {}
  {}
\makeatother

\newcommand{\rd}{{\mathrm d}}

%\newcommand{\tx}{{\tilde{\bf x}}}

%

% correct bad hyphenation here
\hyphenation{op-tical net-works semi-conduc-tor}

\usepackage[]{algpseudocode}

\begin{document}
%
% paper title
% can use linebreaks \\ within to get better formatting as desired
\title{Stochastic Trajectory Optimization for Mechanical Systems with Parametric Uncertainties}
%
%
% author names and IEEE memberships
% note positions of commas and nonbreaking spaces ( ~ ) LaTeX will not break
% a structure at a ~ so this keeps an author's name from being broken across
% two lines.
% use \thanks{} to gain access to the first footnote area
% a separate \thanks must be used for each paragraph as LaTeX2e's \thanks
% was not built to handle multiple paragraphs
%
%
%\IEEEcompsocitemizethanks is a special \thanks that produces the bulleted
% lists the Computer Society journals use for "first footnote" author
% affiliations. Use \IEEEcompsocthanksitem which works much like \item
% for each affiliation group. When not in compsoc mode,
% \IEEEcompsocitemizethanks becomes like \thanks and
% \IEEEcompsocthanksitem becomes a line break with idention. This
% facilitates dual compilation, although admittedly the differences in the
% desired content of \author between the different types of papers makes a 
% one-size-fits-all approach a daunting prospect. For instance, compsoc 
% journal papers have the author affiliations above the "Manuscript
% received ..."  text while in non-compsoc journals this is reversed. Sigh.

\author{ George I. Boutselis$^{1}$, Yunpeng Pan$^{1}$, Gerardo De La Torre$^{2}$ and Evangelos A.   Theodorou$^{3}$
\thanks{$^{1}$ George I. Boutselis and Yunpeng Pan are PhD graduate students in the School of Aerospace Engineering at Georgia Institute of Technology
     }
     \thanks{$^{2}$ Gerardo De La Torre is postdoctoral researcher in Northwestern University
     }   
  \thanks{$^{3}$ Evangelos A. Theodorou is  Assistant Professor with the School of Aerospace Engineering at Georgia Institute of Technology
     }   
}

% The paper headers
\markboth{Journal of \LaTeX\ Class Files,~Vol.~13, No.~9, September~2014}%
{Shell \MakeLowercase{\textit{et al.}}: Bare Advanced Demo of IEEEtran.cls for Journals}

\IEEEtitleabstractindextext{%
\begin{abstract}

 In this paper we develop a novel, discrete-time optimal control framework for mechanical systems with uncertain model parameters. We consider finite-horizon problems where the performance index depends on the statistical moments of the stochastic system. Our approach constitutes an extension of the original Differential Dynamic Programming method and handles uncertainty through generalized Polynomial Chaos (gPC) theory. The developed iterative scheme is capable of controlling the probabilistic evolution of the dynamic system and can be used in planning and control. Moreover, its scalable and fast-converging nature play a key role when dealing with complex, high-dimensional problems.
 
 Based on Lagrangian mechanics principles, we also prove that Variational Integrators can be designed to properly propagate and linearize the gPC representations of stochastic, forced mechanical systems in discrete time. We utilize this benefit to further improve the efficiency of our trajectory-optimization methodology. Numerical simulations are included to validate the applicability of our approach.

\end{abstract}

% Note that keywords are not normally used for peerreview papers.
\begin{IEEEkeywords}
\noindent Trajectory Optimization, Polynomial Chaos, Differential Dynamic Programming, Discrete mechanics
 \end{IEEEkeywords}}

% make the title area
\maketitle

% To allow for easy dual compilation without having to reenter the
% abstract/keywords data, the \IEEEcompsoctitleabstractindextext text will
% not be used in maketitle, but will appear (i.e., to be "transported")
% here as \IEEEdisplaynotcompsoctitleabstractindextext when compsoc mode
% is not selected <OR> if conference mode is selected - because compsoc
% conference papers position the abstract like regular (non-compsoc)
% papers do!

\IEEEdisplaynontitleabstractindextext
% \IEEEdisplaynotcompsoctitleabstractindextext has no effect when using
% compsoc under a non-conference mode.

% For peer review papers, you can put extra information on the cover
% page as needed:
% \ifCLASSOPTIONpeerreview
% \begin{center} \bfseries EDICS Category: 3-BBND \end{center}
% \fi
%
% For peerreview papers, this IEEEtran command inserts a page break and
% creates the second title. It will be ignored for other modes.
\IEEEpeerreviewmaketitle

\section*{Nomenclature}
\addcontentsline{toc}{section}{Nomenclature}
\begin{IEEEdescription}[\IEEEusemathlabelsep]
\item[$\xi$] Set of mutually independent random variables
\item[$\rho$] Probability density function of $\xi$
\item[$\phi$] Polynomials orthogonal with respect to $\rho$
\item[$x$] State vector
\item[$u$] Control vector
\item[$f$] Equations of dynamics
\item[$L$] Lagrangian
\item[$F$] Set of non-conservative forces
\item[$q$] Generalized position coordinates
\item[$p$] Generalized momentum coordinates
\item[$\textbf X$] Polynomial Chaos expansion coefficients of the state vector
\item[$\textbf f$] Polynomial Chaos representation of dynamics
\item[$\hat L$] Polynomial Chaos representation of the Lagrangian
\item[$\hat {\textbf F}$] Polynomial Chaos representation of non-conservative forces
\item[$\textbf Q$] Polynomial Chaos expansion coefficients of $q$
\item[$\textbf P$] Polynomial Chaos expansion coefficients of $p$
\item[$V$] Value function for the optimal control problem
\item[$Q$] $Q$-function used in gPC-DDP
\item[$\bm L$] Running cost for the Polynomial Chaos-based optimal control problem
\item[$\bm F$] Terminal cost for the Polynomial Chaos-based optimal control problem
\item[$\bm J$] Total cost for the Polynomial Chaos-based optimal control problem
\item[$\mathsf f^k$] Discrete version of the argument $\mathsf f$, evaluated at the $k^\text{th}$ time instant
\item[$D_i\mathsf f(\cdot)$] Partial derivative of a function $\mathsf f$ with respect to its $i^\text{th}$ argument 
\end{IEEEdescription}

\section{Introduction}
One of the major challenges in robotics is having autonomous systems behave efficiently under the presence of uncertainty. This can be mathematically formulated as optimizing certain performance criteria, which are relevant to the task under consideration and the system itself. Such problems have been addressed in the optimal control discipline. Broadly speaking, optimal control frameworks can be classified as model-free, or model-based methods. As its name implies, a model-free approach can be applied without having any information about the system dynamics. This fact has allowed reinforcement learning methods to be successfully applied in numerous robotic tasks (e.g., \cite{rl,pisq,Freek2012_SequenceManipulation,Buchli_IJRR_2011,PetersSchaal2008,KoberPeters2008,PetersThesis}) and to bypass the issue of model uncertainty. However, they suffer from rather slow convergence and require executing many rollouts on the physical system. On the other hand, model-based methods rely on a mathematical representation of the considered system (see for example \cite{ddp,ilqr,Lantoine2012,Lin1991_Part1,Lin1991_Part2}), resulting in much faster algorithms. Nevertheless, their effectiveness is largely dependent on modeling accuracy. To this end, a number of stochastic optimal control algorithms has been proposed in the literature that handle this shortcoming through various ways of uncertainty representation.

One class of model-based optimal control methods represents uncertainty by employing stochastic differential equations. %Unfortunately, most of the existing approaches rely on strong assumptions regarding the system representation and type of stochasticity, decreasing thus their applicability.
Some of the most popular approaches that can deal with non-linear dynamics, include iterative linearization methods such as iterative Linear Quadratic Gaussian (iLQG) control \cite{ilqg} and Stochastic Differential Dynamic Programming (SDDP) \cite{sddp}, as well as sampling methods like the Path Integral control (PI) \cite{pathinteg}. Unfortunately, the aforementioned works rely on certain strong assumptions that may reduce their applicability. The SDDP and iLQG methodologies assume that the underlying dynamics have the form of an ordinary differential equation driven by white Gaussian noise with additive or multiplicative amplitude. Note that this way of modeling cannot capture stochasticity directly in the internal parameters, from which many times uncertainty stems from. In addition, continuous-time white noise signals possess infinite energy and, therefore, do not exist in the physical world. The PI methodology further assumes that the system dynamics are affine in control and requires that control authority is proportional to noise intensity.

Recently, novel probabilistic trajectory optimization algorithms \cite{pilco}, \cite{lolpan2014probabilistic} were developed for control of uncertain systems using Gaussian Processes (GP) \cite{gpr}.  Gaussian Processes provide a way to learn and represent unknown functions  by storing all the data collected from the system in consideration and performing inference. Due to the probabilistic nature of GP-regression,   GP-based trajectory optimization methods can be used for learning control on systems with completely unknown dynamics  while  reducing the number of interactions with the physical system during training. This important feature makes the aforementioned methods suitable for applications in which no prior knowledge of the dynamics is available and experiments are expensive. Unfortunately, most  GP-based trajectory optimization methods achieve tractability in the inference phase by treating the state as a Gaussian random variable and relying on moment matching (e.g., in \cite{pilco}). Since the Gaussianity assumption is not usually satisfied for non-linear systems, this may lead to erroneous representations. In addition, it is often the case in robotics that uncertainty arises due to unknown parameters in the dynamics. Thus, exploiting the structure of a physics-based model can provide great efficiency.

One method that has been widely used for representing parametric uncertainty in engineering systems, is Polynomial Chaos theory. Wiener introduced Polynomial Chaos \cite{wiener} and used it to decompose stochastic processes into a convergent series of Hermite polynomials. Xiu and Karniadakis in \cite{xiu} extended this concept and introduced the generalized Polynomial Chaos (gPC) framework. In their work, various continuous and discrete distributions were modeled using orthogonal polynomials from the Askey-scheme and $L^2$ convergence in the corresponding Hilbert functional space was guaranteed. Based on their pioneering work, gPC has been successfully applied in various fields such as solid mechanics \cite{solidm} and fluid dynamics \cite{fluidd}. However, works including applications on autonomous systems and control-related problems are rather limited. The authors in \cite{vdyn} used the gPC scheme to analyze the dynamics of a vehicle under uncertainty. Dutta and Bhattacharya in \cite{dutta} designed nonlinear estimators which proved to be beneficial when measurements are infrequent. Hover and Triantafyllou utilized gPC as a tool to analyze the stability of a stochastic bilinear system \cite{hover}. Moreover, Fisher and Bhattacharya in \cite{fisher} proposed a stochastic version of the LQR controller.

In our work, we consider dynamic systems which are influenced by a set of uncertain internal parameters and initial states. Specifically, we assume that the exact values of these quantities are not known, but we have access to their underlying statistics. We utilize the gPC framework to model this type of uncertainty and transform the, originally, stochastic system into a set of deterministic ordinary differential equations that capture uncertainty evolution over time. This allows us to formulate classes of deterministic optimal control problems which depend directly on the gPC representation of the dynamics and, therefore, have a stochastic flavor. To solve the obtained problems, we propose a novel version of Differential Dynamic Programming (DDP) (\cite{ddp}, \cite{conlddp}) and call the developed algorithm ``gPC-DDP". Our framework is an iterative, scalable method which is able to control the probabilistic behavior of the trajectory in an optimization setting. In addition, we prove that under some mild assumptions, gPC-DDP admits locally quadratic convergence rates for generic problem formulations.

The performance of the proposed framework is further improved by incorporating the concept of Variational Integrators (VI's). VI's are a class of numerical time stepping methods derived from direct discretization of Hamilton's principle \cite{marsden}, \cite{varanim}. They have been shown to outperform standard numerical integration schemes (e.g. Euler differentiation, Runge-Kutta methods) due to their long-term energy preserving properties. Specifically, one major benefit is their insensitivity to both time step and terminal time selection \cite{marsden}, \cite{ima}. This can be extremely useful for robotics applications when a long time horizon is considered and fast online computations have to be made. In addition, their available structured linearization renders them ideal candidates to use when numerically solving optimal control problems \cite{lin}. Inspired by the work in \cite{pasini}, we prove that Polynomial Chaos representations of mechanical systems will obey the laws of Lagrangian mechanics. By providing explicit expressions for the Lagrangian and non-conservative forces (e.g., control inputs, dissipative forces) we are able to develop variational integration schemes and incorporate them in our discrete-time optimal control methodology.

The contribution of this work lies in developing a generic framework for controlling the probabilistic evolution of systems under parametric uncertainties. In contrast to most existing approaches, limiting assumptions on the dynamics structure and type of stochasticity can be avoided in the problem setup. Moreover, our methodology utilizes the benefits of the original Differential Dynamic Programming algorithm towards obtaining tractable solutions for nonlinear, stochastic optimal control problems. We also provide a convergence proof that generalizes prior theoretical work, by considering problems with generic running and terminal cost function terms. Our analysis, therefore, completes past work on the convergence properties of DDP-based algorithms. Last but not least, when Lagrangian systems are considered, we show that Variational Integrators can be employed to achieve superior numerical performance during uncertainty propagation. The aforementioned properties highlight the potential applicability of the proposed approach in robotics problems.

The remaining of this paper is organized as follows. Section \ref{secprel} briefly states the problem we will be dealing with and also includes some preliminaries. Regarding the latter, subsection \ref{secgpc} gives an overview of the Wiener-Askey Polynomial Chaos scheme, while subsection \ref{secvi} provides some basic elements of discrete mechanics and variational integrators. In section \ref{secvigpc}, we discuss Polynomial Chaos representations of dynamic systems, for which we also develop a Variational Integrator. Section \ref{secgpcddp} derives the gPC-DDP framework and highlights its important features. Simulated examples are included in section \ref{secsim} that provide further insight into the behavior of our algorithm. Section \ref{secdisc} discusses extensions that can deal with possible drawbacks of our methodology and section \ref{secconc} is the conclusion. Lastly, the appendices include some technical details as well as the core of gPC-DDP's convergence proof.

\section{Problem formulation and preliminaries}\label{secprel}

\subsection{Problem statement}
We consider dynamic systems of the form
\begin{equation}
\label{dyn1}
\dot{x}(t)=f(x(t),u(t),t;\lambda^p),
\end{equation}
where $x\in\mathbb{R}^n$ is the state vector, $u\in\mathbb{R}^m$ is the control input vector and $\lambda^p$ denotes a set of independent random parameters for the physics-based model \eqref{dyn1}. We also consider the initial state $x^0$ to be a function of a set of independent random variables $\lambda^0$. It is assumed that the statistics of $\lambda^p$ and $\lambda^0$ are known a priori. The goal is to find a control $u(\cdot)$ that solves the following stochastic optimal control problem

\begin{equation}
\label{smxi}
\begin{split}
&\min_u\hspace{1.3mm} \int_{t_0}^{t_f}\mathcal{L}(\mathcal{M}(t),u,t)\rd t+\mathcal{F}(\mathcal{M}(t_f),t_f)\\
\text{s.t.}&\quad\dot{x}(t)=f(x(t),u(t),t;\lambda^p),\quad x^0=x(t_0;\lambda^0),
\end{split}
\end{equation}
where $[t_0,t_f]$ is the time horizon, $\mathcal{L}$ is the running cost and $\mathcal{F}$ is the terminal cost. In addition, $\mathcal{M}=(\mathcal{M}_1,...,\mathcal{M}_n)^\top$ denotes the moments of the state vector $x$, with $\mathcal{M}_i=((\mathcal{M}_i)_1,...,(\mathcal{M}_i)_j,...)$ and $(\mathcal{M}_i)_j$ being the $j^\text{th}$ (central) moment of state $x_i$. Henceforth, we assume that a solution to \eqref{smxi} exists.

In this paper, we will deal with the discrete-time counterpart of \eqref{smxi} by discretizing the corresponding cost functions and dynamic equations. Moreover, in sections \ref{secvigpc}, \ref{secgpcddp}, we will utilize Polynomial Chaos theory to transform the stochastic optimal control problem in \eqref{smxi} (usually referred to as moment-based stochastic control problem; see for example \cite{xu1}, \cite{xu3}) into a purely deterministic one.

Finally, note that solving optimal control problems with continuous-valued states and controls can rarely be done analytically. In addition, when high-dimensional, non-linear systems are considered, searching for global optimality becomes usually intractable. Hence, we seek to develop numerical methods that can produce locally optimal, admissible and tractable solutions, at the expense of sacrificing global optimality.

\subsection{Wiener-Askey Polynomial Chaos}\label{secgpc}
In this subsection we review the basics of generalized Polynomial Chaos theory. More details can be found in \cite{bookxiu}.

Let $(\Omega,\mathcal{F},P)$ be a probability space such that $\Omega$ is the sample space, $\mathcal{F}$ is the $\sigma$-field of $\Omega$ and $P$ is the corresponding probability measure. Let $\omega\in\Omega$ and suppose that $\xi(\omega)=(\xi_1(\omega),...,\xi_d(\omega))\in\mathbb{R}^d$ is a continuous multi-dimensional random variable with mutually independent components. Define $\rho(\xi)$ to be a positive weight function and determine the weighted $L^2_\rho$ space by
\begin{equation}
\label{Lro}
L^2_\rho=\bigg\{\mathsf f:\mathcal{D}\rightarrow\mathbb{R}\bigg\vert\int_\mathcal{D} \mathsf f^2(\xi)\rho(\xi)\rd\xi<\infty\bigg\},
\end{equation}
where $\mathcal{D}$ is the support of random variable $\xi$. In the context of Polynomial Chaos theory, $\rho(\xi)$ is related to the probability density function associated with $P$ (i.e., $\rd P(\xi)=Z\rho(\xi)\rd\xi$, where $Z$ is a scaling factor). Thus, $L^2_\rho$ denotes the space of mean-square integrable functions (i.e., functions with finite second moment). One can then write the Polynomial Chaos expansion of $\mathsf f\in L^2_\rho$ as
\begin{equation}
\label{gpcexpansion}
\mathsf f(\xi)=\sum_{j=0}^\infty \mathsf f_j\phi_j(\xi).
\end{equation}
The set $\{\phi_j(\xi),\hspace{1mm}j\in\mathbb{Z}_{\geq0}\}$ consists of polynomials orthogonal with respect to the weight function $\rho(\xi)$, while $\{\mathsf f_j,\hspace{1mm}j\in\mathbb{Z}_{\geq0}\}$ contains the coefficients of the Polynomial Chaos expansion. It has been shown that the series in \eqref{gpcexpansion} converges to $\mathsf f$ in the $L^2_\rho$ sense \cite{bookxiu}. However, while the requirement for convergence is rather mild ($L_\rho^2$-integrability), the rate of convergence will further depend on the regularity of $\mathsf f$ with respect to $\xi$.

In practice, we will have to truncate the series as
\begin{equation}
\label{gpcexpansion1}
\mathsf f(\xi)\approx\sum_{j=0}^K\mathsf f_j\phi_j(\xi),
\end{equation}
where the number of coefficients is $K=\frac{(r+d)!}{r!d!}-1$, such that $r$ is the maximum order of $\{\phi_j\}$ and $d$ is the dimension of $\xi$. Since independence among random inputs is assumed, when $d>1$, the elements of $\{\phi_j\}$ are defined as products of univariate orthogonal polynomials. Specifically, let $Z\rho(\xi)=Z_1\rho_1(\xi_1)...Z_d\rho_d(\xi_d)$ be the joint probability density function. Then, $\phi_j(\xi)=\tilde{\phi}_{j_1}(\xi_1)...\tilde{\phi}_{j_d}(\xi_d)$, where $\tilde{\phi}_{j_i}$ denotes polynomials of order $j_i$ that are orthogonal with respect to $\rho_i(\xi_i)$ ($j_i\leq r$, $j=0,...,K$ and $i=1,...,d$).

The expansion in \eqref{gpcexpansion1} can be viewed as an orthogonal projection of $\mathsf f$ onto the linear space spanned by $\{\phi_j,\hspace{1mm}j=0,...,K\}$. It has been proven that for any $\mathsf f\in L^2_\rho$, this orthogonal projection constitutes the best polynomial approximation in the $L^2_\rho$ norm \cite{bookxiu}.

To proceed, the $j^\textrm{th}$ coefficient in \eqref{gpcexpansion1} can be obtained by using the orthogonality property of $\{\phi_j\}$. Specifically one has
\begin{equation}
\label{galerkin}
\mathsf f_j=\frac{\int_\mathcal{D}\mathsf f(\xi)\phi_j(\xi)\rho(\xi)\rd\xi}{\int_\mathcal{D}\phi_j^2(\xi)\rho(\xi)\rd\xi}.
\end{equation}
This procedure is called Galerkin projection and virtually requires the residual error of the projection to be orthogonal to the space spanned by $\{\phi_j,\hspace{1mm}j=0,...,K\}$.

The authors in \cite{xiu} established a connection between distributions of random inputs and orthogonal polynomials from the Askey scheme, developing the generalized Polynomial Chaos framework. They showed that when $\rho(\xi)$ is of a certain type, one can naturally select the appropriate set $\{\phi_j\}$ from the Askey scheme that gives orthogonality with respect to $\rho(\xi)$. Table \ref{tab:ask} shows this correspondence and provides a list of such orthogonal polynomials\footnote{$\alpha$, $\beta$ in Table \ref{tab:ask} denote parameters of the corresponding density functions}. Each set $\{\phi_j,\hspace{1mm}j\in\mathbb{Z}_{\geq0}\}$ forms a complete orthogonal basis in the Hilbert space determined by the corresponding weight function. Moreover, when the density function of the quantity to be approximated belongs in Table \ref{tab:ask}, proper selection of the basis functions results in faster convergence to the desired distribution.
\begin{table}[h]
  \centering
  \caption{Correspondence between standard forms of continuous probability distributions and types of continuous orthogonal polynomials from the Askey scheme}
  \label{tab:ask}
  \begin{tabular}{cccc}
    \toprule
    \normalsize Distribution & \normalsize Weight function & \normalsize Polynomials & \normalsize Domain\\
\midrule
Gaussian & $e^{-\xi^2/2}$ &  Hermite &  $(-\infty,\infty)$\\

 Uniform & $1$ &  Legendre &  $[-1,1]$\\

Gamma & $\xi^\alpha e^{-\xi}$ &  Laguerre &  $[0,\infty)$\\

 Beta & $(1-\xi)^\alpha(1+\xi)^\beta$ &  Jacobi &  $[-1,1]$\\
    \bottomrule
  \end{tabular}
\end{table}

In the remaining of this paper, we will often denote the expectation operator with respect to $\rho(\xi)$ as
\begin{equation*}
\begin{split}
\langle\mathsf f\rangle&=\mathbb{E}[\mathsf f]=\int_\mathcal{D}\mathsf f(\xi)\rho(\xi)\rd \xi,\\
\langle\mathsf f,\mathsf g\rangle&=\mathbb{E}[\mathsf f\mathsf g]=\int_\mathcal{D}\mathsf f(\xi)\mathsf g(\xi)\rho(\xi)\rd \xi
\end{split}
\end{equation*}
and so forth. $\langle\mathsf f,\mathsf g\rangle$ can be viewed as the inner product in the Hilbert space of mean-square integrable functions. Also, due to orthogonality we have
\begin{equation*}
\langle\phi_i,\phi_j\rangle=\delta_{ij}\langle\phi_i,\phi_i\rangle,
\end{equation*}
where $\delta_{ij}$ is the Kronecker delta.

Lastly note that Table \ref{tab:ask} provides details for continuous random variables. Analogously, if discrete random variables are considered, certain orthogonal polynomials from the Askey scheme can be used \cite{bookxiu}. In addition, arbitrary probability distributions can be handled as well \cite{gpca}.

\subsection{Discrete mechanics and Variational Integrators}\label{secvi}
Here, we present fundamental concepts of Lagrangian mechanics in the continuous and discrete time domain. This involves discussing the Pontryagin-d'Alembert principle, as well as the Discrete Euler-Lagrange equations. These elements will be used in subsequent sections to develop a Variational Integrator for Polynomial Chaos representations of dynamical systems. We will consider the case where non-conservative forces are applied since it is more relevant to robotics applications. The reader is referred to \cite{marsden}, \cite{varanim} for more details.

\textbf{The continuous Lagrange-d'Alembert principle} Consider a finite-dimensional dynamical system and let $q=(q_1,...,q_N)^\top\in\mathbb{R}^N$ denote its generalized position coordinates and $L(q,\dot{q})$ its Lagrangian. %In the case of basic elasticity, we define the Lagrangian as follows
%\begin{equation*}
%L(q,\dot{q})=K(\dot{q})-W(q),
%\end{equation*}
%where $K$ and $W$ denote the kinetic and potential energy respectively.
Under the influence of non-conservative forces $F=(F_1,...,F_N)^\top\in\mathbb{R}^N$, the Lagrange-d'Alembert principle states that over a time horizon $t\in[t_0,t_f]$ the following holds
\begin{equation}
\label{dale}
\delta\int_{t_0}^{t_f}L(q,\dot{q})\rd t+\int_{t_0}^{t_f}F(q,\dot{q},u)\delta q\rd t=0.
\end{equation}
Based on \eqref{dale}, the evolution of the system is described in continuous space by the {\it Euler-Lagrange} equations \cite{marsden}
\begin{equation}
\label{lagr}
\frac{\rd}{\rd t}\frac{\partial L}{\partial\dot{q_i}}-\frac{\partial L}{\partial q_i}=F_i,\quad i=1,...,N.
\end{equation}

\textbf{The continuous Pontryagin-d'Alembert principle} To proceed, let  $v=(v_1,...,v_N)^\top\in\mathbb{R}^N$ and $p=(p_1,...,p_N)^\top\in\mathbb{R}^N$ denote the generalized velocity and momentum coordinates respectively. The Pontryagin-d'Alembert principle connects the Lagrangian and Hamiltonian point of views and states that the equations of motion must satisfy
\begin{equation}
\label{ggg9}
\delta\int_{t_0}^{t_f}\big(L(q,v)+p(\dot{q}-v)\big)\rd t+\int_{t_0}^{t_f}F(q,\dot{q},u)\delta q\rd t=0.
\end{equation}
By treating $p$ as a Lagrange multiplier to enforce equality between $v$ and $\dot{q}$, one can immediately see the similarity between \eqref{ggg9} and \eqref{dale}. Manipulation of \eqref{ggg9} yields \cite{varanim}
\begin{equation}
\label{ham}
v_i=\dot{q}_i,\quad p_i=\frac{\partial L}{\partial v_i},\quad\dot{p}_i=\frac{\partial L}{\partial q_i}+F_i,\quad i=1,...,N.
\end{equation}
Eqs. \eqref{ham} are equivalent to \eqref{lagr} and will be used in section \ref{secvigpc} to build the Lagrangian function and non-conservative forces of Polynomial Chaos-based mechanical systems.

\textbf{Discrete Lagrangian Mechanics} Marsden and West in \cite{marsden} showed how to properly calculate the discrete version of \eqref{lagr} and obtain a Variational Integrator (VI). Towards that goal, the discrete Lagrangian $L_d$ is defined by
\begin{equation}
\label{lagrdisc}
L_d(q^k,q^{k+1})\simeq\int_{t_k}^{t_{k+1}} L(q(s),\dot{q}(s))\rd s,
\end{equation}
where $q^k$ denotes the discrete configuration of the system at instant $t_k$ (i.e., $q^k=q(t_k)$). $L_d$ can be estimated via a quadrature rule as
\begin{equation}
\label{lddd}
L_d(q^k,q^{k+1})=L((1-\zeta)q^k+\zeta q^{k+1},\frac{q^{k+1}-q^k}{\Delta t}) \Delta t,
\end{equation}
such that $\Delta t$ is the time step and $\zeta\in[0,1]$. The quadrature is said to be second-order accurate if $\zeta=1/2$. In a similar manner, the continuous non-conservative forces are approximated by left and right discrete forces as follows

\begin{equation}
\begin{split}
\label{fdisc}
&F_d^-(q^k,q^{k+1},u^k)\delta q^k+F_d^+(q^k,q^{k+1},u^k)\delta q^{k+1}\approx\\
&\int_{t_k}^{t_{k+1}}F(q(s),\dot{q}(s),u(s))\delta q \rd s,
\end{split}
\end{equation}
where $u^k$ is the discretization of continuous control inputs. One can typically choose \cite{lin}
\begin{equation}
\label{fddd}
\begin{array}{lcl}
F_d^-(q^k,q^{k+1},u^k)&=&F(\frac{q^k+q^{k+1}}{2},\frac{q^{k+1}-q^k}{\Delta t},u^k) \Delta t,\\
&&\\
F_d^+(q^k,q^{k+1},u^k)&=&0.
\end{array}
\end{equation}
By plugging \eqref{lagrdisc} -- \eqref{fddd} in \eqref{dale}, the {\it Discrete Euler-Lagrange} equations (DEL) can be written as follows
\begin{equation}
\label{del}
\begin{array}{rcl}
0&=&p^k+D_1L_d(q^k,q^{k+1})+F_d^-(q^k,q^{k+1},u^k),\\
&&\\
p^{k+1}&=&D_2L_d(q^k,q^{k+1})+F_d^+(q^k,q^{k+1},u^k).
\end{array}  
\end{equation}
In the expressions above, $D_iL_d(\cdot)$ denotes the partial derivative of $L_d$ with respect to its $i^{\text{th}}$ argument, while $p^k$ can be viewed as the discrete momentum at time $t_k$. The DEL equations can now be used to propagate the states of the system forward in time. Specifically, given the current state, $(q^k,p^k)$, the next state is determined by solving \eqref{del}. 

%Let us assume that the current state $(q^k,p^k)$ is given. In order to propagate the dynamics, the first equation in \eqref{del} has to be solved implicitly for $q^{k+1}$ (e.g., by using the Newton-Raphson method). Then, the obtained configuration is plugged into the second DEL equation to yield $p^{k+1}$. The procedure is repeated until the terminal time instant.

It has been shown that variational integration methods outperform standard integration schemes (e.g., Euler integration, Runge Kutta, etc) \cite{marsden} that discretize the equations of motion instead of the Lagrangian function. Specifically, by propagating the DEL equations, we expect to get a behavior similar to the continuous dynamic equations. In fact, VI's exhibit a structure preserving nature, as well as improved performance in terms of accuracy and energy stability.

\section{Polynomial Chaos representations of dynamical systems}\label{secvigpc}
This section uses Polynomial Chaos theory to derive expressions for the dynamics of stochastic systems. First, the evolution of the Polynomial Chaos expansion coefficients is described. The time-varying coefficients capture the probabilistic evolution of the system as it is influenced by its internal dynamics and external inputs. Next, we develop a Variational Integrator that allows us to propagate the discrete-time gPC formulation efficiently.

\subsection{Evolution of Polynomial Chaos-based dynamical systems}
Consider the case where a dynamical system is influenced by a set of uncertain internal parameters. We represent these parameters by a collection of independent random variables $\lambda^p\in\mathbb{R}^{d_p}$ with known distribution\footnote{We use superscripts $p$ %, $u$
and $0$ to associate random variables $\lambda$ and $\xi$ with uncertainty in the parameters %, controls
and initial states respectively. Not to be confused with the notation $\cdot^k$ that indicates evaluation of the term $\cdot$ at $t_k$ ($k=0,1,...$).}. Similarly, the initial state will be stochastic and will depend on the independent random variables $\lambda^0\in\mathbb{R}^{d_0}$. The dynamics equations are
\begin{equation}
\label{f1}
\dot{x}(t)=f(x(t),u(t),t;\lambda^p),\quad x^0=x(t_0;\lambda^0).
\end{equation}
In this paper, the controls are considered to be deterministic. We also assume that all random quantities belong in their associated $L_\rho^2$ spaces (see eq. \eqref{Lro}).

The first step is to take the gPC expansion for the random parameters $\lambda^p$, $x^0$ with respect to a set of standard random variables $\xi^p$, $\xi^0$. In this way, the density function of the new random variables accords with the weight functions presented in Table \ref{tab:ask}. Hence, we have
\begin{equation}
\label{gpcl}
\lambda_i^p\approx\sum_{j=0}^{\Lambda}\lambda_{ij}^p\phi^p_j(\xi^p),\quad i=1,...,d_p.
\end{equation}
%It may also be the case that we have uncertainty in the controls (e.g., when there is stochasticity in the actuator dynamics). We suppose that this form of stochasticity can be expressed through a set of independent random variables $\xi^u=(\xi_1^u,...,\xi_m^u)^\top\in\mathbb{R}^{d_u}$ with known probability distribution. Then, we can expand each control input as
%\begin{equation}
%\label{gpcu}
%u_i(t,\xi_i^u)\approx\sum_{j=0}^{K_u} u_{ij}(t)\phi^u_j(\xi^u_i),\quad i=1,...,m.
%\end{equation}
%Similarly, we can consider uncertainty in the initial states. Assuming influence by a set of independent random variables $\xi^0=(\xi_1^0,...,\xi_n^0)^\top\in\mathbb{R}^{d_0}$ with known probability distribution, we have
By using a similar expression for $\lambda^0$, the initial states are expanded as
\begin{equation}
\label{gpcx0}
x_i(t_0)\approx\sum_{j=0}^{K_0}x_{ij}(t_0)\phi^0_j(\xi^0),\quad i=1,...,n.
\end{equation}
Now define the concatenated vector of random inputs as $\xi=(\xi^p,%\xi_1^u,...,\xi_m^u,
\xi^0)^\top\in\mathbb{R}^d$, where $d=d_p%+d_u
+d_0$. Since the evolution of $x$ depends on $\xi$, the associated gPC expansion of the state vector is defined as
\begin{equation}
\label{gpcx}
x_i(t)\approx\sum_{j=0}^K x_{ij}(t)\phi_j(\xi),\quad i=1,...,n.
\end{equation}
Differentiating \eqref{gpcx} gives
\begin{equation}
\label{gpcxdot}
\dot{x}_i(t)\approx\sum_{j=0}^K \dot{x}_{ij}(t)\phi_j(\xi).
\end{equation}
Note that each set of polynomials $\phi$ in \eqref{gpcl} -- \eqref{gpcx} has to be orthogonal with respect to the corresponding weight functions. After plugging eqs. \eqref{gpcl} to \eqref{gpcxdot} in \eqref{f1}, Galerkin projection can be performed as shown in \eqref{galerkin}. One can obtain
\begin{equation}
\label{ccc}
\dot{x}_{ij}(t)=\frac{\int_\mathcal{D} f_i(x(t,\xi),u(t),t,\xi)\phi_j(\xi)\rho(\xi)\rd\xi}{\langle\phi_j,\phi_j\rangle},%\dot{x}_{ij}(t)=\frac{\int_\mathcal{D} f_i(x(t,\xi),u(t,\xi),t,\xi)\phi_j(\xi)\rho(\xi)\rd\xi}{\langle\phi_j,\phi_j\rangle},
\end{equation}
where $\rho(\cdot)$, $\mathcal{D}$ denote the probability density function and domain of $\xi$, respectively. The evolution of the gPC coefficients can thus be compactly written as
\begin{equation}
\label{gpcX}
\dot{\bold{X}}(t)=\bold f(\bold{X}(t),u(t),t),%\dot{\bold{X}}(t)=\bold f(\bold{X}(t),\bold{U}(t),t),
\end{equation}
where $\bold{X}=(x_{10},...,x_{1K},...,x_{ij},...,x_{nK})^\top\in\mathbb{R}^{n(K+1)}$ %, $ \bold{U}=(u_{10},...,u_{1K_u},...,u_{ij},...,u_{mK_u})^\top\in\mathbb{R}^{m(K_u+1)}$
and the elements of $\bold f$ are given by \eqref{ccc}.

A major benefit of gPC theory is that the moments of the expanded stochastic process can be estimated analytically \cite{bookxiu}. More precisely, by using the orthogonality of $\{\phi_j\}$ and recalling that $\phi_0(\xi)=1$ for all types of polynomials in Table \ref{tab:ask}, one obtains% \cite{bookxiu}
\begin{flalign}
\label{gggg_exp}
\mathbb{E}[x_i(t)]\approx&(\hat{\mathcal{M}}_i)_1=\nonumber\\
&\int_\mathcal{D}\sum_{j=0}^K x_{ij}(t)\phi_j(\xi)\rho(\xi)\rd\xi=x_{i0}(t),&
\end{flalign}
\begin{flalign}
\label{gggg_var}
\text{var}[x_i(t)]\approx&(\hat{\mathcal{M}}_i)_2=\nonumber\\
\int_\mathcal{D}\big(&\sum_{j=0}^K x_{ij}(t)\phi_j(\xi)\big)\big(\sum_{l=0}^K x_{il}(t)\phi_l(\xi)\big)\rho(\xi)\rd\xi\nonumber\\
&\quad-\bigg(\int_\mathcal{D}\sum_{j=0}^K x_{ij}(t)\phi_j(\xi)\rho(\xi)\rd\xi\bigg)^2=\nonumber\\
&\sum_{j=1}^Kx_{ij}^2(t)\langle \phi_j,\phi_j\rangle,&
\end{flalign}
\begin{flalign}
\label{gggg_skew}
&\text{skew}[x_i(t)]\approx(\hat{\mathcal{M}}_i)_3=\nonumber\\
&\sum_{j=0}^K\sum_{g=0}^Kx_{ij}(t)x_{ig}(t)\big(\sum_{l=0}^Kx_{il}(t)\langle\phi_j,\phi_g,\phi_l\rangle-3x_{i0}(t)\big)+2x_{i0}^3(t),\\
&\quad\vdots\nonumber&
\end{flalign}
\begin{flalign}
\label{Mij}
&(\mathcal{M}_i(x))_j\approx(\hat{\mathcal{M}}_i(\textbf X))_j,&
\end{flalign}
where we have used $(\hat{\mathcal{M}}_i)_j$ to denote the gPC estimate of the $j^\text{th}$ (central) moment of state $x_i$. Remarkably, propagating the set of deterministic ordinary differential equations in \eqref{gpcX} results in obtaining an estimate of the state distribution over time. This further implies that the probabilistic evolution of the system can be influenced by controlling \eqref{gpcX}.%The expressions above suggest that controlling the distribution of a stochastic system can be done through controlling directly the deterministic dynamic equations in \eqref{gpcX}. Therefore, stochastic optimal control can be implemented via a deterministic method.
\begin{remark}
	As explained in section \ref{secgpc}, Polynomial Chaos expansions constitute a form of orthogonal projection. When the quantity to be approximated belongs in $L^2_\rho$, convergence to its true value is guaranteed \cite{xiu}. Hence, the states in \eqref{gpcX} satisfy
	\begin{equation*}
	||x_i(\xi)-\sum_{j=0}^Kx_{ij}\phi_j(\xi)||_\rho\rightarrow0,\quad as\quad K\rightarrow\infty,
	\end{equation*}
	with $||\cdot||_\rho$ being the corresponding norm in $L^2_\rho$.
\end{remark}

\subsection{Disrete Euler-Lagrange equations for gPC expasions of mechanical systems}
Designing faithful discrete representations for continuous equations of motion constitutes a key ingredient in discrete-time optimal control methods. In our case, one could discretize eq. \eqref{gpcX} directly. However, as explained in section \ref{secvi}, such a naive approach usually induces numerical errors during simulation.

Suppose a mechanical system satisfies eqs. \eqref{ham} and is influenced by uncertainty in the parameters or initial states. Then, we can take the gPC expansion of $q$, $v$ and $p$ with respect to the random inputs $\xi\in\mathbb{R}^d$ as
\begin{equation}
\begin{split}
\label{gpchamil}
q_i(t,\xi)&\approx\sum_{j=0}^K q_{ij}(t)\phi_j(\xi),\quad v_i(t,\xi)\approx\sum_{j=0}^K v_{ij}(t)\phi_j(\xi)\\
p_i(t,\xi)&\approx\sum_{j=0}^K p_{ij}(t)\phi_j(\xi),
\end{split}
\end{equation}
where $i=1,...,N$. Next, define the concatenated vectors $\bold{Q}=(q_{10},...,q_{1K},...,q_{NK})\in\mathbb{R}^{N(K+1)}$ and $\bold{V}=(v_{10},...,v_{1K},...,v_{NK})\in\mathbb{R}^{N(K+1)}$. Let also $\bold{\hat{P}}=(\hat p_{10},...,\hat p_{1K},...,\hat{p}_{NK})\in\mathbb{R}^{N(K+1)}$ be the set of unnormalized momentum coefficients with $\{\hat{p}_{ij}\}=\{p_{ij}\langle\phi_j,\phi_j\rangle\}$.
The authors in \cite{pasini} showed that for uncertain conservative systems, the coefficients of the gPC expansions satisfy Hamilton's equations. Here, we extend this concept to show how the DEL equations can be transformed, especially when non-conservative forces are applied. Consequently, a variational integration scheme is designed that can be used for propagating the density of stochastic mechanical systems.
\begin{lemma}
\label{theoremgpcdel}
	Consider a mechanical system with uncertain parameters and let $L$, $F$ denote the Lagrangian function and set of non-conservative forces respectively. Suppose also that its position, velocity and momentum coordinates can be expanded as in \eqref{gpchamil}. Then, the system of the gPC coefficients $(\emph{\textbf Q},\emph{\textbf V},\hat{\emph{\textbf P}})$ will satisfy eqs. \eqref{ham} with
	
	\begin{equation}
	\label{Lhat}
	\hat{L}(\bold{Q},\bold{V})=\int_\mathcal{D}L\rho(\xi)\rd\xi
	\end{equation}
	being the associated  Lagrangian function and
	
	\begin{equation}
	\label{gpcf}
	\begin{split}
	&\bold{\hat{F}}=(\hat{F}_{10},...,\hat{F}_{1K},...,\hat{F}_{NK})\in\mathbb{R}^{N(K+1)},\\
	&\hat{F}_{ij}(\bold{Q},\bold{V},u)=\int_\mathcal{D}F_i\phi_j(\xi)\rho(\xi)\rd\xi
	\end{split}
	\end{equation}
	being the non-conservative forces.
\end{lemma}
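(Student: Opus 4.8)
The plan is to obtain the three relations in \eqref{ham} for the coefficient system directly from the Pontryagin--d'Alembert principle \eqref{ggg9}, by taking its $\rho$-weighted average over $\mathcal{D}$ and restricting the admissible paths to the truncated gPC form \eqref{gpchamil}. For (almost) every realization of $\xi$ the parameters $\lambda^p$ and the initial state take definite values, so the underlying object is an ordinary forced Lagrangian system and \eqref{ggg9} holds pathwise; multiplying by $\rho(\xi)$ and integrating over $\mathcal{D}$ yields
\begin{equation*}
\delta\!\int_{t_0}^{t_f}\!\!\int_{\mathcal D}\!\bigl(L(q,v)+p(\dot q-v)\bigr)\rho(\xi)\,\rd\xi\,\rd t+\int_{t_0}^{t_f}\!\!\int_{\mathcal D}\!F(q,\dot q,u)\,\delta q\,\rho(\xi)\,\rd\xi\,\rd t=0 .
\end{equation*}
First I would substitute \eqref{gpchamil}, noting that the variation acts only on the coefficients, so $\delta q_i=\sum_{j=0}^{K}\delta q_{ij}\phi_j(\xi)$. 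Using $\langle\phi_j,\phi_l\rangle=\delta_{jl}\langle\phi_j,\phi_j\rangle$ and the unnormalized momenta $\hat p_{ij}=p_{ij}\langle\phi_j,\phi_j\rangle$, the cross term collapses to $\sum_{i,j}\hat p_{ij}(\dot q_{ij}-v_{ij})$, the Lagrangian term becomes $\hat L(\mathbf{Q},\mathbf{V})$ by \eqref{Lhat}, and the force term becomes $\sum_{i,j}\hat F_{ij}(\mathbf{Q},\mathbf{V},u)\,\delta q_{ij}$ by \eqref{gpcf}.

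The identity that results is exactly \eqref{ggg9} written for a mechanical system whose generalized positions, velocities and momenta are the stacked coefficients $\{q_{ij}\}$, $\{v_{ij}\}$, $\{\hat p_{ij}\}$, with Lagrangian $\hat L$ and non-conservative forces $\{\hat F_{ij}\}$. Since $\{\phi_j\}_{j=0}^{K}$ is linearly independent, the coefficient variations $\delta q_{ij}$ are free and vanish at $t_0,t_f$, so repeating verbatim the manipulation that led from \eqref{ggg9} to \eqref{ham} gives $v_{ij}=\dot q_{ij}$, $\hat p_{ij}=\partial\hat L/\partial v_{ij}$ and $\dot{\hat p}_{ij}=\partial\hat L/\partial q_{ij}+\hat F_{ij}$, which is the assertion; note that the \emph{unnormalized} momenta are precisely what make the second relation come out without a spurious $\langle\phi_j,\phi_j\rangle$ factor. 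Equivalently, this restricted variational principle is just the Galerkin projection of \eqref{ham} (cf. \eqref{ccc}), so the lemma says that Galerkin projection intertwines the variational structures of $(L,F)$ and $(\hat L,\hat F)$.

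As an independent check --- and an alternative proof --- one can project \eqref{ham} coordinate-wise onto $\phi_j$ in the $\langle\cdot,\cdot\rangle$ inner product: $v_i=\dot q_i$ gives $v_{ij}=\dot q_{ij}$; $p_i=\partial L/\partial v_i$ gives $\hat p_{ij}=\langle\partial L/\partial v_i,\phi_j\rangle$; and $\dot p_i=\partial L/\partial q_i+F_i$ gives $\dot{\hat p}_{ij}=\langle\partial L/\partial q_i,\phi_j\rangle+\hat F_{ij}$. The one step that requires care --- and the main obstacle I anticipate --- is identifying the projected partials with partials of $\hat L$, i.e. $\langle\partial L/\partial q_i,\phi_j\rangle=\partial\hat L/\partial q_{ij}$ and $\langle\partial L/\partial v_i,\phi_j\rangle=\partial\hat L/\partial v_{ij}$, where $\hat L(\mathbf{Q},\mathbf{V})=\int_{\mathcal D}L\bigl(q(\xi),v(\xi)\bigr)\rho(\xi)\,\rd\xi$ with $q_i(\xi)=\sum_l q_{il}\phi_l(\xi)$, $v_i(\xi)=\sum_l v_{il}\phi_l(\xi)$. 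This amounts to differentiating under the integral sign (the same interchange implicit in computing $\delta\hat L$ above), which is legitimate once $L$ is assumed $C^1$ in $(q,v)$ with partials locally dominated in $L^1_\rho$ --- mild regularity hypotheses I would state at the outset. Everything else reduces to bookkeeping with the orthogonality of $\{\phi_j\}$.
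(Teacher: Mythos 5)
Your proposal is correct, but your primary argument takes a genuinely different route from the paper's. The paper works at the level of the equations of motion: it substitutes \eqref{gpchamil} directly into \eqref{ham}, Galerkin-projects each equation onto $\phi_j$, and then identifies the projected partials $\langle\partial L/\partial v_i,\phi_j\rangle$ and $\langle\partial L/\partial q_i,\phi_j\rangle$ with $\partial\hat L/\partial v_{ij}$ and $\partial\hat L/\partial q_{ij}$ via the chain rule --- which is exactly your ``independent check,'' so that part of your write-up coincides with the paper's proof, including the one delicate step (differentiation under the integral sign) that the paper performs without comment. Your lead argument instead works at the level of the action: averaging the Pontryagin--d'Alembert principle \eqref{ggg9} over $\rho(\xi)\,\rd\xi$, restricting variations to the truncated span of $\{\phi_j\}$, and observing that orthogonality collapses the constraint term to $\sum_{i,j}\hat p_{ij}(\dot q_{ij}-v_{ij})$ and the force term to $\sum_{i,j}\hat F_{ij}\,\delta q_{ij}$, after which the finite-dimensional system inherits \eqref{ham} verbatim. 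The bookkeeping there is right (in particular your remark that the unnormalized momenta $\hat p_{ij}=p_{ij}\langle\phi_j,\phi_j\rangle$ are what absorb the $\langle\phi_j,\phi_j\rangle$ factors), and this route buys something the paper's does not make explicit: it shows that Galerkin projection and the variational structure commute, which is really the conceptual content of the lemma and the reason the discrete Euler--Lagrange equations \eqref{delg1}--\eqref{delg2} can subsequently be built for the coefficient system. The paper's route is shorter and needs only the chain-rule identity; yours is slightly longer but explains \emph{why} the projected system is Lagrangian rather than verifying it equation by equation. Both arguments share the same implicit idealization --- treating the truncated expansion \eqref{gpchamil} as exact when projecting --- and your added regularity hypotheses ($L$ of class $C^1$ with partials dominated in $L^1_\rho$) are a welcome tightening that the paper omits.
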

\begin{proof}
	For Lagrangian systems, eqs. \eqref{ham} must be satisfied. Plugging \eqref{gpchamil} in \eqref{ham} and performing Galerkin projection gives
	
	\begin{equation}
	\label{gpcham1}
	v_{ij}=\dot{q}_{ij},
	\end{equation}
	\begin{equation}
	\label{gpcham3}
	\hat p_{ij}=\int_\mathcal{D}\frac{\partial L}{\partial v_i}\phi_j(\xi)\rho(\xi)\rd\xi,
	\end{equation}
	\begin{equation}
	\label{gpcham4}
	\dot{\hat p}_{ij}=\int_\mathcal{D}\frac{\partial L}{\partial q_i}\phi_j(\xi)\rho(\xi)\rd\xi+\int_\mathcal{D} F_i\phi_j(\xi)\rho(\xi)\rd\xi,
	\end{equation}
	for $i=1,...,N$ and $j=0,...,K$. Now define the functions $\hat{L}$, $\hat{F}_{ij}$ as in \eqref{Lhat}, \eqref{gpcf} respectively. Since the gPC coefficients are deterministic, one can obtain
	
	\begin{equation}
	\label{th1proof_eq}
	\frac{\partial\hat{L}}{\partial v_{ij}}=\int_\mathcal{D}\frac{\partial L}{\partial v_i}\frac{\partial v_i}{\partial v_{ij}}\rho(\xi)\rd\xi=\int_\mathcal{D}\frac{\partial L}{\partial v_i}\phi_j(\xi)\rho(\xi)\rd\xi,
	\end{equation}
	where the first equality is due to chain rule and the second equality is due to \eqref{gpchamil}. Eqs. \eqref{gpcham3} and \eqref{th1proof_eq} yield
	\begin{equation}
	\label{ggg1}
	\hat p_{ij}=\frac{\partial\hat{L}}{\partial v_{ij}}.
	\end{equation}
	In a similar manner, one can show that
	
	\begin{equation}
	\label{th1proof_eq2}
	\frac{\partial\hat{L}}{\partial q_{ij}}=\int_\mathcal{D}\frac{\partial L}{\partial q_i}\frac{\partial q_i}{\partial q_{ij}}\rho(\xi)\rd\xi=\int_\mathcal{D}\frac{\partial L}{\partial q_i}\phi_j(\xi)\rho(\xi)\rd\xi.
	\end{equation}
	Combining \eqref{gpcf}, \eqref{gpcham4} and \eqref{th1proof_eq2} gives
	
	\begin{equation}
	\label{ggg7}
	\dot{\hat p}_{ij}=\frac{\partial\hat{L}}{\partial q_{ij}}+\hat{F}_{ij}.
	\end{equation}
	For clarity, we put eqs. \eqref{gpcham1}, \eqref{ggg1} and \eqref{ggg7} together to get
	
	\begin{equation}
	\label{gpchampon}
	v_{ij}=\dot{q}_{ij},\quad\hat p_{ij}=\frac{\partial\hat{L}}{\partial v_{ij}},\quad\dot{\hat p}_{ij}=\frac{\partial\hat{L}}{\partial q_{ij}}+\hat{F}_{ij}.
	\end{equation}
	By comparing eqs. \eqref{gpchampon} with \eqref{ham}, the conclusion is made.
\end{proof}

Lemma \ref{theoremgpcdel} implies that the set of gPC-based coordinates in \eqref{gpchamil} behaves as a Lagrangian system. In this context, $\hat{L}$ is the Lagrangian, $\bold{\hat{F}}$ are the non-conservative forces and $\bold{Q}$, $\bold{V}$, $\bold{\hat P}$ denote the position, velocity and (unnormalized) momentum coordinates respectively. Since principles \eqref{dale} and \eqref{ggg9} are equivalent, the corresponding {\it Euler-Lagrange} equations will be satisfied in the continuous domain. As a consequence, one can define the DEL equations for the gPC representation of a mechanical system as
\begin{equation}
\label{delg1}
0=\hat p_{ij}^k+\displaystyle{\frac{\partial}{\partial q_{ij}^k}}\hat{L}_d(\bold{Q}^k,\bold{Q}^{k+1})+\hat{F}_{dij}^-(\bold{Q}^k,\bold{Q}^{k+1},u^k),
\end{equation}

\begin{equation}
\label{delg2}
\hat p_{ij}^{k+1}=\displaystyle{\frac{\partial}{\partial q_{ij}^{k+1}}}\hat{L}_d(\bold{Q}^k,\bold{Q}^{k+1})+\hat{F}_{dij}^+(\bold{Q}^k,\bold{Q}^{k+1},u^k),
\end{equation}
where $i=1,...,N$ and $j=0,...,K$. The discrete versions of the Lagrangian $\hat{L}_d$ and non-conservative forces $\hat{F}_{dij}^\pm$ may be computed as in \eqref{lddd}, \eqref{fddd} respectively. The required steps for propagating the discrete gPC representation of dynamical systems are presented in Algorithm \ref{algdel}.

\begin{algorithm}[h]
	\SetAlgoLined
	\KwData{Lagrangian $L$, non-conservative forces $F$, horizon $K_f$, $(q^0,p^0)$;}
	Determine $\{(q^0_{ij},\hat p^0_{ij})\}$ by performing Galerkin projection on \eqref{gpchamil};\\
	Compute $\hat{L}_d$, $\{\hat{F}_{dij}^\pm\}$ by discretizing eqs. \eqref{Lhat}, \eqref{gpcf} respectively (e.g., by using \eqref{lddd}, \eqref{fddd});\\
	\For{k=0:$K_f$-1}{
		Given $\{(q^k_{ij},\hat p^k_{ij})\}$, compute $\{q^{k+1}_{ij}\}$ by implicitly solving \eqref{delg1};\\
Get $\{\hat p^{k+1}_{ij}\}$ by directly solving \eqref{delg2};
}
	\caption{Propagation of DEL equations for gPC representations of mechanical systems}
	\label{algdel}
\end{algorithm}

\section{Optimal control of Polynomial Chaos-based dynamical systems}\label{secgpcddp}
In this section, we develop a numerical scheme for obtaining (locally) optimal trajectories for systems with parametric uncertainties. Our approach constitutes an extension of the original Differential Dynamic Programming (DDP) algorithm. DDP's main features are its fast convergence rates and scalability, attained by sacrificing global optimality \cite{ddp}. The aforementioned advantages have allowed researchers to apply this methodology on non-linear, high-dimensional systems \cite{conlddp}, \cite{recd}.

We begin by formulating the gPC analogue of the moment-based stochastic optimal control problem in \eqref{smxi}. Moreover, we show that expected costs with quadratic terms (usually found in standard stochastic optimal control theory) can be viewed as a subclass of our gPC formulation. We then develop our trajectory-optimization framework, gPC-DDP, for solving the obtained type of problems. Certain modifications are also provided in order to incorporate the Variational Integrator developed in section IV. Lastly, we prove that under some mild conditions, gPC-DDP converges globally to a stationary solution, with the convergence rate being locally quadratic.

\subsection{gPC formulation of stochastic optimal control problems under parametric uncertainties}
Let $x\in\mathbb{R}^n$, $u\in\mathbb{R}^m$ denote the state and control input vectors respectively. Consider the stochastic optimal control problem in \eqref{smxi}, which is restated below for convenience.

\begin{equation}
\label{smxi0134}
\begin{split}
&\min_u\hspace{1.3mm} \int_{t_0}^{t_f}\mathcal{L}(\mathcal{M}(t),u,t)\rd t+\mathcal{F}(\mathcal{M}(t_f),t_f)\\
\text{s.t.}&\quad\dot{x}(t)=f(x(t),u(t),t;\lambda^p),\quad x^0=x(t_0;\lambda^0).
\end{split}
\end{equation}
Based on the analysis of section \ref{secvigpc}, eqs. \eqref{gpcX} -- \eqref{Mij} allow us to transform \eqref{smxi0134} into the following deterministic optimal control problem

\begin{equation}
\label{gpcstochoptprob1}
\begin{split}
&\min_u\hspace{1.7mm}\int_{t_0}^{t_f} \bm{L}(\textbf{X},u,t)\rd t+\bm{F}(\textbf{X}(t_f),t_f)\\
&\text{s.t.}\quad\dot{\bold{X}}(t)=\bold f(\bold{X},u,t),\quad \bold{X}^0 = \overline{\bold{X}}(t_0),
\end{split}
\end{equation}
with $\bm{L}(\textbf{X},u,t)=\mathcal{L}(\hat{\mathcal{M}}(\textbf X(t)),u,t)$ and $\bm{F}(\textbf{X}(t_f))=\mathcal{F}(\hat{\mathcal{M}}(\textbf X(t_f)),t_f)$. The new state vector will include the gPC coefficients of $x$ (i.e., $\bold{X}=(x_{10},...,x_{1K},...,x_{ij},...,x_{nK})^\top\in\mathbb{R}^{n(K+1)}$) and the dynamic constraints are derived as shown in \eqref{gpcX}. In addition, note that since the statistics of $\xi^0$ in \eqref{gpcx0} are assumed to be known, the initial state $\textbf X^0$ will be fixed.
\begin{remark}
	The deterministic optimal control problem in \eqref{gpcstochoptprob1} is an approximation to the stochastic optimal control problem in \eqref{smxi0134}. The two formulations become equivalent if the orthogonal projection of the state in \eqref{gpcx} holds with equality for all time instants.
\end{remark}

Now let us consider the problem of minimizing an expected cost of the following form
\begin{equation}
\label{Jexp}
\min_u\hspace{1.3mm}\mathbb{E}\bigg[\int_{t_0}^{t_f} \mathcal{L}_e(x,u)\rd t+\mathcal{F}_e(x(t_f))\bigg].
\end{equation}
 These types of cost functions are encountered in standard stochastic optimal control theory \cite{bookstochastic}. Let also $\mathcal{L}_e$ and $\mathcal{F}_e$ be quadratic functions defined as
\begin{equation}
\label{runnL}
\mathcal{L}_e(x,u)=\frac{1}{2}\big[(x(t)-x^{goal}(t))^\top S(x(t)-x^{goal}(t))+u^\top(t)Ru(t)\big],
\end{equation}
\begin{equation}
\label{termF}
\mathcal{F}_e(x(t_f))=\frac{1}{2}(x(t_f)-x^{goal}(t_f))^\top S_f(x(t_f)-x^{goal}(t_f)),
\end{equation}
where $S$ and $S_f$ are diagonal, positive semi-definite matrices, while $R$ is a positive definite matrix. Below, we show that this problem can be viewed as a subclass of \eqref{gpcstochoptprob1}. A similar derivation was provided in \cite{fisher}. However, here we also include a desired state $x^{goal}$ at each time instant, which is useful for tracking tasks. In addition, we consider state weighting matrices without cross terms.

\begin{lemma}
	The expected cost in \eqref{Jexp} with quadratic terms as in \eqref{runnL}, \eqref{termF}, can be transformed into the deterministic cost function of \eqref{gpcstochoptprob1} with
	\begin{equation}
		\label{gpcrunnL}
		\bm{L}=\frac{1}{2}(\textbf{X}(t)-\textbf{X}^{goal}(t))^\top \bm{S}(\textbf{X}(t)-\textbf{X}^{goal}(t))+\frac{1}{2}u^\top(t)Ru(t),
		\end{equation}
		\begin{equation}
		\label{gpctermF}
		\bm{F}(\textbf{X}(t_f))=\frac{1}{2}(\textbf{X}(t_f)-\textbf{X}^{goal}(t_f))^\top \bm{S_f}(\textbf{X}(t_f)-\textbf{X}^{goal}(t_f)),
		\end{equation}
		where
		\begin{equation}
		\label{gpc_S}
		\bm{S}=S\otimes \emph{diag}(\langle\phi_0,\phi_0\rangle,\langle\phi_1,\phi_1\rangle,...,\langle\phi_K,\phi_K\rangle),
		\end{equation}
		\begin{equation}
		\label{gpc_Sf}
		\bm{S_f}=S_f\otimes \emph{diag}(\langle\phi_0,\phi_0\rangle,\langle\phi_1,\phi_1\rangle,...,\langle\phi_K,\phi_K\rangle),
		\end{equation}
		\begin{equation}
		\label{gpc_Xg}
		\textbf{X}^{goal}(t_f)=(x_1^{goal}(t_f),\textbf{0}_{1\times K},...,x_n^{goal}(t_f),\textbf{0}_{1\times K})^\top\in\mathbb{R}^{n(K+1)}
		\end{equation}
		and $\otimes$ is the Kronecker product.
\end{lemma}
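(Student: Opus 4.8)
The plan is to compute the two expectations in \eqref{Jexp} directly, by substituting the truncated gPC expansion \eqref{gpcx} for the state and exploiting the orthogonality relation $\langle\phi_k,\phi_l\rangle = \delta_{kl}\langle\phi_k,\phi_k\rangle$. First I would invoke Fubini's theorem — justified by the standing assumption that all random quantities lie in their associated $L^2_\rho$ spaces — to interchange the expectation operator with the time integral, so that it suffices to rewrite the pointwise-in-time quantities $\mathbb{E}[\mathcal{L}_e(x(t),u(t))]$ and $\mathbb{E}[\mathcal{F}_e(x(t_f))]$ in terms of the deterministic coefficients $\textbf{X}(t)$.

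For the running cost, the diagonal structure $S = \mathrm{diag}(s_1,\dots,s_n)$ decouples the state-dependent term as $(x-x^{goal})^\top S (x-x^{goal}) = \sum_{i=1}^n s_i (x_i - x_i^{goal})^2$. I would then write the deterministic target as $x_i^{goal} = x_i^{goal}\phi_0(\xi)$, using $\phi_0 \equiv 1$, so that $x_i - x_i^{goal} = (x_{i0} - x_i^{goal})\phi_0(\xi) + \sum_{j=1}^K x_{ij}\phi_j(\xi)$. Squaring and taking the expectation with respect to $\rho$, all cross terms vanish by orthogonality, leaving $\mathbb{E}[(x_i - x_i^{goal})^2] = (x_{i0}-x_i^{goal})^2\langle\phi_0,\phi_0\rangle + \sum_{j=1}^K x_{ij}^2\langle\phi_j,\phi_j\rangle$. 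Collecting the coefficients $\textbf{X}_i = (x_{i0},\dots,x_{iK})^\top$ and the padded target $\textbf{X}_i^{goal} = (x_i^{goal},\textbf{0}_{1\times K})^\top$, this is precisely $(\textbf{X}_i - \textbf{X}_i^{goal})^\top \Phi (\textbf{X}_i - \textbf{X}_i^{goal})$ with $\Phi = \mathrm{diag}(\langle\phi_0,\phi_0\rangle,\dots,\langle\phi_K,\phi_K\rangle)$. Summing over $i$ with weights $s_i$ and recognizing the resulting block-diagonal matrix, the state term equals $(\textbf{X}-\textbf{X}^{goal})^\top (S\otimes\Phi)(\textbf{X}-\textbf{X}^{goal})$, i.e. the claimed $\bm{S}$ from \eqref{gpc_S}. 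The control term $u^\top R u$ is deterministic and is therefore unaffected by the expectation, which yields \eqref{gpcrunnL}; the terminal cost \eqref{gpctermF} follows by the identical computation with $S_f$ replacing $S$ and the time argument fixed at $t_f$.

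The computation is essentially routine, so there is no genuine obstacle. The only points demanding care are bookkeeping: keeping the two index sets in the order that matches the definition of $\textbf{X}$ (state index $i$ outermost, coefficient index $j$ innermost) so that the Kronecker product appears in the correct orientation $S\otimes\Phi$ rather than $\Phi\otimes S$; observing that $\langle\phi_0,\phi_0\rangle = \int_\mathcal{D}\rho(\xi)\rd\xi = 1$, which makes the $j=0$ entry of $\Phi$ consistent with the mean-weighting that $S$ applies in the original cost; and the standard integrability check needed to justify the Fubini exchange.
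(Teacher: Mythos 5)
Your proof is correct, but it is organized differently from the paper's. The paper first rewrites $\mathbb{E}[\mathcal{L}_e]$ purely in terms of statistical moments, via the identity $\operatorname{trace}(\mathbb{E}[xx^\top]S)=\operatorname{trace}(\operatorname{cov}[x]S)+\mathbb{E}[x]^\top S\,\mathbb{E}[x]$, and only then substitutes the gPC moment formulas \eqref{gggg_exp} and \eqref{gggg_var} for the mean and variance before reassembling the block-diagonal weighting matrix. You instead substitute the truncated expansion \eqref{gpcx} directly into the quadratic form, absorb the deterministic target into the $\phi_0$ coefficient, and let orthogonality annihilate the cross terms in one step, arriving at $\mathbb{E}[(x_i-x_i^{goal})^2]=(x_{i0}-x_i^{goal})^2\langle\phi_0,\phi_0\rangle+\sum_{j=1}^K x_{ij}^2\langle\phi_j,\phi_j\rangle$; the two routes use the same orthogonality facts and give the same answer, since the paper's mean and variance formulas are themselves consequences of that orthogonality. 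Your version is slightly more economical (it avoids the trace/covariance detour and the bookkeeping of the $\tfrac12$ factor on the covariance term, which is easy to misplace in the paper's route) and it makes the Fubini interchange between the expectation and the time integral explicit, which the paper leaves implicit by working pointwise in $t$; the paper's version has the virtue of making transparent the interpretation of $\bm{S}$ as simultaneously penalizing expected tracking error and variance, which is the content of the remark that follows the lemma. Both correctly rely on the convention $\langle\phi_0,\phi_0\rangle=1$ (i.e., $\langle\cdot,\cdot\rangle$ taken as an expectation under the normalized density) so that the $j=0$ entry of the Kronecker factor reduces to $S$ acting on the mean coefficients.
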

\begin{proof}
    We will consider only the running cost since a similar analysis can be performed for $\mathcal{F}_e$. One has
    \begin{equation}
    \begin{split}
    \label{th2_eq1}
    &\mathbb{E}[\mathcal{L}_e(x,u)]=\\
    	&\mathbb{E}[\frac{1}{2}(x(t)-x^{goal}(t))^\top S(x(t)-x^{goal}(t))]+\frac{1}{2}u^\top(t)Ru(t)=\\
    	&\frac{1}{2}\mathbb{E}\bigg[x(t)^\top Sx(t)-2x(t)^\top Sx^{goal}(t)+x^{goal}(t)^\top Sx^{goal}(t)+\\
    	&\frac{1}{2}u^\top(t)Ru(t)\bigg]=\\
    	&\frac{1}{2}\bigg(\text{trace}(\mathbb{E}[x(t)x(t)^\top]S)-2\mathbb{E}[x(t)]^\top Sx^{goal}(t)+\\
    	&x^{goal}(t)^\top Sx^{goal}(t)\bigg)+\frac{1}{2}u^\top(t)Ru(t).
    	\end{split}
    \end{equation}
    Now note that
    \begin{equation}
    \label{th2_eq2}
    \text{trace}(\mathbb{E}[x(t)x(t)^\top]S)=\text{trace}(\text{cov}[x(t)]S)+\mathbb{E}[x(t)]^\top S\mathbb{E}[x(t)],
    \end{equation}
    where $\text{cov}[\cdot]$ denotes the covariance operator. Plugging \eqref{th2_eq2} in \eqref{th2_eq1} gives
    \begin{equation}
    \label{eq_L}
    \begin{split}
    &\mathbb{E}[\mathcal{L}_e(x,u)]=\\
    &\frac{1}{2}(\mathbb{E}[x(t)]-x^{goal}(t))^\top S(\mathbb{E}[x(t)]-x^{goal}(t))+\text{trace}(\text{cov}[x(t)]S)\\
    &+\frac{1}{2}u^\top(t)Ru(t)\approx\\
    &\frac{1}{2}\big((x_{10},...,x_{n0})^\top-x^{goal}(t)\big)^\top S\big((x_{10},...,x_{n0})^\top-x^{goal}(t)\big)+\\
    &\text{trace}(\text{cov}[x(t)]S)+\frac{1}{2}u^\top(t)Ru(t),
    \end{split}
    \end{equation}
    where the last equality is due to \eqref{gggg_exp}. Now by using \eqref{gggg_var} and recalling that $S$ is diagonal, the second term of \eqref{eq_L} becomes
    \begin{equation}
    \begin{split}
    \label{th2_eq3}
    &\text{trace}(\text{cov}[x(t)]S)=\sum_{i=1}^n\text{var}[x_i(t)]S_{ii}\approx\sum_{i=1}^n\sum_{j=1}^Kx_{ij}^2(t)\langle\phi_j,\phi_j\rangle S_{ii}=\\
    &\sum_{i=1}^n(x_{i1},...,x_{iK})\begin{bmatrix}
    S_{ii}\langle\phi_1,\phi_1\rangle&\dots&0\\
    \vdots&\ddots&\vdots\\
    0&\dots&S_{ii}\langle\phi_K,\phi_K\rangle
    \end{bmatrix}\begin{pmatrix}x_{i1}\\\vdots\\x_{iK}\end{pmatrix}.
    \end{split}
    \end{equation}
    After plugging \eqref{th2_eq3} into \eqref{eq_L} and noting that $\langle\phi_0,\phi_0\rangle=1$ for all polynomials in Table \ref{tab:ask}, equation \eqref{gpcrunnL} is obtained.
\end{proof}

By observing eqs. \eqref{gpc_S} -- \eqref{gpc_Xg} $\&$ \eqref{eq_L}, \eqref{th2_eq3}, we make the following remarks.
\begin{remark}
	The optimal control problem in \eqref{gpcstochoptprob1} with cost terms given in \eqref{gpcrunnL} -- \eqref{gpc_Xg}, penalizes trajectories with: i) large deviation between the expected states and the target states, ii) high variance.
\end{remark}

\begin{remark}
	The weighting matrices in \eqref{gpc_S}, \eqref{gpc_Sf} affect all gPC coefficients of a particular state equivalently. In practice, assigning different weights can provide greater freedom between penalizing large expected errors and high variance. For example, one can define $\bm{S_f}$ as  
	
	\begin{equation}
	\label{gpc_Sf0}
	\bm{S_f}=
	\begin{bmatrix}
	\bm{S_{f1}}&\dots&\bold{0}_{(K+1)}\\
	\vdots&\ddots&\vdots\\
	\bold{0}_{(K+1)}&\dots&\bm{S_{fn}}
	\end{bmatrix},
	\end{equation}
	with
	\begin{equation}
	\label{gpcsf0001}
	\bm{S_{fi}}=\text{diag}(s_{f_{i0}},s_{f_{i1}}\langle\phi_1,\phi_1\rangle ,\dots,s_{f_{iK}}\langle\phi_K,\phi_K\rangle )\in\mathbb{R}^{(K+1)\times(K+1)}.
	\end{equation}
	The running cost, $\bm{L}$ may be defined similarly.
\end{remark}
In our simulated examples, we will consider quadratic cost functions with weighting matrices as in \eqref{gpc_Sf0}, \eqref{gpcsf0001}.

\subsection{The gPC-DDP framework in discrete time}
The Differential Dynamic Programming algorithm has been developed in \cite{ddp} for the deterministic case. It numerically solves the optimal control problem by using quadratic expansions of the dynamics and the cost function along nominal trajectories. The scheme is iterative in nature such that it computes the optimal control deviation given a nominal input signal. The nominal input is then updated and the process is repeated until convergence.

We extend the original DDP method in order to handle Polynomial Chaos representations of the dynamics and the cost. The goal is to solve the generic optimal control problem
\begin{equation}
\label{gpcstochoptprob3}
\begin{split}
&\min_u\hspace{1.7mm}\bm J(\textbf X^0,u)\\
\text{s.t.}\quad\dot{\bold{X}}(t)&=\bold f(\bold{X},u,t),\quad \bold{X}^0 = \overline{\bold{X}}(t_0),
\end{split}
\end{equation}
with
\begin{equation}
\label{Jgpc}
\bm J(\textbf X^0,u)=\int_{t_0}^{t_f} \bm{L}(\textbf{X},u,t)\rd t+\bm{F}(\textbf{X}(t_f),t_f).
\end{equation}
We start by defining the value function as follows

\begin{equation}
\label{v}
V(\textbf X,t)=\displaystyle\min_u\bm J(\textbf{X}(t),u),
\end{equation}
In the discrete-time setting, the derivation is based on Bellman's principle of optimality. One can write \cite{ddp}
\begin{equation}
\label{v17}
V(\textbf{X}(t_k),t_k)=\min_u\bigg[\int_{t_k}^{t_{k+1}} \bm{L}(\textbf{X},u)\rd t+V(\textbf{X}(t_{k+1}),t_{k+1})\bigg],
%V(x(t_k),t_k)=\min_u\bigg[\int_{t_k}^{t_{k+1}} \mathcal{L}(x,u,t)\rd t+V(x(t_{k+1}),t_{k+1})\bigg],
\end{equation}
where $t_k$ denotes the $k^{\text{th}}$ time instant (i.e., $t_k=k\Delta t$, such that $\Delta t$ is the time step). The integral on the right hand side of \eqref{v17} will be approximated via a left hand rectangle method. In what follows, define the $Q$-function as
\begin{equation}
\label{Q}
Q(\textbf{X}(t_k),u(t_k))=\bm{L}^k+V(\textbf{X}(t_{k+1}),t_{k+1}),
%Q(x(t_k),u(t_k))={\mathcal{L}}(x(t_{k}),u(t_{k}),t_{k})\Delta t+V(x(t_{k+1}),t_{k+1}).
\end{equation}
with $\bm{L}^k=\bm{L}(\textbf{X}(t_{k}),u(t_{k}))\Delta t$. Assume now that a nominal control input $\bar{u}$ and the associated state trajectory \iffalse$\bar{x}$\fi $\overline{\textbf{X}}$ are given. The first step will be to linearize \eqref{gpcX} about $\overline{\textbf{X}}$, $\bar{u}$. Suppose that that the dynamics and cost functions are differentiable up to the second order. Using an Euler discretization scheme gives
\begin{equation}
\label{deltax}
\begin{split}
&\delta\textbf{X}(t_{k+1})\approx\Theta(t_k)\delta \textbf{X}(t_k)+B(t_k)\delta u(t_k)+\frac{1}{2}\Delta t\times\\
&\bigg[\sum_{i,j=1}^{\bm n}\nabla_{\textbf{x}_i\textbf{x}_j}\textbf{f}(t_k)\delta\textbf{X}_i(t_k)\delta\textbf{X}_j(t_k)+\sum_{i,j=1}^m\nabla_{u_iu_j}\textbf{f}(t_k)\delta u_i(t_k)\delta u_j(t_k)\\
&+
2\sum_{i,j=1}^{\textbf n,m}\nabla_{\textbf{x}_iu_j}\textbf{f}(t_k)\delta\textbf{X}_i(t_k)\delta u_j(t_k)\bigg],
%\delta x(t_{k+1})\approx\Phi(t_k)\delta x(t_k)+B(t_k)\delta u(t_k),
\end{split}
\end{equation}
where $\bm n=n(K+1)$ is the dimension of the gPC-based system (i.e., $\textbf{X}\in\mathbb{R}^{\bm n}$) and $\Theta(t_k)=(I+\Delta t\nabla_\textbf{x}\textbf{f}(\textbf{X},u,t_k))|_{\bar{\textbf{x}},\bar{u}}$, $B(t_k)=\Delta t\nabla_u\textbf{f}(\textbf{X},u,t_k)|_{\bar{\textbf{x}},\bar{u}}$. Note that all terms on the right hand side of \eqref{deltax} are evaluated at the nominal trajectories. \iffalse$\Phi(t_k)=(I+\Delta t\nabla_xf(\bar{x},\bar{u},t_k))$\fi Moreover, $\delta \textbf{X}(t_k)$, $\delta u(t_k)$ represent state and control deviations about $\overline{\textbf{X}}(t_k)$ and $\bar{u}(t_k)$ respectively. %To proceed, $\mathcal{L}(x,u,t)$ in \eqref{J} will  be approximated around $\overline{x},\overline{u}$ as
%\begin{equation}
%\label{Ldt}
%\begin{split}
%&\int\limits_{t_k}^{t_{k+1}} \mathcal{L}(x,u,t){\rm d}t \approx \mathcal{L}(x(t_k),u(t_k),t_k){\rm d}t =A+H\delta x(t_k)\\
%& +C\delta u(t_k) +  \frac{1}{2}\big(\delta x^T(t_k)D\delta x(t_k)+\delta u^T(t_k)E\delta x(t_k) \big)   \\
%& +\frac{1}{2}\big(\delta x^T(t_k)F\delta u(t_k)+\delta u^T(t_k)G\delta u(t_k)\big), \\
%\end{split}
%\end{equation}
%where
%\begin{equation*} \begin{split}
%& A=\mathcal{L}(\overline{x}(t_k),\overline{u}(t_k),t_k)\Delta t,   \quad  H= \mathcal{L}_{x} ^\top(\overline{x}(t_k),\overline{u}(t_k),t_k)\Delta t \\
%& C=\mathcal{L}_{u}^\top(\overline{x}(t_k),\overline{u}(t_k),t_k)\Delta t, \quad D=\mathcal{L}_{xx}(\overline{x}(t_k),\overline{u}(t_k),t_k)\Delta t \\
%& E=\mathcal{L}_{ux}(\overline{x}(t_k),\overline{u}(t_k),t_k)\Delta t, \quad F=\mathcal{L}_{xu}(\overline{x}(t_k),\overline{u}(t_k),t_k)\Delta t \\
%& G=\mathcal{L}_{uu}(\overline{x}(t_k),\overline{u}(t_k),t_k)\Delta t.
%\end{split}
%\end{equation*}

To proceed, eq. \eqref{Q} will be quadratically approximated. By using \eqref{deltax}, one can obtain
\begin{equation}
\label{Qexp}
\begin{split}
&Q(\textbf{X}(t_k),u(t_k))\approx Q_0^k+\delta \textbf{X}(t_{k})^\top Q_{\textbf{x}}^k+\delta u(t_{k})^\top Q_{u}^k\\
&+\frac{1}{2}\delta \textbf{X}(t_{k})^\top Q_{\textbf{xx}}^k\delta \textbf{X}(t_{k})+\frac{1}{2}\delta u(t_{k})^\top Q_{uu}^k\delta u(t_{k})\\
&+\frac{1}{2}\delta u(t_{k})^\top Q_{u\textbf{x}}^k\delta \textbf{X}(t_{k})+\frac{1}{2}\delta \textbf{X}(t_{k})^\top Q_{\textbf{x}u}^k\delta u(t_{k}),
%&Q(x(t_k),u(t_k))\approx Q_0+\delta x(t_{k})^\top Q_{x}+\delta u(t_{k})^\top Q_{u}\\
%&+\frac{1}{2}\delta x(t_{k})^\top Q_{xx}\delta x(t_{k})+\frac{1}{2}\delta u(t_{k})^\top Q_{uu}\delta u(t_{k})\\
%&+\frac{1}{2}\delta u(t_{k})^\top Q_{ux}\delta x(t_{k})+\frac{1}{2}\delta x(t_{k})^\top Q_{xu}\delta u(t_{k}),
\end{split}
\end{equation}
where
\begin{equation}
\label{Qddp}
\begin{split}
Q_0^k&=\bm{L}^k|_{\bar{\textbf{x}},\bar{u}}+V^{k+1}|_{\bar{\textbf{x}}},\\
Q_x^k&=\bm{L}^k_\textbf{x}|_{\bar{\textbf{x}},\bar{u}}+(\Theta^k)^\top V_{\textbf{x}}^{k+1}|_{\bar{\textbf{x}}},\\
Q_u^k&=\bm{L}^k_u|_{\bar{\textbf{x}},\bar{u}}+(B^k)^\top V_{\textbf{x}}^{k+1}|_{\bar{\textbf{x}}},\\
Q_{\textbf{xx}}^k&=\bm{L}^k_{\textbf{xx}}|_{\bar{\textbf{x}},\bar{u}}+(\Theta^k)^\top V_{\textbf{xx}}^{k+1}|_{\bar{\textbf{x}}}\Theta^k+(\Delta t\nabla_{\textbf{x}\textbf{x}}\textbf{f}^k|_{\bar{\textbf{x}},\bar{u}})\times_1V_{\textbf{x}}^{k+1}|_{\bar{\textbf{x}}},\\
Q_{\textbf{x}u}^k&=\bm{L}^k_{\textbf{x}u}|_{\bar{\textbf{x}},\bar{u}}+(\Theta^k)^\top V_{\textbf{xx}}^{k+1}|_{\bar{\textbf{x}}}B^k+(\Delta t\nabla_{\textbf{x}u}\textbf{f}^k|_{\bar{\textbf{x}},\bar{u}})\times_1V_{\textbf{x}}^{k+1}|_{\bar{\textbf{x}}},\\
Q_{u\textbf{x}}^k&=\bm{L}^k_{u\textbf{x}}|_{\bar{\textbf{x}},\bar{u}}+(B^k)^\top V_{\textbf{xx}}^{k+1}|_{\bar{\textbf{x}}}\Theta^k+(\Delta t\nabla_{u\textbf{x}}\textbf{f}^k|_{\bar{\textbf{x}},\bar{u}})\times_1V_{\textbf{x}}^{k+1}|_{\bar{\textbf{x}}},\\
Q_{uu}^k&=\bm{L}^k_{uu}|_{\bar{\textbf{x}},\bar{u}}+(B^k)^\top V_{\textbf{xx}}^{k+1}|_{\bar{\textbf{x}}}B^k+(\Delta t\nabla_{uu}\textbf{f}^k|_{\bar{\textbf{x}},\bar{u}})\times_1V_{\textbf{x}}^{k+1}|_{\bar{\textbf{x}}}
%&Q_0=\mathcal{L}(t_{k})|_{\bar{x},\bar{u}}\Delta t+V(t_{k+1})|_{\bar{x}},\\
%&Q_x=\mathcal{L}_x(t_{k})|_{\bar{x},\bar{u}}\Delta t+\Phi(t_{k})^\top V_{x}(t_{k+1})|_{\bar{x}},\\
%&Q_u=\mathcal{L}_u(t_{k})|_{\bar{x},\bar{u}}\Delta t+B(t_{k})^\top V_{x}(t_{k+1})|_{\bar{x}},\\
%&Q_{xx}=\mathcal{L}_{xx}(t_{k})|_{\bar{x},\bar{u}}\Delta t+\Phi(t_{k})^\top V_{xx}(t_{k+1})|_{\bar{x}}\Phi(t_{k}),\\
%&Q_{xu}=\mathcal{L}_{xu}(t_{k})|_{\bar{x},\bar{u}}\Delta t+\Phi(t_{k})^\top V_{xx}(t_{k+1})|_{\bar{x}}B(t_{k}),\\
%&Q_{ux}=\mathcal{L}_{ux}(t_{k})|_{\bar{x},\bar{u}}\Delta t+B(t_{k})^\top V_{xx}(t_{k+1})|_{\bar{x}}\Phi(t_{k}),\\
%&Q_{uu}=\mathcal{L}_{uu}(t_{k})|_{\bar{x},\bar{u}}\Delta t+B(t_{k})^\top V_{xx}(t_{k+1})|_{\bar{x}}B(t_{k}),
\end{split}
\end{equation}
and $\nabla_{\textbf{x}\textbf{x}}\textbf{f}\in\mathbb{R}^{\bm n\times\bm n\times\bm n}$, $\nabla_{\textbf{x}u}\textbf{f}\in\mathbb{R}^{\bm n\times\bm n\times m}$, $\nabla_{u\textbf{x}}\textbf{f}\in\mathbb{R}^{\bm n\times m\times\bm n}$, $\nabla_{uu}\textbf{f}\in\mathbb{R}^{\bm n\times m\times m}$ are three-dimensional tensors. We denote by $\times_i$ the ``mode-$i$" multiplication between a tensor and a vector (or matrix) \cite{matlabtensor}. For example, we have that $(\nabla_{\textbf{x}\textbf{x}}\textbf{f}^k|_{\bar{\textbf{x}},\bar{u}})\times_1V_{\textbf{x}}^{k+1}|_{\bar{\textbf{x}}}=\sum_{i=1}^{\bm n}[V_{\textbf{x}}(t_{k+1})|_{\bar{\textbf{x}}}]_i\nabla_{\textbf{x}\textbf{x}}[\textbf{f}(t_k)|_{\bar{\textbf{x}},\bar{u}}]_i$. The remaining multiplications are computed similarly\footnote{$[\textbf f]_i$ denotes the $i^{\text{th}}$ element of $\textbf f$.}.

Now, in order for the right hand side of \eqref{v17} to attain its minimum, its derivative with respect to $\delta u$ must be equal to zero. By plugging \eqref{Q}, \eqref{Qexp} and \eqref{Qddp} into \eqref{v17} and following this strategy, one can get the optimal control deviation
\begin{equation}
\label{d13}
\delta u^*(t_k)=\ell(t_k)+\Sigma(t_k)\delta \textbf{X}(t_k),%\delta u^*(t_k)=l(t_k)+L(t_k)\delta x(t_k),
\end{equation}
where
\begin{equation*}
\ell(t_k)=-Q_{uu}^{-1}(t_k)Q_u(t_k),\quad \Sigma(t_k)=-Q_{uu}^{-1}(t_k)Q_{u\textbf x}(t_k).%l(t_k)=-Q_{uu}^{-1}Q_u,\quad L(t_k)=-Q_{uu}^{-1}Q_{ux}.
\end{equation*}
%At this point, a new control trajectory can be computed. Suppose the algorithm is running its $i^{\text{th}}$ iteration and denote by $^iu$ the current control trajectory. This implies that $^iu=\bar{u}$.
We then compute the updated control trajectory as follows
\begin{equation}
\label{newu}
u(t_k)=\bar{u}(t_k)+\gamma\ell(t_k)+\Sigma(t_k)\delta\textbf{X}(t_k),\quad k=0,1,...
\end{equation}
The term $\gamma$ satisfies $0<\gamma\leq1$. It is introduced for performing line searches, so that convergence is attained even for nonlinear problems (see section \ref{secconvg}). Specifically, it is initialized at each iteration as $\gamma=1$ and is reduced until cost reduction is attained.

Note that the controls update in \eqref{newu}, requires the gradient and Hessian of the value function at each time step. Towards that goal, $V(\textbf{X}(t_k),t_k)$ \iffalse$V(x(t_k),t_k)$\fi in \eqref{v17} will be expanded as
\begin{equation}
\label{vexp}
V(t_k)\approx V(t_k)|_{\bar{\textbf x}}+\delta \textbf{X}(t_k)^\top V_\textbf x(t_k)|_{\bar{\textbf x}}+\frac{1}{2}\delta \textbf{X}(t_k)^\top V_{\textbf{xx}}(t_k)|_{\bar{\textbf x}}\delta \textbf X(t_k).
%V(t_k)\approx V(t_k)|_{\bar{x}}+\delta x(t_k)^\top V_x(t_k)|_{\bar{x}}+\frac{1}{2}\delta x(t_k)^\top V_{xx}(t_k)|_{\bar{x}}\delta x(t_k).
\end{equation}
By plugging eqs. \eqref{Q}, \eqref{Qexp}, \eqref{d13} and \eqref{vexp} in \eqref{v17} and matching the zero order, first order and second order terms of the resulting expression, we can get the following equations
\begin{equation}
\begin{split}
\label{ric}
&V(t_{k})=Q_0^k+(-\frac{1}{2}\gamma^2+\gamma)(Q_{u}^k)^\top \ell(t_{k}),\\
&V_{\textbf{x}}(t_{k})=Q_{\textbf{x}}^k+\Sigma(t_k)^\top Q_{uu}^kl(t_k)+\Sigma(t_k)^\top Q_{u}^k+Q_{\textbf{x}u}(t_k)\ell(t_{k}),\\
&V_{\textbf{xx}}(t_{k})=Q_{\textbf{xx}}^k+\Sigma(t_k)^\top Q_{uu}^k\Sigma(t_k)+\Sigma(t_k)^\top Q_{ux}^k+Q_{\textbf{x}u}(t_k)\Sigma(t_{k}).
%&V(t_{k})=Q_0+Q_{u}^\top l(t_{k})+\frac{1}{2}l^\top(t_{k})Q_{uu}l(t_{k}),\\
%&V_{x}(t_{k})=Q_{x}+L(t_{k})^\top\big(Q_{u}+Q_{uu}l(t_{k})\big)+Q_{xu}l(t_{k}),\\
%&V_{xx}(t_{k})=Q_{xx}+L(t_{k})^\top Q_{uu}L(t_{k})+L(t_{k})^\top Q_{ux}+Q_{xu}L(t_{k}).
\end{split}
\end{equation}
The equations above are solved backwards in time. The boundary conditions are given by the final cost term. Specifically, $V(\textbf X(t_f))=\bm F(\textbf{X}(t_f))|_{\bar{\textbf x}}$, $V_{\textbf x}(\textbf{X}(t_f))=\bm F_\textbf x(\textbf{X}(t_f))|_{\bar{\textbf x}}$ and $V_{\textbf{xx}}(\textbf{X}(t_f))=\bm{F}_{\textbf{xx}}(\textbf{X}(t_f))|_{\bar{\textbf x}}$. Once \eqref{ric} has been computed for all time steps, $\delta u^*$ may be calculated as in \eqref{d13} and a new control trajectory $u$ can be determined as in \eqref{newu}. Then, we set $\bar{u}\leftarrow u$ and the procedure is repeated until convergence.%When the whole procedure has converged, one obtains an optimal control trajectory, $u^*$, an optimal state trajectory, $x^*$ and locally optimal feedback gains, $L^*$.

\begin{remark}
Since gPC-DDP works locally about nominal trajectories and does not rely on state space grids, scalability is attained. Hence, although gPC representations increase the dimension of the obtained state vector $\textbf{X}$, a tractable solution to \eqref{gpcstochoptprob3} can be determined.
\end{remark}

Observe that gPC-DDP requires the Jacobian and Hessian of the gPC-based dynamics $\textbf{f}$. The following proposition addresses the computation of $\nabla_\textbf{x}\textbf{f}$ and $\nabla_{\textbf{x}\textbf{x}}[\textbf{f}]_l$ ($l=1,...,\bm n$). The remaining terms can be determined similarly.

\begin{prop}
\label{gpcpropgrad}
	Consider the gPC-based dynamics $\emph{\textbf{f}}$ given in \eqref{gpcX}, and let $\dot{x}_{ij}$ be its $l^{\text{th}}$ element, defined in \eqref{ccc}. Then $\nabla_\bold{x}\bold{f}$ and $\nabla_{\emph{\textbf{x}\textbf{x}}}[\emph{\textbf{f}}]_l$ are given respectively by
	\begin{equation}
	\label{jacquad}
	\begin{split}
	&\nabla_\bold{x}\bold{f}=\\
	&\bigg[{\displaystyle\int_\mathcal{D}} \bigg(\nabla_xf(\xi)\otimes\big(\Phi(\xi)\otimes\Phi(\xi)^\top\big)\bigg)\rho(\xi)\rd\xi\bigg]\odot\bigg(\bold{1}_{n\times n}\otimes\psi\bigg),
	\end{split}
	\end{equation}
	
	\begin{equation}
	\label{hessf}
	\nabla_{\emph{\textbf{x}\textbf{x}}}[\emph{\textbf{f}}]_l={\displaystyle\int_\mathcal{D}} \bigg(\frac{\nabla_{xx}[f]_i(\xi)}{\langle\phi_j,\phi_j\rangle}\phi_j(\xi)\otimes\big(\Phi(\xi)\otimes\Phi(\xi)^\top\big)\bigg)\rho(\xi)\rd\xi,
	\end{equation}
	where $\otimes$ is the Kronecker product, $\odot$ is the Hadamard product and $\bold{1}$ is the unit matrix. Furthermore, $\Phi\in\mathbb{R}^{K+1}$ includes the $K+1$ orthogonal polynomials of the state vector (i.e., $\Phi=(\phi_0(\xi),...,\phi_K(\xi))^\top$), while $\psi=(1/\langle\phi_0,\phi_0\rangle,...,1/\langle\phi_K,\phi_K\rangle)^\top\in\mathbb{R}^{K+1}$.
\end{prop}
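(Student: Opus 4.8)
The plan is to obtain both arrays by differentiating the scalar components of $\textbf f$ in \eqref{ccc} entry-by-entry with the chain rule, and then to recognize the resulting expressions as block-Kronecker (and Hadamard) products. The one structural fact that makes everything go through is that the gPC expansion \eqref{gpcx} renders the dependence of the state field on the coefficient vector affine: since $x_c(t,\xi)\approx\sum_{b=0}^{K}x_{cb}(t)\phi_b(\xi)$, we have $\partial x_c(t,\xi)/\partial x_{ab}=\delta_{ca}\,\phi_b(\xi)$ with $\delta$ the Kronecker delta. Nothing beyond this identity and ordinary multivariate calculus is required.

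First I would fix the multi-index correspondence $l\leftrightarrow(i,j)$ (state label $i$, polynomial label $j$) used to linearly order $\textbf X$, and differentiate $\dot x_{ij}$ as given by \eqref{ccc}. Because $f$ is assumed twice continuously differentiable (so the Leibniz rule applies and we may differentiate under the integral sign; the standing $L^2_\rho$ hypotheses supply the needed integrability), the chain rule and the affine identity give
\[
\frac{\partial\dot x_{ij}}{\partial x_{ab}}=\frac{1}{\langle\phi_j,\phi_j\rangle}\int_\mathcal{D}\frac{\partial f_i}{\partial x_a}\big(x(t,\xi),u,t,\xi\big)\,\phi_b(\xi)\,\phi_j(\xi)\,\rho(\xi)\,\rd\xi .
\]
Regarding $\nabla_\textbf{x}\textbf f$ as an $n\times n$ array of $(K+1)\times(K+1)$ blocks indexed by state labels $(i,a)$, the displayed quantity is exactly the $(j,b)$-entry of block $(i,a)$. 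Reading $\phi_j\phi_b$ as the $(j,b)$-entry of the rank-one matrix $\Phi\Phi^\top=\Phi\otimes\Phi^\top$ and $\partial f_i/\partial x_a$ as the $(i,a)$-entry of $\nabla_x f$, the array $\int_\mathcal{D}\big(\nabla_x f(\xi)\otimes(\Phi(\xi)\otimes\Phi(\xi)^\top)\big)\rho(\xi)\rd\xi$ reproduces all these entries up to the normalization $1/\langle\phi_j,\phi_j\rangle$, which merely rescales row $j$ of every block by $\psi_j$; imposing this rescaling amounts to a Hadamard product against the matrix obtained by tiling $\psi$, which is \eqref{jacquad}.

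For the Hessian of $[\textbf f]_l=\dot x_{ij}$ I would differentiate the last display once more with respect to $x_{cd}$. The affine identity applied a second time gives $\partial(\partial f_i/\partial x_a)/\partial x_{cd}=(\partial^2 f_i/\partial x_a\partial x_c)\,\phi_d(\xi)$, so
\[
\frac{\partial^2\dot x_{ij}}{\partial x_{ab}\,\partial x_{cd}}=\frac{1}{\langle\phi_j,\phi_j\rangle}\int_\mathcal{D}\frac{\partial^2 f_i}{\partial x_a\partial x_c}(\xi)\,\phi_b(\xi)\,\phi_d(\xi)\,\phi_j(\xi)\,\rho(\xi)\,\rd\xi .
\]
Here $l$ — hence $j$ — is fixed, so $\phi_j/\langle\phi_j,\phi_j\rangle$ is a single scalar and no Hadamard correction is needed: reading $\partial^2 f_i/\partial x_a\partial x_c$ as the $(a,c)$-entry of $\nabla_{xx}[f]_i$ and $\phi_b\phi_d$ as the $(b,d)$-entry of $\Phi\otimes\Phi^\top$ yields \eqref{hessf} directly. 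The remaining derivatives $\nabla_u\textbf f$, $\nabla_{uu}\textbf f$, $\nabla_{\textbf x u}\textbf f$ follow by the same computation, with each factor $\phi_b$ or $\phi_d$ replaced by $1$ wherever the derivative is taken in a component of the deterministic control, since $\partial u/\partial x_{ab}=0$ and $u$ carries no polynomial dependence.

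I expect the main — though entirely routine — obstacle to be bookkeeping: keeping the pair of multi-indices $(i,j)\leftrightarrow l$ and $(a,b)\leftrightarrow m$ ordered consistently so that the ``outer'' $n\times n$ Kronecker factor aligns with the state labels while the ``inner'' $(K+1)\times(K+1)$ factor aligns with the polynomial labels, and tracking where the normalization $\psi$ lands (on the Jacobian's variable row index, hence a Hadamard product; on the Hessian's frozen row index, hence a harmless scalar). There is no analytic difficulty once differentiation under the integral sign is granted.
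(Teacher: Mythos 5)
Your argument is exactly the paper's: differentiate \eqref{ccc} entry-by-entry, use the chain rule together with the affine identity $\partial x_g/\partial x_{gh}=\phi_h(\xi)$ coming from \eqref{gpcx}, and then reassemble the resulting scalar expressions into the Kronecker/Hadamard form. Your write-up is in fact slightly more explicit than the paper's (which simply asserts the final ``matrix form'' step), and your bookkeeping of where the normalization $1/\langle\phi_j,\phi_j\rangle$ lands is correct.
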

\begin{proof}
		Consider the expression given in \eqref{ccc}. Since the gPC coefficients are deterministic, one has
		\begin{equation*}
		\begin{split}
		&\frac{\partial \textbf f_l}{\partial x_{gh}}=\frac{{\displaystyle\int_\mathcal{D}}\frac{\partial f_i(\xi)}{\partial x_{gh}}\phi_j(\xi)\rho(\xi)\rd\xi}{\langle\phi_j,\phi_j\rangle}=\frac{{\displaystyle\int_\mathcal{D}}\frac{\partial f_i(\xi)}{\partial x_g}\frac{\partial x_g}{\partial x_{gh}}\phi_j(\xi)\rho(\xi)\rd\xi}{\langle\phi_j,\phi_j\rangle}\\
		&=\frac{{\displaystyle\int_\mathcal{D}}\frac{\partial f_i(\xi)}{\partial x_g}\phi_h(\xi)\phi_j(\xi)\rho(\xi)\rd\xi}{\langle\phi_j,\phi_j\rangle},
		\end{split}
		\end{equation*}
		where the second equality is due to chain rule and the last one due to \eqref{gpcx}. Similarly, the following can be shown
		
		\begin{equation*}
		\frac{\partial^2 \textbf f_l}{\partial x_{gh}\partial x_{sd}}=\frac{{\displaystyle\int_\mathcal{D}}\frac{\partial^2 f_i(\xi)}{\partial x_g\partial x_s}\phi_d(\xi)\phi_h(\xi)\phi_j(\xi)\rho(\xi)\rd\xi}{\langle\phi_j,\phi_j\rangle},
		\end{equation*}
		where $i,g,s=1,...,n$ and $j,h, d=0,...,K$. Based on these expressions, the result is obtained by writing $\nabla_\bold{x}\bold{f}$ and $\nabla_{\textbf{x}\textbf{x}}[\textbf{f}]_l$ in matrix form.
\end{proof}
When we deal with mechanical systems, we can incorporate the Variational Integrator (VI) of section IV. Assuming that the Lagrangian $L$ and non-conservatives forces $F$ of the original system are known, the propagation phase of gPC-DDP can be implemented through Algorithm \ref{algdel}. It remains to develop a linearization scheme for the DEL equations \eqref{delg1}, \eqref{delg2} to replace eq. \eqref{deltax}. In this direction, one can use the results in \cite{lin} where the structured linearization of discrete systems was obtained. For our case, this is accomplished by differentiating eq. \eqref{delg1} implicitly with respect to $\bold{Q}^{k+1}$, and eq. \eqref{delg2} explicitly with respect to $\bold{\hat{P}}^{k+1}$. Due to space limitations we will only provide the final results and refer the interested reader to \cite{lin}.

To keep notation simple, let $\hat{L}^k$ and $\bold{\hat{F}}^{k\pm}$ denote $\hat{L}_d(\bold{Q}^k,\bold{Q}^{k+1})$ and $\bold{\hat{F}}^\pm_d(\bold{Q}^k,\bold{Q}^{k+1},u^k)$ respectively. Then, one can obtain for the gPC coefficients in \eqref{delg1}, \eqref{delg2} the following
\begin{equation}
\label{dellin}
\begin{split}
\begin{bmatrix}\delta \bold{Q}^{k+1}\\\delta \bold{\hat P}^{k+1}\end{bmatrix}&=\Theta^k_{DEL}\begin{bmatrix}\delta \bold{Q}^k\\\delta \bold{\hat P}^k\end{bmatrix}+B^k_{DEL}\delta u^{k}
+\\
&\frac{1}{2}\begin{bmatrix}\delta \bold{Q}^k\\\delta \bold{\hat P}^k\end{bmatrix}^\top\bigg(\Gamma^k_{DEL}\times_3\begin{bmatrix}\delta \bold{Q}^k\\\delta \bold{\hat P}^k\end{bmatrix}+\Delta^k_{DEL}\times_3\delta u^k\bigg)+\\
&\frac{1}{2}(\delta u^k)^\top\bigg(\Xi^k_{DEL}\times_3\begin{bmatrix}\delta \bold{Q}^k\\\delta \bold{\hat P}^k\end{bmatrix}+\Lambda^k_{DEL}\times_3\delta u^k\bigg),
\end{split}
\end{equation}
where $\Theta^k_{DEL}=\begin{pmatrix}\frac{\partial \bold{Q}^{k+1}}{\partial \bold{Q}^k} & \frac{\partial \bold{Q}^{k+1}}{\partial \bold{\hat P}^k}\\\frac{\partial \bold{\hat P}^{k+1}}{\partial \bold{Q}^k} & \frac{\partial \bold{\hat P}^{k+1}}{\partial \bold{\hat P}^k}\end{pmatrix}$, $B^k_{DEL}=\begin{pmatrix}\frac{\partial \bold{Q}^{k+1}}{\partial u^k}\\\frac{\partial \bold{\hat P}^{k+1}}{\partial u^k}\end{pmatrix}$,\newline$\Gamma^k_{DEL}=\frac{\partial\Theta^k_{DEL}}{\partial[(\bold{Q}^k)^\top,(\bold{\hat P}^k)^\top]}$, $\Xi^k_{DEL}=\frac{\partial B^k_{DEL}}{\partial[( \bold{Q}^k)^\top,(\bold{\hat P}^k)^\top]}$, $\Delta^k_{DEL}=\frac{\partial \Theta^k_{DEL}}{\partial u^k}$ and $\Lambda^k_{DEL}=\frac{\partial B^k_{DEL}}{\partial u^k}$. The first-order terms are given by
\begin{equation*}
\begin{array}{rcl}
\displaystyle{\frac{\partial \bold{Q}^{k+1}}{\partial \bold{Q}^k}}&=&-(\hat{M}^k)^{-1}[D_1D_1\hat{L}^k+D_1\bold{\hat{F}}^{k-}],\\
\displaystyle{\frac{\partial \bold{Q}^{k+1}}{\partial \bold{\hat P}^k}}&=&-(\hat{M}^k)^{-1},\\
\displaystyle{\frac{\partial \bold{Q}^{k+1}}{\partial u^k}}&=&-(\hat{M}^k)^{-1}D_3\bold{\hat{F}}^{k-},\\
\displaystyle{\frac{\partial \bold{\hat P}^{k+1}}{\partial \bold{Q}^k}}&=&[D_2D_2\hat{L}^k+D_2\bold{\hat{F}}^{k+}]\displaystyle{\frac{\partial \bold{Q}^{k+1}}{\partial \bold{Q}^k}}+D_1D_2\hat{L}^k+D_1\bold{\hat{F}}^{k+},\\
\displaystyle{\frac{\partial \bold{\hat P}^{k+1}}{\partial \bold{\hat P}^k}}&=&[D_2D_2\hat{L}^k+D_2\bold{\hat{F}}^{k+}]\displaystyle{\frac{\partial \bold{Q}^{k+1}}{\partial \bold{\hat P}^k}},\\
\displaystyle{\frac{\partial \bold{\hat P}^{k+1}}{\partial u^k}}&=&[D_2D_2\hat{L}^k+D_2\bold{\hat{F}}^{k+}]\displaystyle{\frac{\partial \bold{Q}^{k+1}}{\partial u^k}}+D_3\bold{\hat{F}}^{k+},\\
\hat{M}^k&=&D_2D_1\hat{L}^k+D_2\bold{\hat{F}}^{k-}.
\end{array}
\end{equation*}
The second-order terms included in $\Gamma^k_{DEL}$, $\Delta^k_{DEL}$, $\Lambda^k_{DEL}$ and $\Xi^k_{DEL}$, are defined in Appendix \ref{appdel}. Also, the inversion of matrix $\hat{M}^k$ is guaranteed by using the implicit function theorem on \eqref{delg1} \cite{lin}.

When the VI is incorporated, we only have to modify the $Q$-functions in \eqref{Qddp} by using \eqref{dellin}. Specifically, we will substitute $\Theta\leftarrow\Theta_{DEL}$, $B\leftarrow B_{DEL}$, $\Delta t\nabla_{\textbf{x}u}\textbf f\leftarrow\Xi_{DEL}$, $\Delta t\nabla_{\textbf{xx}}\textbf f\leftarrow\Gamma_{DEL}$, $\Delta t\nabla_{u\textbf{x}}\textbf f\leftarrow\Delta_{DEL}$ and $\Delta t\nabla_{uu}\textbf f\leftarrow\Lambda_{DEL}$.

Finally, note that computing the linearization scheme in \eqref{dellin}, requires defining the Jacobians and Hessians of $\hat{L}^k$, $\bold{\hat{F}}^{k\pm}$ with respect to the gPC coordinates. Below we give an expression for $D_1D_1\hat{L}^k$. The remaining terms can be determined similarly.

\begin{prop}
	The Hessian $D_1D_1\hat{L}^k$ is given by
	\begin{equation}
	\label{kaioken}
	\begin{split}
	&D_1D_1\hat{L}^k=\\
	&{\displaystyle\int_\mathcal{D}}\bigg(D_1D_1L_d(q^k,q^{k+1},\xi)\otimes\big(\Phi(\xi)\otimes\Phi(\xi)^\top\big)\bigg)\rho(\xi)\rd(\xi),
	\end{split}
	\end{equation}
	where $\Phi$ has been defined in Proposition \ref{gpcpropgrad}.
\end{prop}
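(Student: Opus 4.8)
The plan is to mimic, at the level of the discrete Lagrangian, the chain‑rule computation already carried out for the gPC dynamics in Proposition~\ref{gpcpropgrad}. The first step is to make the dependence of $\hat{L}_d$ on the gPC coordinates explicit: combining the definition \eqref{Lhat} of $\hat{L}$ with the quadrature rule \eqref{lddd}, the linearity of the gPC expansion, and \eqref{gpchamil}, one writes $\hat{L}_d(\bold{Q}^k,\bold{Q}^{k+1})=\int_\mathcal{D}L_d(q^k(\xi),q^{k+1}(\xi),\xi)\,\rho(\xi)\rd\xi$, where $q_i^k(\xi)=\sum_{j=0}^Kq_{ij}^k\phi_j(\xi)$ and $L_d(\cdot,\cdot,\xi)$ is the per‑sample discrete Lagrangian obtained from $L$, its $\xi$‑dependence originating from the uncertain parameters entering $L$.

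The second step is to differentiate this expression twice with respect to the entries $q_{gh}^k$ of $\bold{Q}^k$. Since the gPC coefficients are deterministic, the derivatives pass under the integral over $\mathcal{D}$ (a Leibniz / dominated‑convergence argument, legitimate under the assumed second‑order differentiability of $L$). Applying the chain rule together with $\partial q_i^k(\xi)/\partial q_{gh}^k=\delta_{ig}\phi_h(\xi)$ gives $\partial\hat{L}_d/\partial q_{gh}^k=\int_\mathcal{D}[D_1L_d(q^k(\xi),q^{k+1}(\xi),\xi)]_g\,\phi_h(\xi)\rho(\xi)\rd\xi$, and differentiating once more with respect to $q_{sd}^k$ yields
\begin{equation*}
\frac{\partial^2\hat{L}_d}{\partial q_{gh}^k\,\partial q_{sd}^k}=\int_\mathcal{D}\big[D_1D_1L_d(q^k(\xi),q^{k+1}(\xi),\xi)\big]_{gs}\,\phi_h(\xi)\,\phi_d(\xi)\,\rho(\xi)\rd\xi,
\end{equation*}
for $g,s=1,\dots,N$ and $h,d=0,\dots,K$, exactly parallel to the Hessian formula in the proof of Proposition~\ref{gpcpropgrad}.

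The last step is to collect these scalar entries into the claimed matrix identity. Under the ordering $\bold{Q}=(q_{10},\dots,q_{1K},\dots,q_{N0},\dots,q_{NK})^\top$, the $(g,s)$ block of $D_1D_1\hat{L}^k$ is the $(K+1)\times(K+1)$ matrix $\int_\mathcal{D}[D_1D_1L_d]_{gs}\,\Phi(\xi)\Phi(\xi)^\top\rho(\xi)\rd\xi$ with $\Phi=(\phi_0,\dots,\phi_K)^\top$; stacking these blocks over $g,s=1,\dots,N$ is precisely a Kronecker product of the $N\times N$ per‑sample Hessian $D_1D_1L_d$ with $\Phi\Phi^\top=\Phi\otimes\Phi^\top$, and pulling the integral outside produces \eqref{kaioken}.

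I do not expect a genuine obstacle here — the computation uses the same mechanism as Proposition~\ref{gpcpropgrad}, so the phrase "the remaining terms can be determined similarly" is meant literally. The only points requiring some care are (i) justifying the interchange of differentiation and integration, and (ii) the index bookkeeping, i.e. checking that the block layout of $D_1D_1L_d\otimes(\Phi\otimes\Phi^\top)$ matches the chosen ordering of the gPC coefficients in $\bold{Q}$ and that, for a column vector $\Phi$, the Kronecker product $\Phi\otimes\Phi^\top$ coincides with the outer product $\Phi\Phi^\top$. The analogous Jacobians and Hessians of $\hat{L}^k$ and $\bold{\hat{F}}^{k\pm}$ then follow identically.
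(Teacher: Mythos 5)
Your proof is correct and follows essentially the same route as the paper: rewrite $\hat{L}^k$ as $\int_\mathcal{D}L_d(q^k,q^{k+1},\xi)\rho(\xi)\rd\xi$, differentiate under the integral via the chain rule using $\partial q_i^k/\partial q_{gh}^k=\delta_{ig}\phi_h(\xi)$, and assemble the resulting entries into the Kronecker-product form. The extra care you take with the interchange of differentiation and integration and with the block ordering is consistent with, and slightly more explicit than, the paper's "writing $D_1D_1\hat{L}^k$ in matrix form."
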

\begin{proof}
	First, observe from \eqref{lddd} and \eqref{Lhat} that $\hat{L}^k=\int_\mathcal{D}L_d(q^k,q^{k+1},\xi)\rho(\xi)\rd\xi$. Then, by following an approach similar to the proof in Proposition \ref{gpcpropgrad} the entries of $D_1D_1\hat{L}^k$ can be computed as
	
	\begin{equation*}
	\frac{\partial^2 \hat{L}^k}{\partial q_{gh}^k\partial q_{ij}^k}=\int_\mathcal{D}\frac{\partial^2 L_d(q^k,q^{k+1},\xi)}{\partial q_i^k\partial q_g^k}\phi_h(\xi)\phi_j(\xi)\rho(\xi)\rd\xi,
	\end{equation*}
	where $i,g=1,...,N$ and $j,h=0,...,K$. Then the result can be derived by writing $D_1D_1\hat{L}^k$ in matrix form.
\end{proof}

We conclude this section by summarizing the required steps of gPC-DDP in Algorithm \ref{gpc0}. 
\begin{algorithm}[h]
	\SetAlgoLined
	\KwData{Dynamics and cost functions of \eqref{gpcstochoptprob3}, nominal control trajectory $\bar{u}$;}
	Get the nominal state trajectory $\overline{\textbf X}$ under controls $\bar{u}$ by propagating a discrete version of \eqref{gpcX} forward in time (or by using Algorithm 1, if the VI is employed);\\
	\Repeat{\emph{convergence}}{
	Linearize dynamics about $\bar{u}$, $\overline{\textbf X}$ using eq. \eqref{deltax} (or eq. \eqref{dellin}, if the VI is employed);\\
	Backpropagate $V$, $V_x$, $V_{xx}$ using \eqref{ric};\\
	Pick a specific value for the line search parameter $\gamma$, with $0<\gamma\leq1$. Starting from $k=0$ with $\delta \textbf X^0=0$, compute a new control $u^k$ as in \eqref{newu}. Get the next state $\textbf X^{k+1}$ as described in step 1. Define $\delta\textbf X^{k+1}$ and repeat the procedure for the entire time horizon. Given the obtained state trajectory $\textbf X$ and control sequence $u$, compute the total cost $\bm J$. Accept the new controls if cost reduction is attained. Otherwise, reduce $\gamma$ and repeat;\\
        Update $\bar{u}\leftarrow u$, $\overline{\textbf X}\leftarrow\textbf X$;
}
	\caption{gPC-DDP}
	\label{gpc0}
\end{algorithm}

\subsection{Convergence analysis of gPC-DDP}\label{secconvg}
The authors in \cite{liao000} showed that under some mild assumptions, the original Differential Dynamic Programming algorithm will converge to a stationary solution, regardless of the initial state. Moreover, \cite{ddp0} proves that DDP achieves locally quadratic convergence rates. Unfortunately, the analysis in the latter paper deals only with scalar dynamic systems and does not consider a terminal cost in the problem formulation. Here, we extend the aforementioned works and provide the convergence properties of gPC-DDP for the generic optimal control problem in \eqref{gpcstochoptprob3}.

The following set of assumptions is necessary for our analysis.
\begin{ass}
\label{assgpc}
i) The dynamics and cost functions of \eqref{gpcstochoptprob3} are differentiable up to the third order over a compact convex set $\mathcal{R}\in\mathbb{R}^{\bm n+m}$, ii) $(\emph{\textbf{X}}(t_k),u(t_k))\in\mathcal{R}$, $\forall t_k\in[t_0,t_f]$, iii) $Q_{uu}$ in \eqref{Qddp} remains positive definite for all time instants and iterations of gPC-DDP.
\end{ass}

The convergence properties of gPC-DDP are established by the next theorems.
\begin{theorem}
Consider the optimal control problem in \eqref{gpcstochoptprob3} and suppose Assumption \ref{assgpc} holds. Then, the gPC-DDP algorithm will globally converge to a stationary solution for arbitrary initialization. Moreover, when the line search parameter, $\gamma$, is small enough, the total cost reduction at each iteration is given by
\begin{equation*}
\bm J^{(l)}-\bm J^{(l-1)}=(-\gamma+\frac{\gamma^2}{2})\sum_{k=0}^{K_f-1}(Q_u^k)^\top (Q_{uu}^k)^{-1}Q_u^k,
\end{equation*}
with $\bm J^{(l)}$ being the total cost at iteration ``$l$", and $K_f$ being the discretized time horizon.
\end{theorem}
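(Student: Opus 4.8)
The plan is to establish the cost-reduction formula directly from the backward recursion, and then to obtain global convergence as a corollary. The key structural fact is that the right-hand side of \eqref{v17}, after plugging in the quadratic expansions \eqref{Qexp}--\eqref{Qddp} and the parametrized control update \eqref{newu} with $\delta\textbf{X}^0=0$, telescopes: the value function at $t_0$ equals $\bm J^{(l)}$ up to the quadratic approximation error, and the change in $V^k$ from the update can be read off term by term. First I would fix an iteration $l$, denote the nominal trajectory by $(\overline{\textbf{X}},\bar u)$ with cost $\bm J^{(l-1)}$, and substitute the candidate control deviation $\delta u(t_k)=\gamma\ell(t_k)+\Sigma(t_k)\delta\textbf{X}(t_k)$ into the quadratic model of $Q$. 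Because $\ell=-Q_{uu}^{-1}Q_u$ and $\Sigma=-Q_{uu}^{-1}Q_{u\textbf{x}}$, the $\delta u$-dependent part of $Q$ collapses to the scalar correction $(-\gamma+\tfrac12\gamma^2)(Q_u^k)^\top Q_{uu}^{-1}(t_k)Q_u^k$ plus terms that are absorbed into the redefinition of $V^k$ through \eqref{ric}; this is exactly the first line of \eqref{ric}. Summing the per-stage corrections over $k=0,\dots,K_f-1$ and using that $\delta\textbf{X}^0=0$ kills all first- and second-order-in-$\delta\textbf{X}$ contributions at the initial time, one gets that the predicted cost change is $(-\gamma+\tfrac{\gamma^2}{2})\sum_{k=0}^{K_f-1}(Q_u^k)^\top(Q_{uu}^k)^{-1}Q_u^k$.

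The second ingredient is controlling the discrepancy between this \emph{predicted} reduction and the \emph{actual} reduction $\bm J^{(l)}-\bm J^{(l-1)}$ obtained by forward-simulating the true nonlinear dynamics \eqref{gpcX} under the new control. Here I would invoke Assumption \ref{assgpc}: differentiability up to third order on the compact convex set $\mathcal{R}$ gives uniform bounds on the neglected Taylor remainders of $\textbf{f}$, $\bm L$, $\bm F$, so the forward rollout under $u(t_k)=\bar u(t_k)+\gamma\ell(t_k)+\Sigma(t_k)\delta\textbf{X}(t_k)$ stays within $O(\gamma)$ of the nominal, and the actual cost satisfies $\bm J^{(l)}-\bm J^{(l-1)} = -\gamma\,\mathcal{Q}+O(\gamma^2)$ where $\mathcal{Q}=\sum_k (Q_u^k)^\top (Q_{uu}^k)^{-1}Q_u^k \ge 0$ by positive-definiteness of $Q_{uu}$. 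The statement "when $\gamma$ is small enough" is precisely the regime in which these higher-order remainders are dominated, so that the equality in the theorem holds in the asymptotic-in-$\gamma$ sense used throughout DDP convergence proofs (cf.\ \cite{liao000,ddp0}); I would make this precise by writing the actual reduction as the predicted one plus a remainder that is $o(\gamma)$ uniformly on $\mathcal{R}$, and noting the line search in \eqref{newu} always finds such a $\gamma$.

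Global convergence to a stationary point then follows by a standard argument: the sequence $\{\bm J^{(l)}\}$ is monotonically non-increasing (each accepted step has $\bm J^{(l)}\le \bm J^{(l-1)}$ since $\mathcal{Q}\ge0$ and $-\gamma+\gamma^2/2<0$ for $0<\gamma<2$) and bounded below because $\bm L,\bm F$ are continuous on the compact set $\mathcal{R}$; hence $\bm J^{(l)}$ converges and the per-iteration decrements tend to zero, forcing $\mathcal{Q}\to0$, i.e.\ $Q_u^k\to0$ for every $k$ (using that $(Q_{uu}^k)^{-1}$ is uniformly bounded below by Assumption \ref{assgpc}(iii) together with continuity on $\mathcal{R}$). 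Since $Q_u^k=0$ for all $k$ is exactly the first-order optimality (Pontryagin/DDP stationarity) condition for \eqref{gpcstochoptprob3}, the limit is a stationary solution, and independence from the initialization is immediate because the monotone-descent argument used no property of the starting trajectory beyond lying in $\mathcal{R}$.

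The main obstacle I anticipate is the bookkeeping in the first paragraph: one must verify that when $\delta u$ in \eqref{newu} carries the factor $\gamma$ only on the feedforward term $\ell$ (and \emph{not} on the feedback term $\Sigma\,\delta\textbf{X}$), the matching of zero-, first-, and second-order terms in $\delta\textbf{X}$ really does reproduce \eqref{ric} with the stated coefficient $(-\tfrac12\gamma^2+\gamma)$ on $(Q_u^k)^\top\ell(t_k)$, and that the cross terms $\Sigma^\top Q_{uu}\ell$, $\Sigma^\top Q_u$, $Q_{\textbf{x}u}\ell$ reorganize correctly into $V_\textbf{x}^k$ without leaking into $V^k$. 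This is a finite, mechanical computation, but it is the place where an error in the $\gamma$-dependence would propagate into a wrong cost-reduction formula; everything downstream (monotonicity, boundedness, $Q_u\to0$) is then routine.
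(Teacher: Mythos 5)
Your proposal is correct in outline and follows essentially the route the paper itself points to: the paper omits this proof entirely, stating only that it is ``similar to \cite{liao000}'' because the gPC-transformed problem is deterministic, and your sketch is a faithful reconstruction of that standard Liao--Shoemaker argument. The telescoping of \eqref{ric} with $\delta\textbf{X}^0=0$ does yield $V(t_0)=\bm J^{(l-1)}+(-\gamma+\tfrac{\gamma^2}{2})\sum_k(Q_u^k)^\top(Q_{uu}^k)^{-1}Q_u^k$ as the predicted new cost, the sign bookkeeping with $\ell=-Q_{uu}^{-1}Q_u$ works out as you state, and the monotone-descent/boundedness argument for global convergence is the intended one. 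The only step I would press you on is the final implication ``per-iteration decrements $\to 0$ forces $\mathcal{Q}\to 0$'': this requires showing that the accepted line-search parameter $\gamma$ is bounded away from zero whenever $\mathcal{Q}$ is bounded away from zero (an Armijo-type lower bound on the step, which follows from the uniform third-order bounds on $\mathcal{R}$), not merely that some acceptable $\gamma$ exists at each iteration; you gesture at this but do not nail it down. With that quantifier made explicit, the argument is complete and matches the reference the paper defers to.
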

\begin{proof}
 Since the cost function and dynamic constraints in \eqref{gpcstochoptprob3} are deterministic, the proof is similar to \cite{liao000} and is omitted due to page restriction.
\end{proof}
\begin{theorem}\label{thconv}
Suppose Assumption \ref{assgpc} holds. Let also $U^*=((u^{*0})^\top,(u^{*1})^\top,...)^\top$ denote the stationary point gPC-DDP converges to and $U^{(l)}$ denote the controls obtained at iteration ``$l$". Then, $\exists$ $c>0$ and $l$ large enough, such that the following holds
\begin{equation}
\label{convrate}
||U^{(l)}-U^*||\leq c||U^{(l-1)}-U^*||^2.
\end{equation}
Therefore, the convergence rate will be locally quadratic.
\end{theorem}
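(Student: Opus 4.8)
The plan is to exploit the classical equivalence, near a stationary point, between Differential Dynamic Programming and Newton's method on a reduced objective. Because the dynamics $\dot{\textbf X}=\textbf f(\textbf X,u,t)$ are deterministic and the initial condition $\textbf X^0$ is fixed, the whole discrete trajectory $\textbf X^0,\textbf X^1,\dots$ is a function of the stacked control sequence $U=((u^0)^\top,(u^1)^\top,\dots)^\top$, so the total cost may be regarded as a reduced function $\bm J(U)$. By Assumption~\ref{assgpc}(i) this map is three times continuously differentiable on a compact convex neighbourhood of $U^*$, hence $\nabla_U\bm J$ and $\nabla^2_U\bm J$ are Lipschitz there. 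First I would record that $\nabla_U\bm J(U^*)=0$ by definition of a stationary point, and that $\nabla^2_U\bm J(U^*)$ is positive definite: the backward recursion \eqref{ric} is a block Gaussian-elimination factorisation of $\nabla^2_U\bm J$ whose pivot blocks are exactly the matrices $Q_{uu}^k$ of \eqref{Qddp}, which stay positive definite by Assumption~\ref{assgpc}(iii). Consequently the Newton iteration $U\mapsto U-(\nabla^2_U\bm J(U))^{-1}\nabla_U\bm J(U)$ is well defined near $U^*$ and, by the standard estimate using $\nabla_U\bm J(U^*)=0$ and Lipschitz continuity of $\nabla^2_U\bm J$, converges quadratically.

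The core step is to show that one gPC-DDP iteration reproduces this Newton step up to an error of order $\|U^{(l-1)}-U^*\|^2$. Near the stationary point the line search accepts the full step $\gamma=1$: by the preceding global-convergence theorem the iterates enter an arbitrarily small neighbourhood of $U^*$, and there the predicted decrease $(-\gamma+\tfrac{\gamma^2}{2})\sum_k(Q_u^k)^\top(Q_{uu}^k)^{-1}Q_u^k$ is negative for $\gamma=1$ and, since the quadratic model is asymptotically exact, is actually realised. With $\gamma=1$, substituting the control law \eqref{d13} into the expansion \eqref{deltax} and unrolling the forward pass shows that the increments $\delta u^k=\ell^k+\Sigma^k\delta\textbf X^k$ coincide with the minimiser of the quadratic model of $\bm J$ assembled from \eqref{Qddp}--\eqref{ric}, i.e. with the solution of $\nabla^2_U\bm J(U^{(l-1)})\,\delta U=-\nabla_U\bm J(U^{(l-1)})$, up to the contribution of the genuinely higher-order terms that are dropped when the true dynamics are replaced by \eqref{deltax}. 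Those terms are quadratic (or higher) in $\delta\textbf X^k$, and since $\|\delta\textbf X^k\|=O(\|\delta U\|)=O(\|U^{(l-1)}-U^*\|)$ uniformly on the compact set $\mathcal{R}$, the gap between the gPC-DDP increment $\delta U^{(l)}$ and the Newton increment is $O(\|U^{(l-1)}-U^*\|^2)$.

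Finally I would combine the two facts. Writing $U^{(l)}-U^*=(U^{(l-1)}-U^*)+\delta U^{(l)}$ with $\delta U^{(l)}=-(\nabla^2_U\bm J(U^{(l-1)}))^{-1}\nabla_U\bm J(U^{(l-1)})+O(\|U^{(l-1)}-U^*\|^2)$, a Taylor expansion of $\nabla_U\bm J$ about $U^*$ (using $\nabla_U\bm J(U^*)=0$), boundedness of $(\nabla^2_U\bm J)^{-1}$ near $U^*$ and Lipschitz continuity of $\nabla^2_U\bm J$ give $\|U^{(l)}-U^*\|\le c\|U^{(l-1)}-U^*\|^2$ for some $c>0$ and all $l$ large enough, which is \eqref{convrate}. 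The main obstacle is the middle step: rigorously verifying that the Riccati recursion \eqref{ric} together with the gains $\ell^k,\Sigma^k$ really implements the block-factorised Newton step on $\bm J(U)$, and controlling the neglected dynamics terms with bounds that are uniform over $\mathcal{R}$; everything else is the textbook local analysis of Newton's method.
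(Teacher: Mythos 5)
Your route is genuinely different from the paper's. The paper never forms the reduced Hessian $\nabla^2_U\bm J$ at all: it fixes $\gamma=1$, writes out every DDP quantity explicitly for a two\mbox{-}stage horizon, Taylor\mbox{-}expands the stationarity conditions $\nabla_{u^k}\bm J|_*=0$ about the nominal trajectory, and shows by direct substitution (eqs.~\eqref{expr_19}--\eqref{expr_26}) that $u^k-u^{*k}=O(\|\bar U-U^*\|^2)$; it then extends to arbitrary horizons by induction, using Bellman's principle to reduce an $(H+1)$-stage problem to an $H$-stage one and a Newton/Lipschitz estimate only for the last stage. Your proposal instead argues ``DDP $=$ Newton on $\bm J(U)$ up to $O(\|U^{(l-1)}-U^*\|^2)$'' and then invokes the textbook local analysis. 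That strategy is viable in principle (it is the Murray--Yakowitz/Pantoja line of argument), but as written it has a real gap.

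The gap is the step you yourself flag as ``the main obstacle'': the claim that the backward recursion \eqref{Qddp}--\eqref{ric} is a block Gaussian elimination of $\nabla^2_U\bm J$ with pivot blocks \emph{exactly} $Q_{uu}^k$. Away from $U^*$ this is false, and for a reason different from the one you identify. The discrepancy between the DDP step and the Newton step is not primarily the third\mbox{-}order remainder dropped from \eqref{deltax}; it is that DDP weights the dynamics Hessians $\nabla_{\textbf{xx}}[\textbf f]_i$, $\nabla_{uu}[\textbf f]_i$ by the value\mbox{-}function gradient $V_{\textbf x}^{k+1}$, which through \eqref{ric} already contains the downstream correction terms $Q_{\textbf xu}\ell$ and $\Sigma^\top Q_u$, whereas the exact reduced Hessian weights them by the costate propagated along the \emph{unperturbed} nominal trajectory. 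The two differ by $O(\|Q_u\|)=O(\|U^{(l-1)}-U^*\|)$, so DDP's ``Hessian'' is only an $O(\|U^{(l-1)}-U^*\|)$ perturbation of $\nabla^2_U\bm J(U^{(l-1)})$. That perturbation is ultimately benign --- it contributes $O(\|U^{(l-1)}-U^*\|^2)$ to the step, which is exactly what \eqref{convrate} tolerates --- but establishing this is the entire technical content of the theorem, and it is precisely what the paper's expansions \eqref{expr_19}, \eqref{expr_24_new} and \eqref{expr_26} carry out (note how $Q_{uu}^0$ and $Q_u^0$ are rewritten there by eliminating the $Q_{\textbf xu}^1(Q_{uu}^1)^{-1}Q_u^1$ terms modulo $O(\|\bar U-U^*\|^2)$). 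A corollary of the same misidentification: your assertion that Assumption \ref{assgpc}(iii) makes the DDP pivots coincide with those of $\nabla^2_U\bm J$ is only valid \emph{at} $U^*$, where $\ell^k=0$; that suffices for positive definiteness of $\nabla^2_U\bm J(U^*)$ by continuity, but it cannot be used verbatim along the iterates. Until the ``DDP step $=$ Newton step $+\,O(\|U^{(l-1)}-U^*\|^2)$'' identity is actually proved --- by an explicit stagewise computation of the kind the paper performs, or by quoting and adapting a Pantoja\mbox{-}type factorization result for the multivariable, terminal\mbox{-}cost setting of \eqref{gpcstochoptprob3} --- the argument is a plan rather than a proof.
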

\begin{proof}
The proof is in Appendix \ref{appconv}.
\end{proof}

\begin{remark}
The main assumption required by the analysis is that $Q_{uu}^k$ remains positive definite $\forall k$. When this is not ensured a priori, one can use a shift parameter $\theta>0$ and set \cite{Nocedal}
\begin{equation*}
Q_{uu}^k\leftarrow Q_{uu}^k+\theta I,
\end{equation*}
where $\theta$ is selected large enough so that the required condition is enforced. An adaptive method of picking $\theta$ was also proposed in \cite{ddp0}.
\end{remark}

\subsection{Practical implementation of gPC-DDP} \label{gggb}
In the propagation and linearization phases of gPC-DDP, one has to compute integrals of the form $\int_{\mathcal{D}}\mathsf f(\xi)\rho(\xi)\rd\xi$, where $\mathsf f\in L_\rho^2$ (e.g., in \eqref{ccc}, \eqref{jacquad}, \eqref{hessf} or \eqref{kaioken}). These quantities can rarely be calculated analytically; except if linear or sufficiently simple dynamics are considered. In light of this, numerical methods have to be utilized. Note that expressions of this type can be viewed as expectations. Hence, one could sample points from $\rho(\xi)$ and compute a Monte Carlo approximation. Unfortunately, a great number of function evaluations is usually required to get a good estimate. To reduce computational complexity, one can instead use Gaussian quadrature (also called Gaussian cubature when $\xi$ is multi-dimensional). Based on the corresponding density function $\rho(\xi)$, properly selected nodes $z_{l_i}$ and weights $w_{l_i}$ can be determined such that
\begin{equation}
\label{gaussquad}
\int_{\mathcal{D}_1}^...\int_{\mathcal{D}_d}\mathsf f(\xi)\rho(\xi)\rd\xi\approx\sum_{l_1=1}^{l_{gq}}...\sum_{l_d=1}^{l_{gq}}w_{l_1}...w_{l_d}\mathsf f(z_{l_1},...,z_{l_d}),
\end{equation}
where $\xi=(\xi_1,...,\xi_d)^\top$ and $\xi_i,z_i\in\mathbb{R}$. The sets of nodes and weights can be obtained by solving an eigenvalue decomposition problem \cite{quad}. One important aspect is that the above formula holds with equality when the integrand is a polynomial of degree less than or equal to $2l_{gq}-1$. Since we consider smooth dynamics in this paper, the particular integration method is expected to be highly accurate. Lastly, note that this full-tensor method requires $l_{gq}^d$ function evaluations. When the set of random inputs is large, one can use sparse quadrature formulas, as presented in \cite{smol}, to reduce the number of nodes $z_{l_i}$ while retaining sufficient precision.

It is therefore deduced that implementing the propagation and linearization phases of gPC-DDP can be readily done by evaluating expressions of the nominal dynamics at the different quadrature nodes. Before moving to the simulations section, we summarize the most important attributes of gPC-DDP that are not offered by conventional stochastic trajectory-optimization methods.
\begin{itemize}
\item We derive our framework by relying on the Differential Dynamic Programming method. An iterative, scalable and fast-converging scheme is developed that is capable of controlling the density of the states. Specifically, desired trajectories for the expected states can be defined, while also affecting higher order moments. Moreover, no particular assumptions have to be made about the structure of the system dynamics. We only require differentiability of $f$ with respect to $x$ and $u$ in order to perform linearizations.
\item gPC-DDP employs Polynomial Chaos theory to represent stochasticity. Consequently, multiple types of uncertainty in the parameters and initial states can be handled, as the ones presented in Table \ref{tab:ask}. In case some random inputs follow arbitrary probability distributions, one can build proper orthogonal polynomials by using the Stieltjes procedure \cite{gpca} and incorporate them in our framework.
\item When mechanical systems are considered, Variational Integrators can be designed to obtain faithful discrete representations. This can further increase the numerical accuracy of our methodology and, as shown in the simulations section, reduce its computational complexity by allowing coarse discretizations of the time horizon.
\end{itemize}

\section{Simulated examples}\label{secsim}
First, a low dimensional system is considered, namely the Duffing oscillator. This will allow us to illustrate the algorithm in detail and make some simplifications when applying the gPC expansions. Subsequently, we apply gPC-DDP on a quadrotor to highlight the broad applicability of the framework.

\subsection{Example: Duffing oscillator}
The dynamics of the Duffing oscillator are given by
\begin{equation}
\label{duff}
\dot{x}=\begin{pmatrix} \dot{x}_1\\\dot{x}_2\end{pmatrix}=\begin{pmatrix}x_2\\-\lambda x_1-\frac{1}{4}x_2-x_1^3+u\end{pmatrix}.
\end{equation}
The uncertainty of this system lies in the parameter $\lambda$, as well as the initial state $x_1(t_0)$. We consider these quantities to be normally distributed with $\lambda\sim\mathcal{N}(\mu_\lambda,\sigma_\lambda^2)$ and $x_1(t_0)\sim\mathcal{N}(\mu_1^0,(\sigma_1^0)^2)$. Based on Table \ref{tab:ask}, Hermite polynomials will be employed. The first few (unnormalized) Hermite polynomials are given by \cite{bookxiu}
\begin{equation}
\label{herm}
\phi_0(\mathsf x)=1,\quad\phi_1(\mathsf x)=\mathsf x,\quad\phi_2(\mathsf x)=\mathsf x^2-1,...,\quad\mathsf x\in\mathbb{R},
\end{equation}
Furthermore, some important expressions are
\begin{equation*}
\rho(\mathsf x)=\frac{1}{\sqrt{2\pi}}\exp^{-\mathsf x^2/2},\quad\langle\phi_i,\phi_j\rangle=\delta_{ij}i!,
\end{equation*}
\begin{equation*}
\langle\phi_i,\phi_j,\phi_g\rangle = \begin{cases}
\frac{g!j!i!}{(s-g)!(s-j)!(s-i)!}, &i,j,g\text{ even $\&$ $s\geq\max(i,j,g)$}\\
0, &\text{otherwise}
\end{cases}
\end{equation*}
where $s=\frac{i+j+g}{2}$.

By applying \eqref{gpcl}, \eqref{gpcx0}, $\lambda$ and $x_1(t_0)$ can be fully described as
\begin{equation}
\label{lam}
\lambda(\xi^p)=\mu_\lambda\phi_0^p(\xi^p)+\sigma_\lambda\phi_1^p(\xi^p),
\end{equation}
\begin{equation}
\label{x10gpc}
x_1^0(\xi^0)=\mu_1^0\phi_0^0(\xi^0)+\sigma_1^0\phi_1^0(\xi^0),
\end{equation}
where $\xi^p$, $\xi^0$ are standard Normal variables (i.e., $\xi^p,\xi^0\sim\mathcal{N}(0,1)$). In this setting, the states are  influenced by $\xi=(\xi^p,\xi^0)\in\mathbb{R}^2$. Applying the gPC expansion on the state vector yields
\begin{equation}
\label{gpcd}
x_1(t,\xi)\approx\sum_{j=0}^K x_{1j}(t)\phi_j(\xi),\hspace{4mm}x_2(t,\xi)\approx\sum_{j=0}^K x_{2j}(t)\phi_j(\xi),
\end{equation}
with
\begin{equation}
\label{gpcr}
\phi_j(\xi)=\phi^p_{j_1}(\xi^p)\phi_{j_2}^0(\xi^0), \quad 0\leq j_1,j_2\leq r,
\end{equation}
where $r$ is the maximum order of each univariate orthogonal polynomial. Since $x_2(t_0)$ is deterministic, we take $x_{20}(t_0)=x_2(t_0)$ and $x_{2j}(t_0)=0$ for $j>0$. Plugging \eqref{lam}, \eqref{gpcd} in \eqref{duff} and performing Galerkin projection as in \eqref{ccc} gives
\begin{equation}
\label{gpcduff}
\begin{split}
&\dot{x}_{1k}=x_{2k},\\
&\dot{x}_{2k}=\frac{1}{\langle\phi_k,\phi_k\rangle}\bigg(-\sum_{i=0}^Kx_{1i}\big(\mu_\lambda\langle\phi_0^p,\phi_k,\phi_i\rangle+\sigma_\lambda\langle\phi_1^p,\phi_k,\phi_i\rangle\big)-\\
&\frac{1}{4}x_{2k}\langle\phi_k,\phi_k\rangle-\sum_{i=0}^K\sum_{g=0}^K\sum_{j=0}^Kx_{1i}x_{1g}x_{1j}\langle\phi_k,\phi_i,\phi_g,\phi_j\rangle+\langle\phi_k\rangle u\bigg),
\end{split}
\end{equation}
for $k=0,...,K$. Note also that one can analytically compute the derivatives of the above expression, which will be required by the linearization phase of gPC-DDP.

Subsequently, the gPC-DDP algorithm is applied. We consider the case where $\mu_\lambda=3$ and $\sigma_\lambda=0.1$. For the initial states we have $\mu_1^0=4$, $\sigma_1^0=0.08$ and $x_2^0=0$. The task is to reach the target state $x^{goal}=(3,0)^\top$ in $t_f=1.8$ sec. We picked quadratic cost functions as in \eqref{gpcrunnL}, \eqref{gpctermF}, with weighting matrices for the terminal cost defined as in \eqref{gpc_Sf0}, \eqref{gpcsf0001}. Regarding the latter, the weights are set to $s_{f_{10}}=s_{f_{20}}=400$ (associated with expected states) and $s_{f_{1j}}=300$, $s_{f_{2j}}=100$, for $j>0$ (associated with variance). The running cost was set to $\bm L=\frac{1}{2}0.01u^2$. Hence, gPC-DDP will penalize trajectories with i) expected terminal states far from the desired ones, ii) high terminal variance. Fig. \ref{nnn4} shows how $||U^{(l)}-U^*||$ changes at each iteration ``$l$", with $U^*$ being the solution that gPC-DDP converges to. We initialized the algorithm by selecting zero controls (i.e., $\bar{u}(t_k)=0$, for $k=0,1,...$).

\begin{figure}[t]
	\centering
	\includegraphics[width=0.4\textwidth]{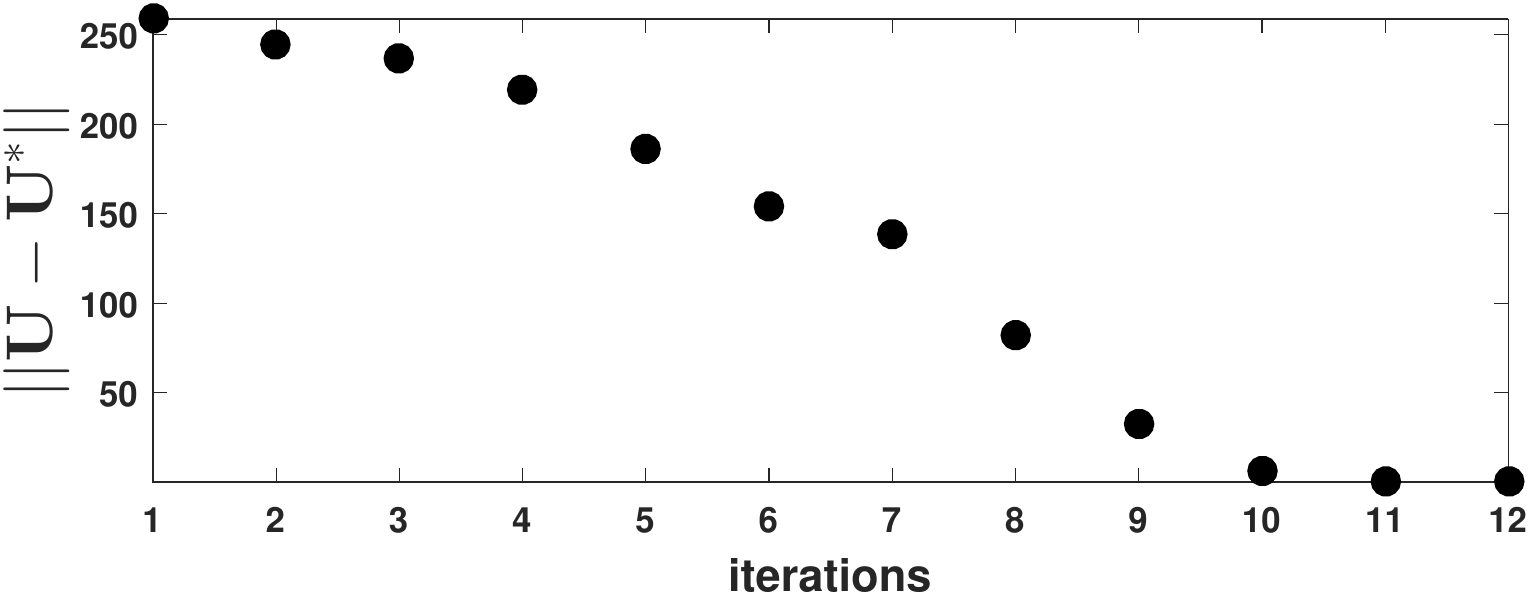}
	\caption{Duffing oscillator: Difference between the controls obtained by gPC-DDP at each iteration and the locally optimal solution the algorithm converges to.}
	\label{nnn4}
\end{figure}

\begin{figure}
	\centering
	\subfigure[The Monte Carlo and gPC estimates of the expected states are illustrated, along with $\pm3\sigma$ of sampled trajectories (green and red shaded areas). gPC-DDP is capable of guiding the expected states to the target, while also minimizing the terminal variance.]{
		\includegraphics[width=0.48\textwidth]{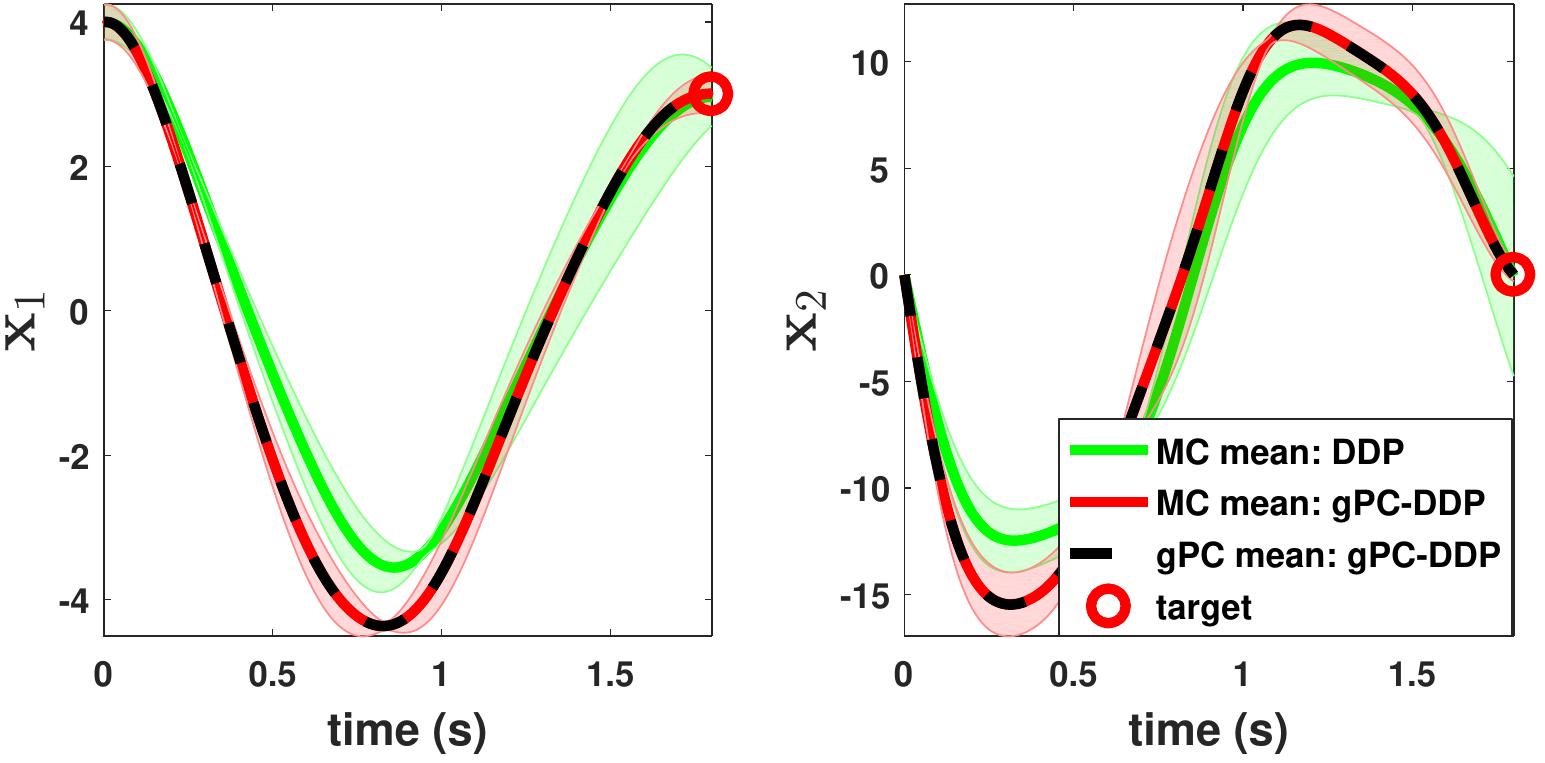}\label{nnn0}}
	\subfigure[Variance of the (sub)optimal state trajectories obtained by gPC-DDP and deterministic DDP respectively. The gPC and Monte Carlo estimates are provided.]{
		\includegraphics[width=0.4741\textwidth]{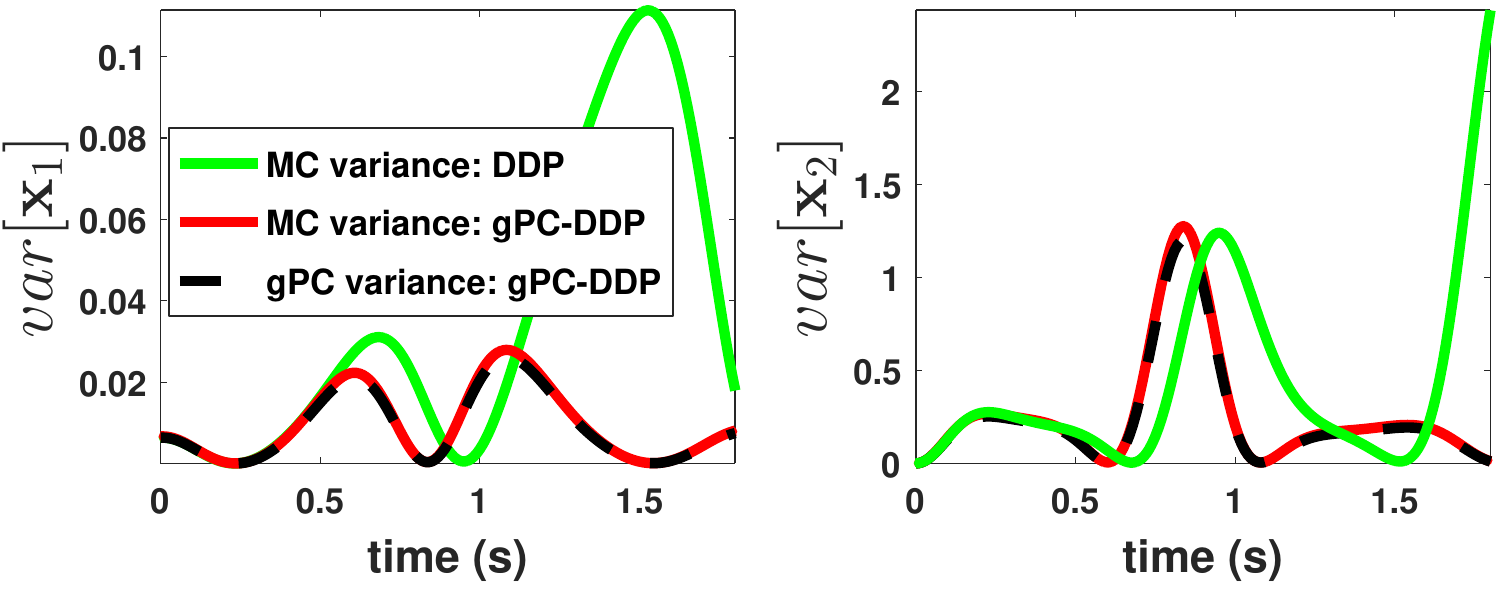}\label{nnn1}}
	\caption{Duffing oscillator: Comparison between gPC-DDP and deterministic DDP (applied for the mean parameter values).}
\end{figure}

We compare our algorithm with the original Differential Dynamic Programming method applied for the mean values of the uncertain parameters and cost functions defined as in \eqref{runnL}, \eqref{termF}. Fig. \ref{nnn0} shows the evolution of the gPC and Monte Carlo estimates of the mean, as well as $\pm 3\sigma$ of 1000 trajectories under the (locally) optimal controls. Furthermore, Fig. \ref{nnn1} depicts the gPC and Monte Carlo variance estimates of the states under the different control settings. The results clearly highlight the versatility of gPC-DDP in terms of finding trajectories that not only guide the system to the target but also reduce state uncertainty. For the gPC expansions, we selected $r=3$ terms in \eqref{gpcr} resulting in an $\bm n=20$ state vector. Due to space limitations, we do not show how the accuracy of the gPC series approximation changes with the number of expansion terms $K$. In general, the estimated moments get closer to the actual ones as $K$ increases, with the convergence rate depending on the quantity to be approximated (see discussion in section \ref{secgpc}, as well as \cite{xiu}, \cite{fisher}, \cite{bookxiu}).

For the implementation of gPC-DDP we utilized a standard Euler scheme to discretize the dynamics in \eqref{gpcduff} and performed the corresponding linearization in \eqref{deltax}. Next, we incorporate the Variational Integrator of section \ref{secvigpc} and call the obtained scheme ``VI-gPC-DDP".

The Lagrangian and non-conservative forces for the Duffing oscillator are given respectively by
\begin{equation*}
L(q,\dot{q})=\frac{1}{2}\dot{q}^2-\frac{\lambda}{2}q^2-\frac{1}{4}q^4, \quad F(q,\dot{q},u)=u-\frac{1}{4}\dot{q}.
\end{equation*}
The gPC expansions of the generalized coordinates are
\begin{equation*}
q\approx\sum_{j=0}^K q_j\phi_j(\xi),\hspace{3mm}\dot{q}\approx\sum_{j=0}^K \dot{q}_j\phi_j(\xi).
\end{equation*}
One can determine through \eqref{Lhat} and \eqref{gpcf} the Lagrangian function and non-conservative forces for the gPC coordinates. Specifically
\begin{equation*}
\begin{split}
&\hat{L}=-\frac{1}{2}\sum_{j=0}^K\sum_{g=0}^Kq_jq_g\big(\mu_\lambda\langle\phi_0^p,\phi_j,\phi_g\rangle+\sigma_\lambda\langle\phi_1^p,\phi_j,\phi_g\rangle\big)+\\
&\frac{1}{2}\sum_{j=0}^K\dot{q}_j^2\langle\phi_j,\phi_j\rangle-\frac{1}{4}\sum_{l=0}^K\sum_{h=0}^K\sum_{g=0}^K\sum_{j=0}^Kq_lq_hq_gq_j\langle\phi_l,\phi_h,\phi_g,\phi_h\rangle,
\end{split}
\end{equation*}
\begin{equation*}
\hat{F}_j=u\langle\phi_j\rangle-\frac{1}{4}\dot{q}_j\langle\phi_j,\phi_j\rangle,\quad j=0,...,K.
\end{equation*}
The discrete versions of the above expressions, $\hat{L}^d$ and $\hat{F}_{dij}^\pm$ can be determined via \eqref{lddd} and \eqref{fddd} respectively. Furthermore, the linearizations required by \eqref{dellin}, can be performed analytically.

To show the importance of a variational integration scheme, we compare the (locally) optimal trajectories given by gPC-DDP $\&$ VI-gPC-DDP for varying time steps $\Delta t$. In general, a properly discretized model will behave similarly to its continuous counterpart, even for relatively large values of $\Delta t$. In that case, we would expect that the time-step selection would induce small deviations in the solution of the optimal control problem. Fig. \ref{nnn3} shows the gPC mean estimates after implementing the algorithms on the Duffing oscillator. It can be easily deduced that utilizing Variational Integrators highly reduces the impact of $\Delta t$ on the obtained trajectories.

\begin{figure}
	\centering
	\subfigure[gPC-DDP]{\includegraphics[width=0.440000001\textwidth]{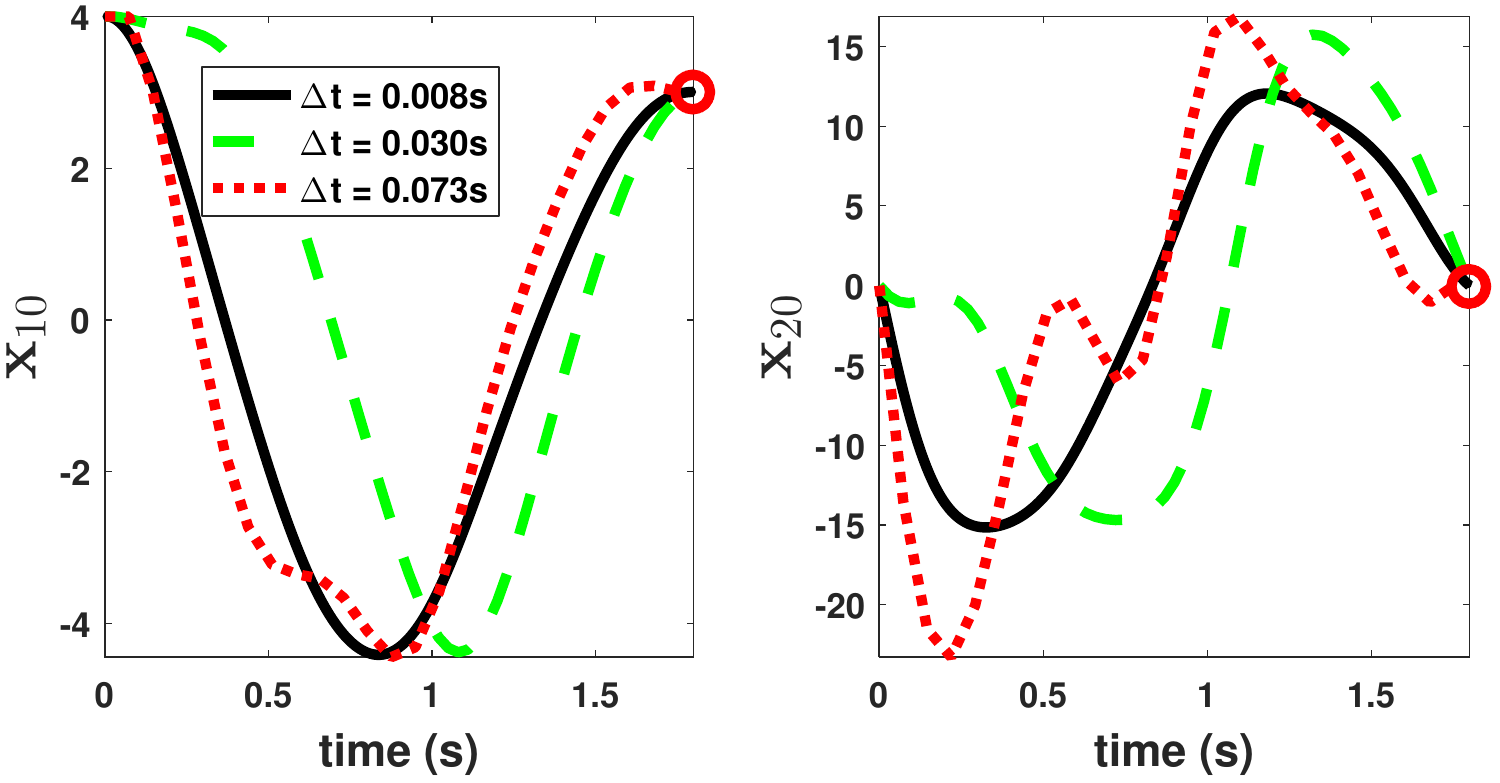}
	}
	\subfigure[VI-gPC-DDP]{
		\includegraphics[width=0.44\textwidth]{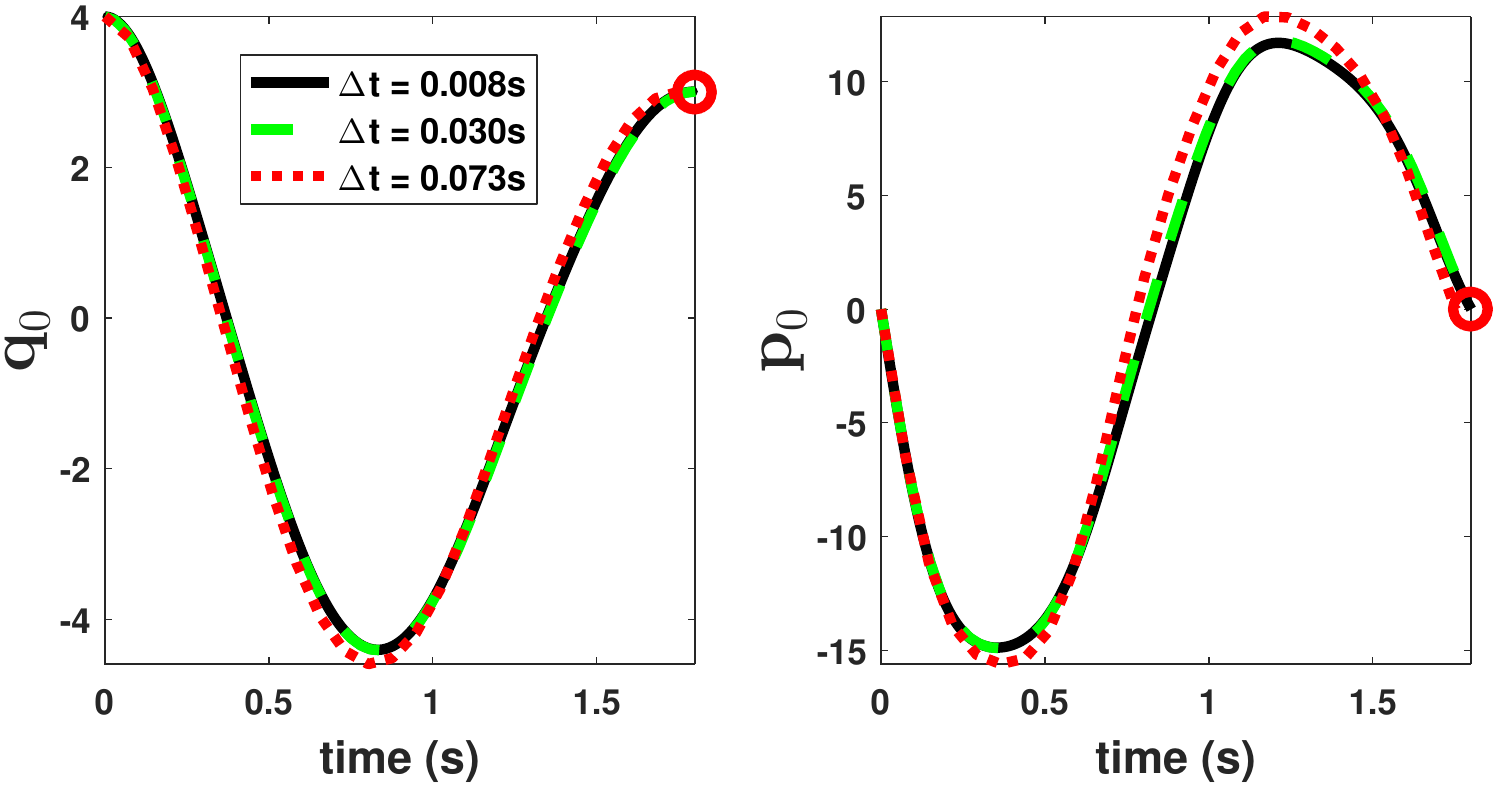}
	}
	\caption{Duffing oscillator: Comparison between gPC-DDP and VI-gPC-DDP for different step sizes $\Delta t$. For the former, an explicit Euler scheme was used to propagate and linearize the gPC-based dynamics. The gPC mean estimates are able to reach the target for all cases. However, VI-gPC-DDP is much more insensitive to the selection of $\Delta t$.}
	\label{nnn3}
\end{figure}

\subsection{Example: quadrotor}
The state vector considered here is $(\chi^\top,\eta^\top,\dot \chi^\top,\dot \eta^\top)^\top\in\mathbb{R}^{12}$, where $\chi=(x,y,z)^\top\in\mathbb{R}^3$ and $\eta=(\phi,\theta,\psi)^\top\in\mathbb{R}^3$ denote the position and Euler angles respectively (see Fig. \ref{quadill}). The Lagrangian and external forces of the system are given respectively by
\begin{figure}[b]
	\centering
	\includegraphics[width=0.19\textwidth]{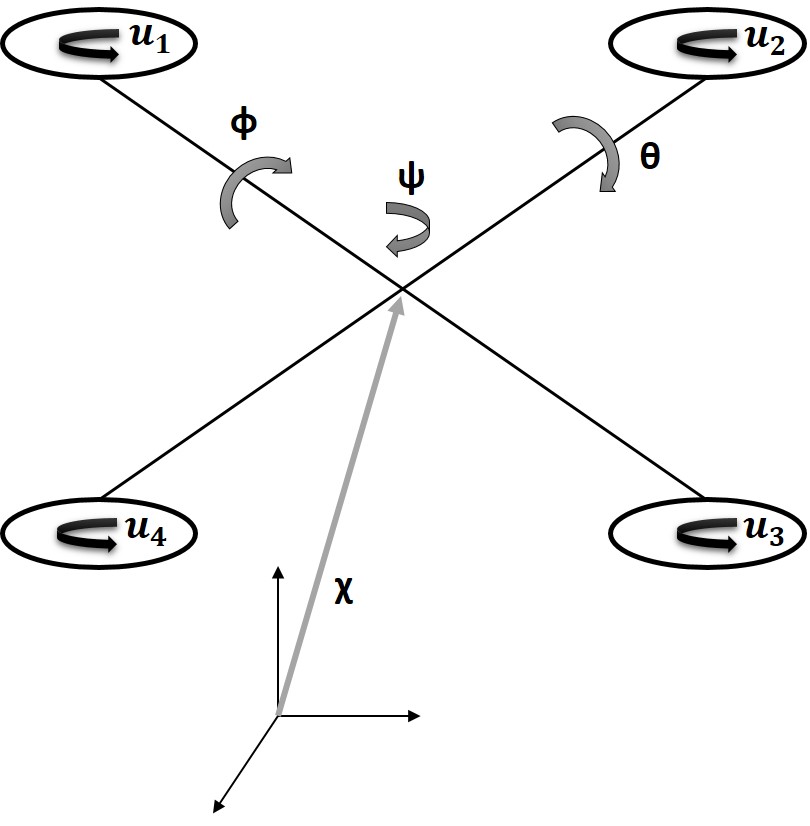}
	\caption{Illustration of the quadrotor system.}
	\label{quadill}
\end{figure}
\begin{equation*}
L=\frac{1}{2}m\dot \chi^\top\dot \chi-mgz+\frac{1}{2}\dot \eta \mathbb{J}(\eta ) \eta, \quad F=\begin{bmatrix}F_\chi\\ \tau \end{bmatrix},
\end{equation*}
\begin{equation*}
F_\chi=R(\eta)\begin{bmatrix}0\\0\\\sum_{i=1}^4u_i\end{bmatrix}, \quad \tau=\begin{bmatrix}(u_3-u_1)lG_{tr}/G_{rot}\\(u_2-u_4)lG_{tr}/G_{rot}\\u_1+u_3-u_2-u_4\end{bmatrix},
\end{equation*}
where $u_i$ are the control inputs, $l$ is the distance between the rotors and center of mass, $m$ is the mass, $g$ is the gravitational acceleration and $G_{tr}$, $G_{rot}$ are constants that depend on the air density and the shape of the rotors. Furthermore, $\mathbb{J}(\eta)=W(\eta)^\top \mathbb{I}W(\eta)$ is the Jacobian matrix from angular velocities to $\dot \eta$, with $\mathbb{I}=\text{diag}(\mathbb I_x,\mathbb I_y,\mathbb I_z)$ being the inertia matrix and $W(\eta)$ being a transformation matrix for angular velocities, from the inertial frame to the body frame. Finally, $R(\eta)$ denotes a rotation matrix from the body frame to the inertial frame. The equations of motion for this system can be found by applying the {\it Euler-Lagrange} expressions in \eqref{lagr}. Specifically, these will be
\begin{equation*}
m\ddot \chi+mg\begin{bmatrix}0\\0\\1\end{bmatrix}=F_\chi,\quad\mathbb{J}\ddot\eta+C(\eta,\dot \eta)\dot \eta=\tau,
\end{equation*}
where $C(\eta,\dot \eta)=\dot {\mathbb{J}}-\frac{1}{2}\frac{\partial (\dot \eta^\top\mathbb{J})}{\partial\eta}$. A detailed derivation of these expressions can be found in \cite{quadmod}.

Fig. \ref{par_quad} shows the parameters that were used for the simulations. We consider the case where the exact values of $G_{tr}$ and $G_{rot}$ are not known. Specifically, we assume that they are uniformly distributed with known upper and lower bounds.
\begin{figure}[b]
	\begin{minipage}{0.45\linewidth}\centering
		\captionof{table}{Deterministic parameters}
		\begin{tabular}{|c||c|}
			\hline
			\scalebox{0.7}{$g$}&\scalebox{0.7}{$9.81$ $m/s^2$}\\
			\scalebox{0.7}{$m$}&\scalebox{0.7}{$1$ $kg$}\\
			\scalebox{0.7}{$l$}&\scalebox{0.7}{$0.24$ $m$}\\
			\scalebox{0.7}{$\mathbb I_x$}&\scalebox{0.7}{$8.1\cdot 10^{-3}$ $kgm^2$}\\
			\scalebox{0.7}{$\mathbb I_y$}&\scalebox{0.7}{$8.1\cdot 10^{-3}$ $kgm^2$}\\
			\scalebox{0.7}{$\mathbb I_z$}&\scalebox{0.7}{$14.2\cdot 10^{-3}$ $kgm^2$}\\
			\hline
		\end{tabular}
	\end{minipage}
	\hspace{0.5cm}
	\begin{minipage}{0.45\linewidth}
		\centering
		\captionof{table}{Uniformly distributed, stochastic parameters}
		\begin{tabular}{|c||c|c|}
			\hline
			&min&max\\
			\hline
			\scalebox{0.7}{$G_{tr}$}&\scalebox{0.7}{$2.85\cdot10^{-5}$}&\scalebox{0.7}{$2.95\cdot 10^{-5}$}\\
			\scalebox{0.7}{$G_{rot}$}&\scalebox{0.7}{$1.05\cdot10^{-6}$}&\scalebox{0.7}{$1.15\cdot 10^{-6}$}\\
			\hline
		\end{tabular}
	\end{minipage}
	\caption{Parameter values for the quadrotor}
	\label{par_quad}
\end{figure}

Since the random parameters follow a Uniform distribution, we choose Legendre polynomials (see Table \ref{tab:ask}). Some important expressions for these (unnormalized) orthogonal polynomials are provided below \cite{bookxiu}
\begin{equation*}
\phi_0(\mathsf x)=1,\quad\phi_1(\mathsf x)=\mathsf x,\quad\phi_2(\mathsf x)=\frac{3}{2}\mathsf x^2-\frac{1}{2},...,\quad\mathsf x\in\mathbb{R},
\end{equation*}
\begin{equation*}
\rho(\mathsf x)=1,\quad\langle\phi_i,\phi_j\rangle=\frac{2}{2i+1}\delta_{ij}.
\end{equation*}
For the Polynomial Chaos expansions of the states we picked $r=2$ terms in \eqref{gpcexpansion1}. This resulted in an $\bm n=72$ state vector. Note that in this problem, the propagation and linearization phases of gPC-DDP cannot be computed analytically. Hence, Gaussian quadrature has to be used, as presented in section \ref{gggb}. In these formulas, we used $l_{gq}=3$ quadrature nodes for each uncertain parameter.

The task we consider for gPC-DDP, is reaching the target state $((\chi^{goal})^\top,(\eta^{goal})^\top,(\dot \chi^{goal})^\top,(\dot \eta^{goal})^\top)=\textbf 0_{12}$ starting from the deterministic state $(-3,3,3,\textbf0_9)$.  We picked quadratic cost functions as in \eqref{gpcrunnL}, \eqref{gpctermF}, with weighting matrices for the terminal cost defined as in \eqref{gpc_Sf0}, \eqref{gpcsf0001}. The weights associated with expected states were set to $s_{f_{i0}}=8$. The running cost was set to $\bm L=\frac{1}{2}0.1\sum_{i=1}^4u_i^2$. Finally, we initialized the algorithms by selecting zero controls (i.e., $\bar{u}_i(t_k)=0$, for $k=0,1,...$).

\begin{figure}
	\centering
	\includegraphics[width=0.489\textwidth]{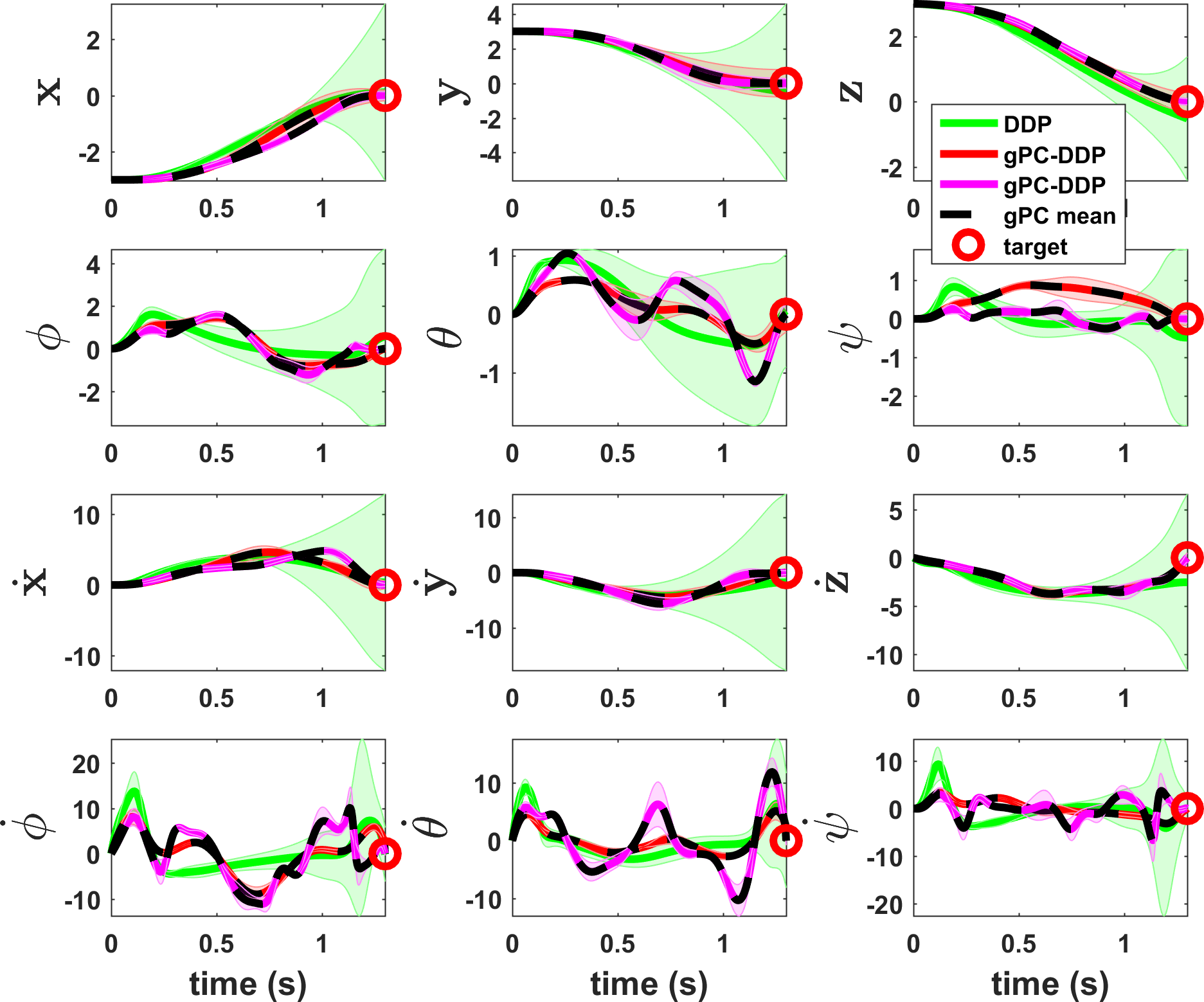}
	\caption{Quadrotor: Comparison between gPC-DDP and deterministic DDP. Regarding the former, two settings are considered with different uncertainty penalization levels (red: low, magenta: high). Solid lines represent Monte Carlo mean estimates, while black dashed lines represent gPC mean estimates. The colored shaded areas correspond to $\pm3\sigma$ of trajectories sampled under the different control sequences (units - $(x,y,z)$: $m$, $(\phi,\theta,\psi)$: rad, $(\dot{x},\dot{y},\dot{z})$: $m/s$, $(\dot \phi,\dot \theta,\dot \psi)$: rad$/s$).}
	\label{quad3}
\end{figure}
\begin{figure}
	\centering
	\subfigure{\includegraphics[width=0.13\textwidth]{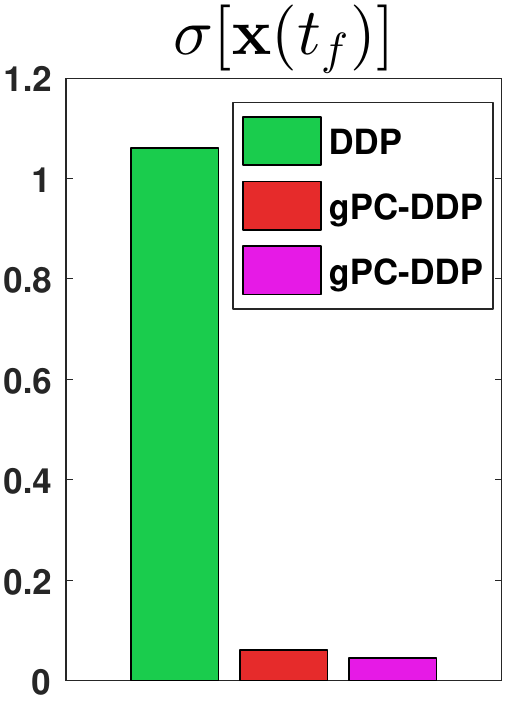}
	}
	\subfigure{
		\includegraphics[width=0.13\textwidth]{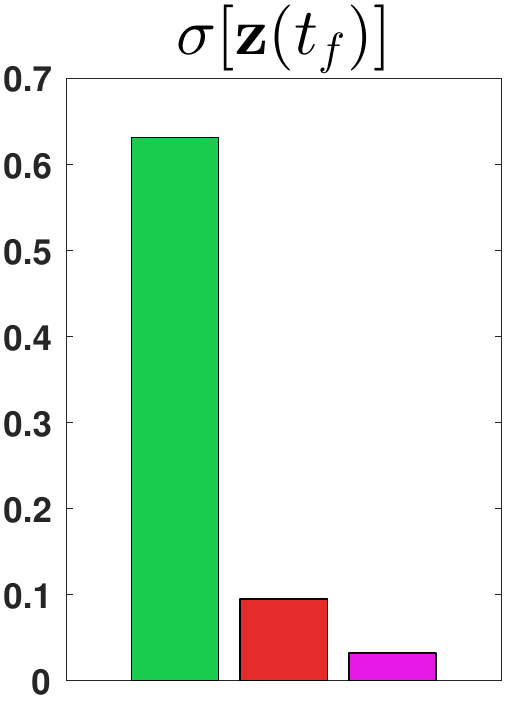}
	}
	\subfigure{
		\includegraphics[width=0.121\textwidth]{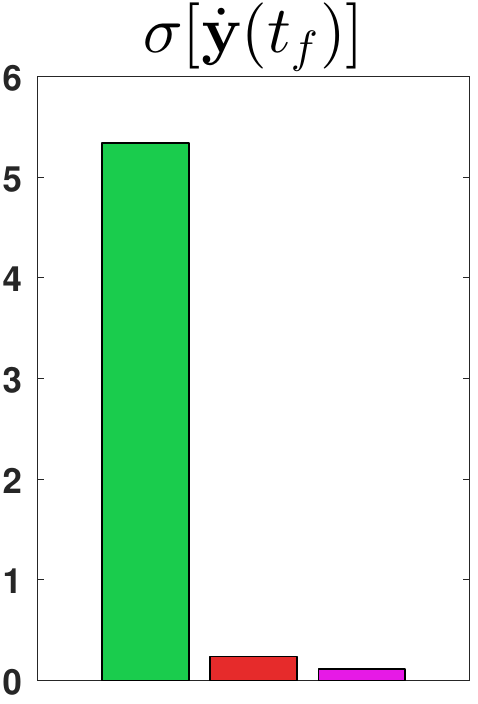}
	}
	\caption{Quadrotor: Standard deviation of $x$, $z$, $\dot y$ at the terminal time, obtained by deterministic DDP and gPC-DDP respectively. Red and magenta bars correspond to different uncertainty penalization levels (red: low, magenta: high).}
	\label{quad4}
\end{figure}
\begin{figure}
	\centering
	\includegraphics[width=0.408\textwidth]{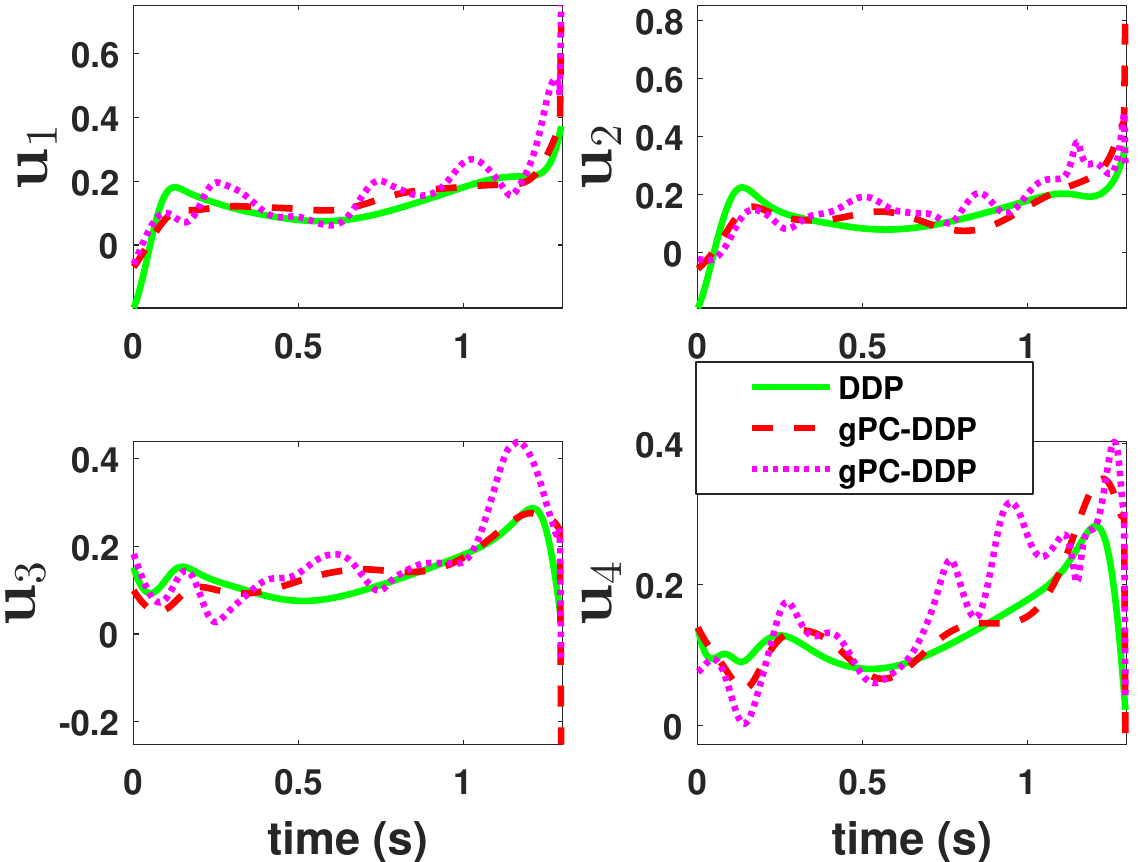}
	\caption{Quadrotor: Controls obtained by deterministic DDP and gPC-DDP for different uncertainty penalization levels (red: low, magenta: high).}
	\label{quad8}
\end{figure}

\begin{figure*}[!t]
	\centering
	\includegraphics[width=0.13\textwidth]{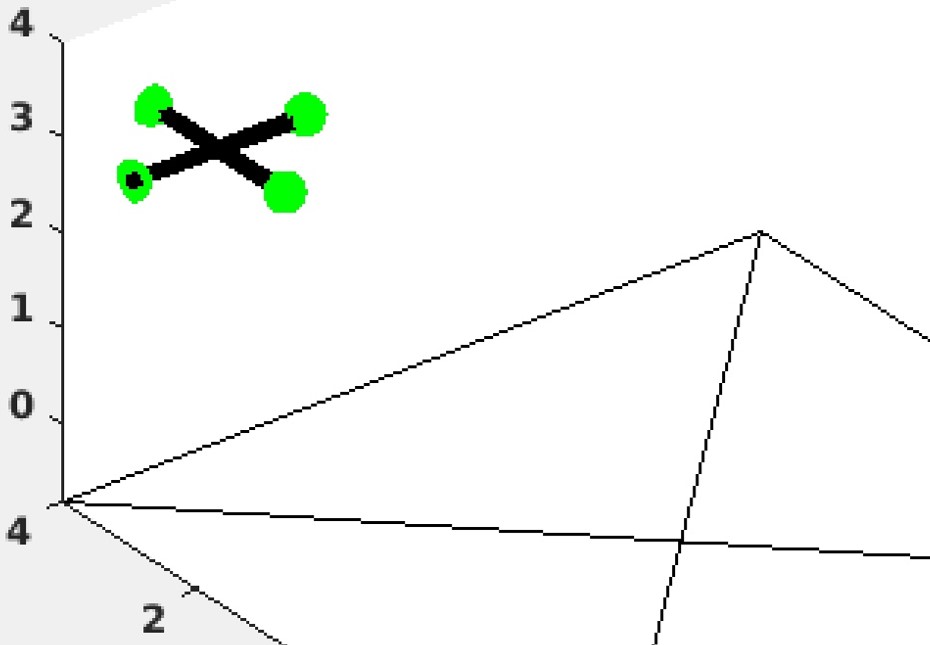}
	\includegraphics[width=0.13\textwidth]{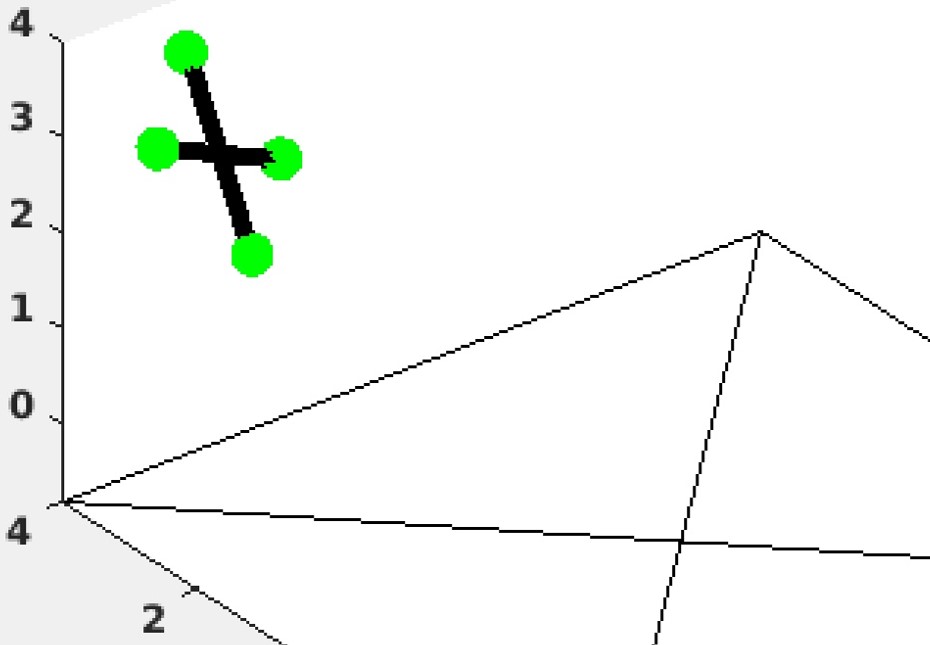}
	\includegraphics[width=0.13\textwidth]{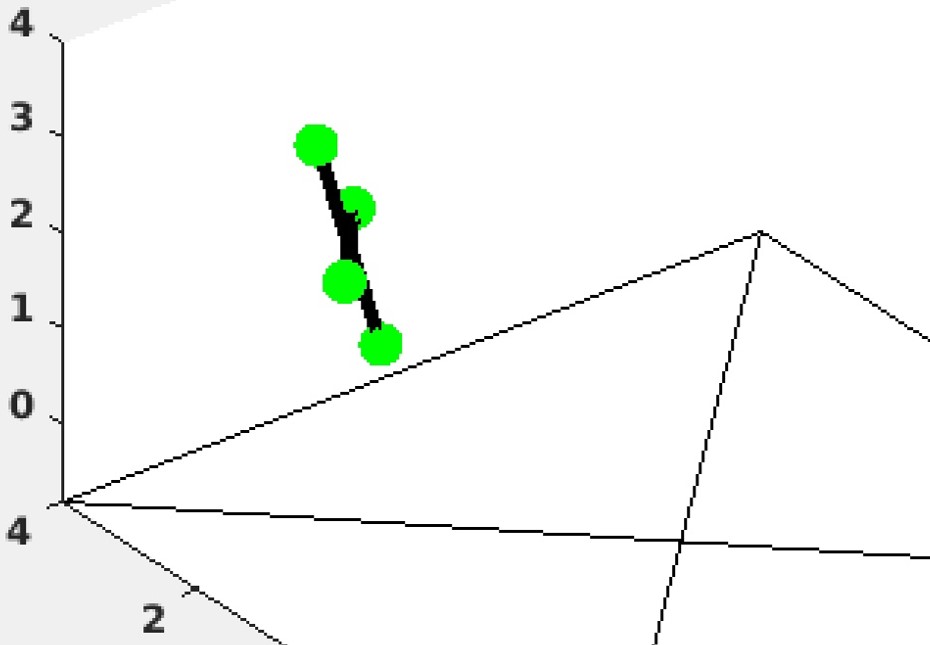}
	\includegraphics[width=0.13\textwidth]{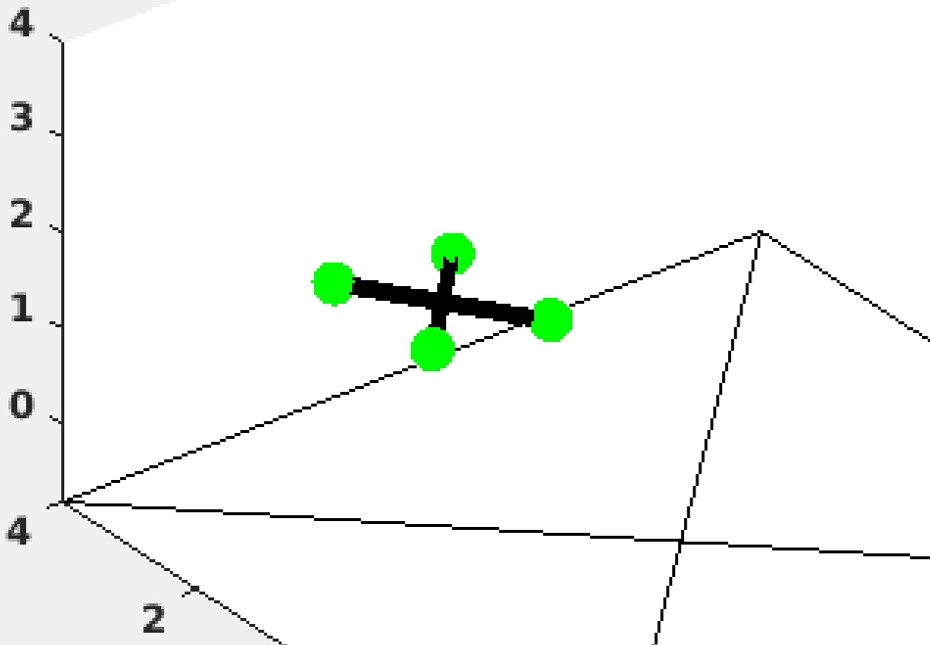}
	\includegraphics[width=0.13\textwidth]{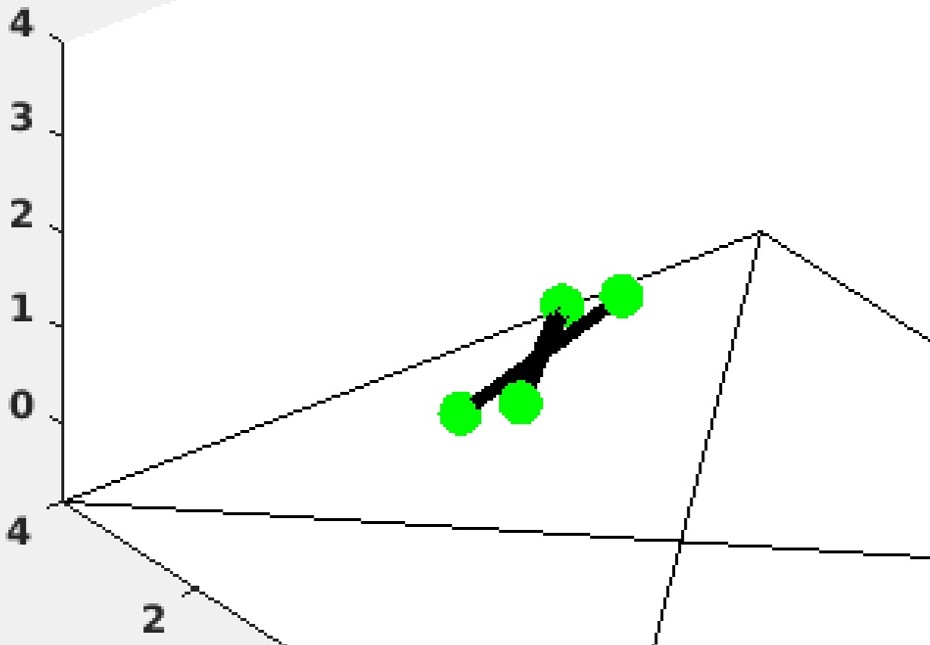}
	\includegraphics[width=0.13\textwidth]{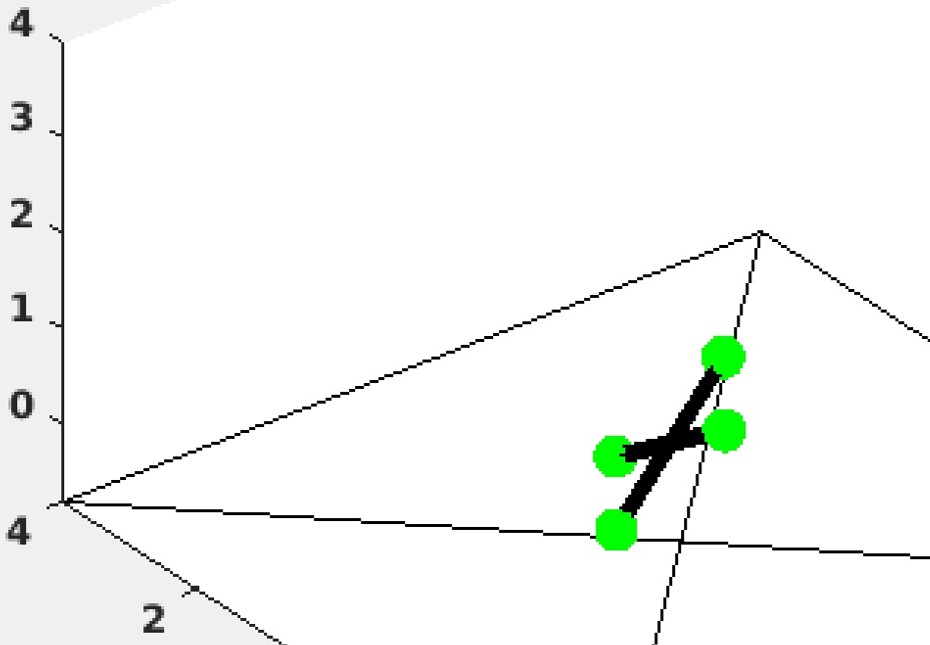}
	\includegraphics[width=0.13\textwidth]{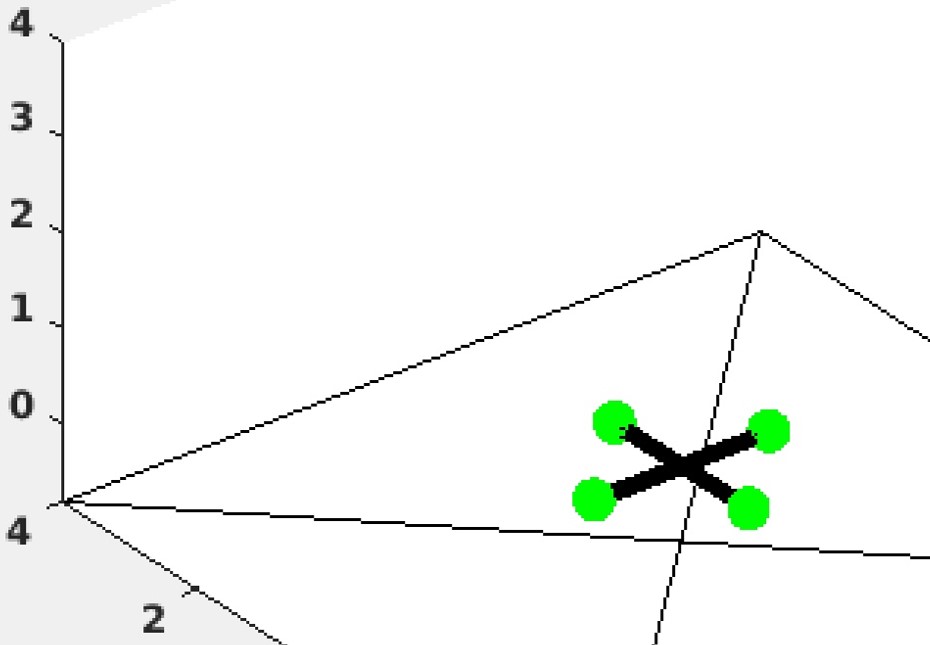}
	
	\caption{Quadrotor: Instances of the (sub)optimal, mean state trajectory obtained by gPC-DDP.}
	\label{quad_ill1}
\end{figure*}

\begin{figure}
	\centering
	\includegraphics[width=0.37\textwidth]{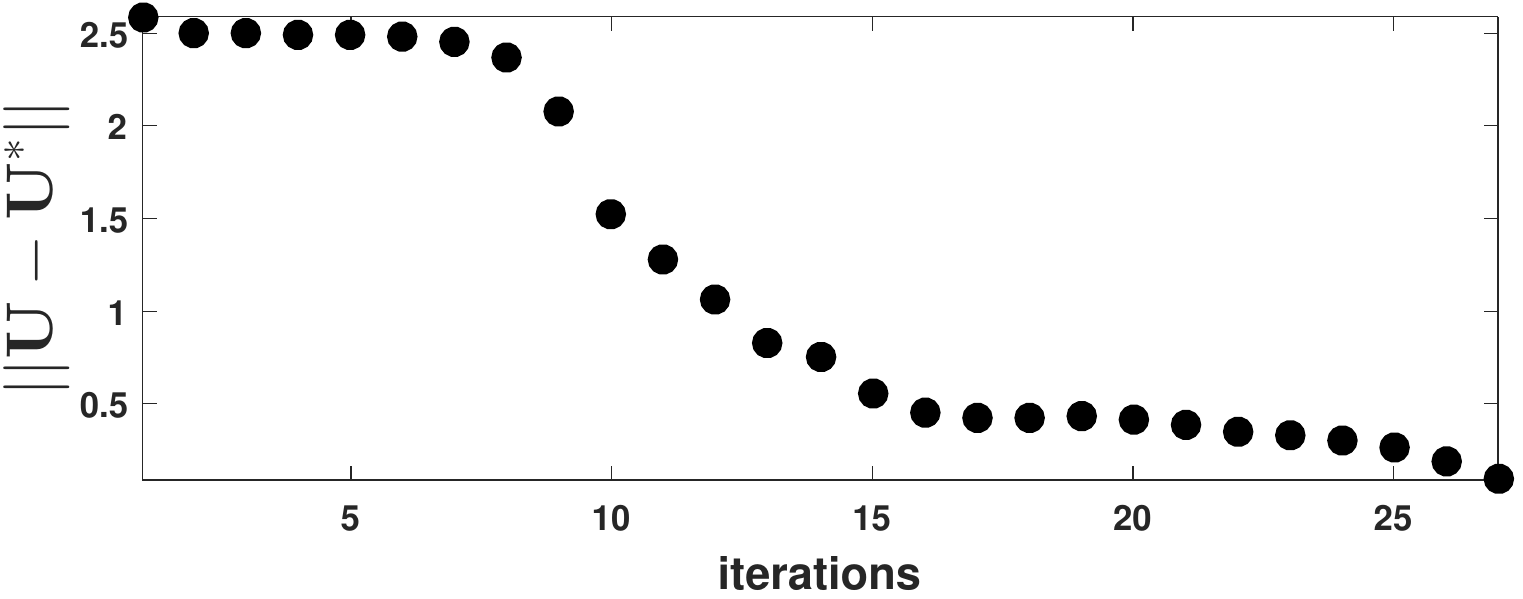}
	\caption{Quadrotor: Difference between the controls obtained by gPC-DDP at each iteration and the locally optimal solution the algorithm converges to.}
	\label{quaddu}
\end{figure}

Fig. \ref{quad3} shows the obtained state trajectories after gPC-DDP has converged. To illustrate the impact of uncertainty penalization, we consider two settings with different (variance-related) weights $s_{f_{ij}}$, $j=1,...,K$. Specifically, trajectories in red correspond to $s_{f_{ij}}=0.0013$, while those in magenta correspond to $s_{f_{ij}}=0.044$. As before, we compare our results to the original DDP algorithm applied for the mean values of $G_{tr}$ and $G_{rot}$. Fig. \ref{quad4} shows the terminal standard deviation of three states (MC estimate), while  Fig. \ref{quad8} displays the obtained control inputs under the different optimization settings. Finally, Fig. \ref{quad_ill1} provides an illustration of the quadrotor task under gPC-DDP's (sub)optimal expected trajectories, while Fig. \ref{quaddu} shows how $||U^{(l)}-U^*||$ changes at each iteration ``$l$" (both of these figures were generated for $s_{f_{ij}}=0.0013$ as the variance-related weights).

The results reveal that penalizing state uncertainty in the problem setup can be rather beneficial in trajectory-optimization tasks. It is also important to make the following observation. It was noticed during simulations that large penalization of state uncertainty generally increases the required bandwidth of the controllers. This particular behavior is depicted in Fig. \ref{quad8}; for large values of the variance-related weights, the obtained controls seem to fluctuate more frequently. Thus, one has to make a compromise between uncertainty minimization and control effort when tuning the cost functions.

Next, we incorporate our variational integration scheme. We follow the same procedure as in the Duffing oscillator example; however, obtaining closed form solutions for the gPC-based Lagrangian and non-conservative forces is impossible in this case. Hence, Gaussian quadrature has to be used to estimate the discrete version of $\hat L$ and $\hat{\textbf F}$, as well as their derivatives.

To test the effect of the VI scheme, we consider the following scenario. We solve the optimization problem by using different time steps in the problem formulation. Then, we implement the (sub)optimal controls on a discretized version of the gPC-based dynamics, with a very small discretization step ($\Delta t=0.001$). In this way, the behavior of the continuous dynamics \eqref{gpcX} can be accurately simulated. Fig. \ref{quad9} shows three of the gPC mean estimates after applying the controls obtained by gPC-DDP $\&$ VI-gPC-DDP respectively. For the former, an explicit Euler scheme was used. It can be deduced that when large time steps are employed, naive discretizations of dynamic equations behave much differently than their continuous counterpart and are inappropriate for optimal control tasks.

\begin{figure}
	\centering
	\subfigure[gPC-DDP]{\includegraphics[width=0.48\textwidth]{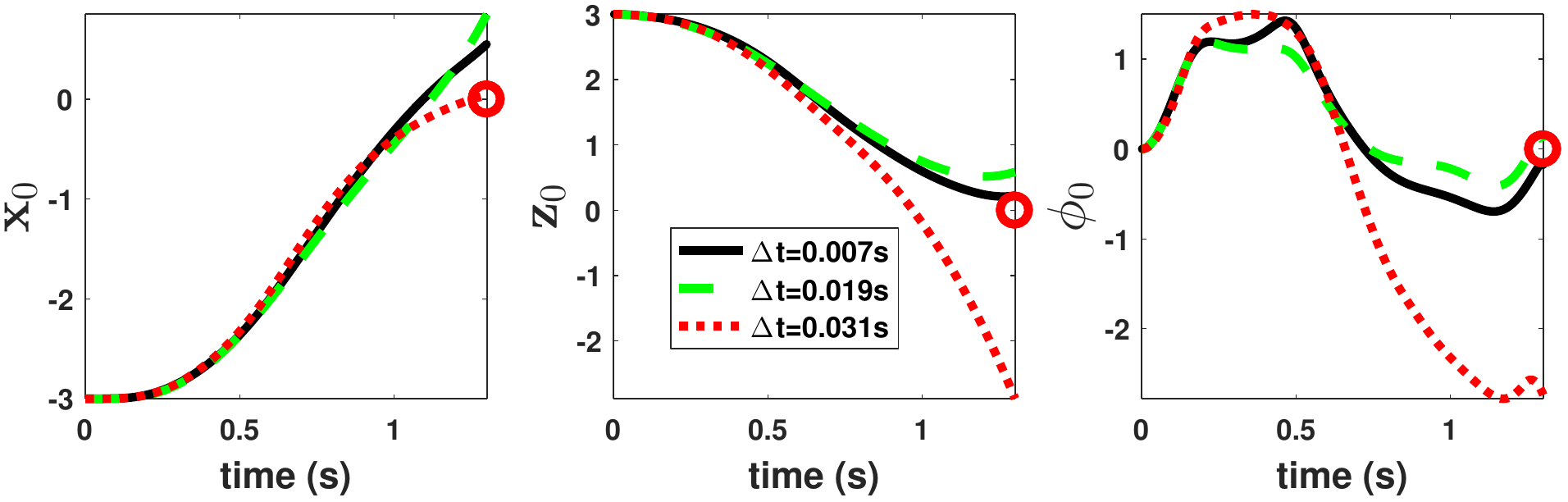}
	}
	\subfigure[VI-gPC-DDP]{\includegraphics[width=0.48\textwidth]{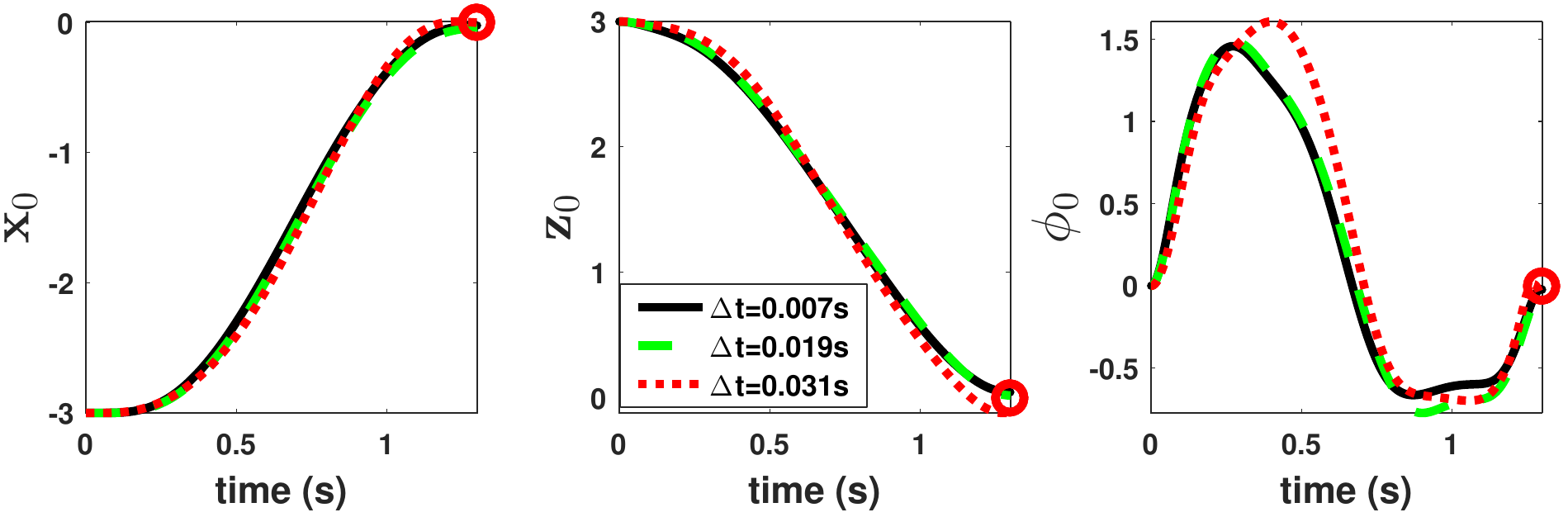}
	}
	\caption{Quadrotor: Comparison between gPC-DDP $\&$ VI-gPC-DDP. The legends show the discretization steps used in the different trajectory-optimization schemes. The plots were obtained by implementing the (locally) optimal controls on a discretized version of the gPC-based dynamics with a very small time step $\Delta t$. In this way, the actual, continuous system can be accurately simulated. Performance is significantly reduced when naive discretizations are employed.}
	\label{quad9}
\end{figure}

In fact, the ability to use a coarse discretization can significantly reduce the computational complexity of our optimal control algorithm. Fig. \ref{quad10} compares VI-gPC-DDP to gPC-DDP in terms of the average time required for the propagation phase. Note that the variational integration scheme will be slower for similar step sizes since one has to solve eq. \eqref{delg2} implicitly, which usually requires more than one iterations. However, when the discrete horizon decreases sufficiently, the overall required time is reduced.

\begin{figure}
	\centering
	\includegraphics[width=0.3\textwidth]{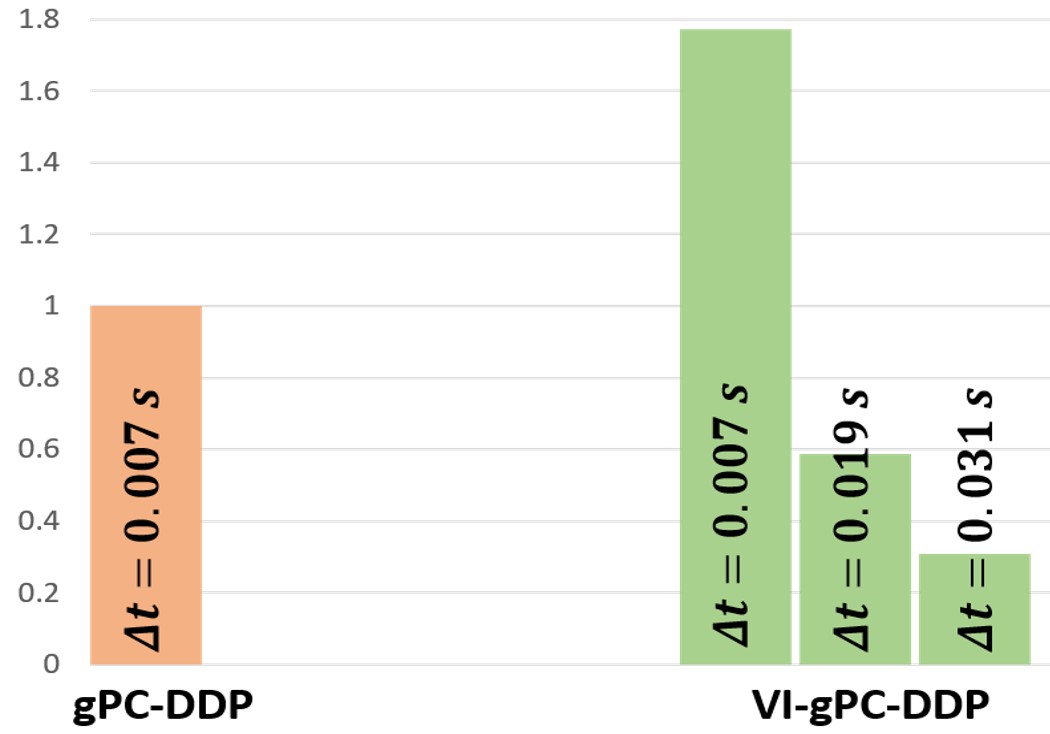}
	\caption{Quadrotor: Average elapsed time for the propagation phase of VI-gPC-DPP, implemented for different values of $\Delta t$. The results are normalized with respect to gPC-DDP's corresponding time, when a small discretization step is used. Utilizing variational integration schemes allows having a coarser time discretization and, therefore, reduces computational complexity.}
	\label{quad10}
\end{figure}

\section{Discussion}\label{secdisc}
In this section, we discuss some caveats of our approach that may reduce its applicability and propose possible extensions. First, gPC-DDP requires the system dynamics and cost function to be differentiable. If discontinuities appear in the problem formulation, one could use existing smoothing methodologies, as the ones presented in \cite{disc}.
Another  weakness of the gPC formulation is that its accuracy generally degrades over time. Moreover, estimating higher order moments often comes with greater error than estimating lower order moments \cite{fisher}. This happens because the underlying probability space is represented by a finite dimensional approximation. One can improve performance by adding more gPC coefficients in the expansions; however, this will only postpone the problem. This issue can be tackled by using multi-element methods \cite{mgpc}. This methodology can additionally deal with discontinous random inputs.

Finally, another limitation of gPC theory is that computational complexity increases with the dimensionality of the random inputs. In particular, the number of coefficients in  eq. \eqref{gpcexpansion1} increases exponentially with the dimension of the stochastic variables. Complexity increases also for full-tensor Gaussian quadrature formulas, as the one in \eqref{gaussquad}, since all possible combinations of the random parameters have to be considered in the summation. In fact, this was noticed to be the major drawback in our algorithm, since gPC-DDP uses Gaussian quadrature at each time instant both in the linearization and propagation phase. However, as explained in section \ref{gggb}, sparse quadrature formulas can be utilized to curtail the number of quadrature nodes \cite{smol}. Moreover, a sparse gPC approximation can be used for multi-dimensional gPC expansions (e.g., as the one presented in \cite{spam}) to reduce the number of coefficients. Both of these methodologies can significantly reduce computational complexity, while retaining sufficient accuracy.

\section{Conclusion}\label{secconc}
 In this paper we proposed a new methodology for stochastic trajectory optimization, for systems with uncertainty in the parameters and initial states. Polynomial Chaos played a key role since it allowed us to  handle stochasticity for a broad class of distributions. The developed optimal control scheme was based on Dynamic Programming principles and its main feature is the ability to control the statistical behavior of stochastic, high-dimensional, nonlinear systems. It was also proven that when some mild assumptions are satisfied, our iterative algorithm converges to a solution with locally quadratic convergence rates. Lastly, we showed that Variational Integrators can be formulated when Polynomial Chaos representations of mechanical systems are considered. Simulated examples verified the potential applicability of the proposed framework and highlighted the benefits of variational integration schemes in stochastic control tasks.

\appendices

\section{Computation of the second-order terms in the DEL linearization scheme}\label{appdel}
Here, we define the entries of the matrices $\Gamma^k_{DEL}$, $\Delta^k_{DEL}$, $\Xi^k_{DEL}$ and $\Lambda^k_{DEL}$ of eq. \eqref{dellin}. Specifically, the second-order derivatives of $\bold{Q}^{k+1}$ and $\hat{\bold{P}}^{k+1}$ will be determined with respect to $\bold{Q}^{k}$, $\hat{\bold{P}}^{k}$, $u^k$. The derivation is based on \cite{lin} and the results are included here for completeness. In the expressions below, we use $\times_i$ to denote the ``mode-$i$'' multiplication between a tensor and a matrix.

\scalebox{0.8}{\parbox{\linewidth}{
\begin{flalign*}
&\frac{\partial^2 \bold{Q}^{k+1}}{\partial \bold{Q}^k\partial\bold{Q}^k}=\bigg(\big[(D_2D_2D_1\hat L^k+D_2D_2\hat{\textbf{F}}^{k-})\times_3\bigg(\frac{\partial \bold{Q}^{k+1}}{\partial \bold{Q}^k}\bigg)^\top\big]\times_2\bigg(\frac{\partial \bold{Q}^{k+1}}{\partial \bold{Q}^k}\bigg)^\top\\
&+(D_2D_1D_1\hat L^k+D_2D_1\hat{\textbf{F}}^{k-})\times_3\bigg(\frac{\partial \bold{Q}^{k+1}}{\partial \bold{Q}^k}\bigg)^\top +D_1D_1D_1\hat L^k+D_1D_1\hat{\textbf{F}}^{k-}\\
&+(D_1D_2D_1\hat L^k+D_1D_2\hat{\textbf{F}}^{k-})\times_2\bigg(\frac{\partial \bold{Q}^{k+1}}{\partial \bold{Q}^k}\bigg)^\top
\bigg)\times_1(-\hat M^k)^{-1},&
\end{flalign*}}}

\scalebox{0.8}{\parbox{\linewidth}{
\begin{flalign*}
&\frac{\partial^2 \bold{Q}^{k+1}}{\partial \bold{Q}^k\partial\hat{\bold{P}}^k}=\bigg(\big[(D_2D_2D_1\hat L^k+D_2D_2\hat{\textbf{F}}^{k-})\times_3\bigg(\frac{\partial \bold{Q}^{k+1}}{\partial \bold{Q}^k}\bigg)^\top\big]\times_2\bigg(\frac{\partial \bold{Q}^{k+1}}{\partial \hat{\bold{P}}^k}\bigg)^\top\\
&+(D_2D_1D_1\hat L^k+D_2D_1\hat{\textbf{F}}^{k-})\times_3\bigg(\frac{\partial \bold{Q}^{k+1}}{\partial \hat{\bold{P}}^k}\bigg)^\top\bigg)\times_1(-\hat M^k)^{-1},&
\end{flalign*}}}

\scalebox{0.8}{\parbox{\linewidth}{
\begin{flalign*}
&\frac{\partial^2 \bold{Q}^{k+1}}{\partial \hat{\bold{P}}^k\partial\hat{\bold{P}}^k}=\bigg(\big[(D_2D_2D_1\hat L^k+D_2D_2\hat{\textbf{F}}^{k-})\times_3\bigg(\frac{\partial \bold{Q}^{k+1}}{\partial \hat{\bold{P}}^k}\bigg)^\top\big]\times_2\\
&\bigg(\frac{\partial \bold{Q}^{k+1}}{\partial \hat{\bold{P}}^k}\bigg)^\top\bigg)\times_1(-\hat M^k)^{-1},&
\end{flalign*}}}

\scalebox{0.8}{\parbox{\linewidth}{
\begin{flalign*}
&\frac{\partial^2 \bold{Q}^{k+1}}{\partial u^k\partial \bold{Q}^k}=\bigg(\big[(D_2D_2D_1\hat L^k+D_2D_2\hat{\textbf{F}}^{k-})\times_3\bigg(\frac{\partial \bold{Q}^{k+1}}{\partial u^k}\bigg)^\top\big]\times_2\bigg(\frac{\partial \bold{Q}^{k+1}}{\partial \bold{Q}^k}\bigg)^\top\\
&+D_3D_1\hat{\textbf{F}}^{k-}+D_3D_2\hat{\textbf{F}}^{k-}\times_2\bigg(\frac{\partial \bold{Q}^{k+1}}{\partial \bold{Q}^k}\bigg)^\top+&\\
&(D_2D_1D_1\hat L^k+D_2D_1\hat{\textbf{F}}^{k-})\times_3\bigg(\frac{\partial \bold{Q}^{k+1}}{\partial u^k}\bigg)^\top\bigg)\times_1(-\hat M^k)^{-1},&
\end{flalign*}}}

\scalebox{0.8}{\parbox{\linewidth}{
\begin{flalign*}
&\frac{\partial^2 \bold{Q}^{k+1}}{\partial u^k\partial \hat{\bold{P}}^k}=\bigg(\big[(D_2D_2D_1\hat L^k+D_2D_2\hat{\textbf{F}}^{k-})\times_3\bigg(\frac{\partial \bold{Q}^{k+1}}{\partial u^k}\bigg)^\top\big]\times_2\bigg(\frac{\partial \bold{Q}^{k+1}}{\partial \hat{\bold{P}}^k}\bigg)^\top\\
&+D_3D_2\hat{\textbf{F}}^{k-}\times_2\bigg(\frac{\partial \bold{Q}^{k+1}}{\partial \hat{\bold{P}}^k}\bigg)^\top\bigg)\times_1(-\hat M^k)^{-1},&
\end{flalign*}
}}

\scalebox{0.8}{\parbox{\linewidth}{
\begin{flalign*}
&\frac{\partial^2 \bold{Q}^{k+1}}{\partial u^k\partial u^k}=\bigg(\big[(D_2D_2D_1\hat L^k+D_2D_2\hat{\textbf{F}}^{k-})\times_3\bigg(\frac{\partial \bold{Q}^{k+1}}{\partial u^k}\bigg)^\top\big]\times_2\bigg(\frac{\partial \bold{Q}^{k+1}}{\partial u^k}\bigg)^\top\\
&+D_3D_2\hat{\textbf{F}}^{k-}\times_2\bigg(\frac{\partial \bold{Q}^{k+1}}{\partial u^k}\bigg)^\top+D_2D_3\hat{\textbf{F}}^{k-}\times_3\bigg(\frac{\partial \bold{Q}^{k+1}}{\partial u^k}\bigg)^\top+\\
&D_3D_3\hat{\textbf{F}}^{k-}\bigg)\times_1(-\hat M^k)^{-1},&
\end{flalign*}}}

\scalebox{0.8}{\parbox{\linewidth}{
\begin{flalign*}
&\frac{\partial^2 \hat{\bold{P}}^{k+1}}{\partial \textbf Q^k\partial \textbf Q^k}=\big[(D_2D_2D_2\hat L^k+D_2D_2\hat{\textbf{F}}^{k+})\times_3\bigg(\frac{\partial \bold{Q}^{k+1}}{\partial \bold{Q}^k}\bigg)^\top+D_1D_2D_2\hat L^k+\\
&D_1D_2\hat{\textbf{F}}^{k+}\big]\times_2\bigg(\frac{\partial \bold{Q}^{k+1}}{\partial \bold{Q}^k}\bigg)^\top+\frac{\partial^2 \bold{Q}^{k+1}}{\partial \textbf Q^k\partial \textbf Q^k}\times_1(D_2D_2\hat L^k+D_2\hat{\textbf{F}}^{k+})+\\
&(D_2D_1D_2\hat L^k+D_2D_1\hat{\textbf{F}}^{k+})\times_3\bigg(\frac{\partial \bold{Q}^{k+1}}{\partial \textbf Q^k}\bigg)^\top+D_1D_1D_2\hat L^k+D_1D_1\hat{\textbf{F}}^{k+},&
\end{flalign*}}}

\scalebox{0.8}{\parbox{\linewidth}{
\begin{flalign*}
&\frac{\partial^2 \hat{\bold{P}}^{k+1}}{\partial \textbf Q^k\partial \hat{\textbf P}^k}=\frac{\partial^2 \bold{Q}^{k+1}}{\partial \textbf Q^k\partial \hat{\textbf P}^k}\times_1(D_2D_2\hat L^k+D_2\hat{\textbf{F}}^{k+})+&\\
&(D_1D_2D_2\hat L^k+D_1D_2\hat{\textbf{F}}^{k+})\times_2\bigg(\frac{\partial \bold{Q}^{k+1}}{\partial \hat{\textbf P}^k}\bigg)^\top+&\\
&\big[(D_2D_2D_2\hat L^k+D_2D_2\hat{\textbf{F}}^{k+})\times_3\bigg(\frac{\partial \bold{Q}^{k+1}}{\partial \bold{Q}^k}\bigg)^\top\big]\times_2\bigg(\frac{\partial \bold{Q}^{k+1}}{\partial \hat{\bold{P}}^k}\bigg)^\top,&
\end{flalign*}}}

\scalebox{0.8}{\parbox{\linewidth}{
\begin{flalign*}
&\frac{\partial^2 \hat{\bold{P}}^{k+1}}{\partial \hat{\textbf P}^k\partial \hat{\textbf P}^k}=\big[(D_2D_2D_2\hat L^k+D_2D_2\hat{\textbf{F}}^{k+})\times_3\bigg(\frac{\partial \bold{Q}^{k+1}}{\partial \hat{\bold{P}}^k}\bigg)^\top\big]\times_2\bigg(\frac{\partial \bold{Q}^{k+1}}{\partial \hat{\bold{P}}^k}\bigg)^\top+\\
&\frac{\partial^2 \bold{Q}^{k+1}}{\partial \hat{\textbf P}^k\partial \hat{\textbf P}^k}\times_1(D_2D_2\hat L^k+D_2\hat{\textbf{F}}^{k+}),&
\end{flalign*}}}

\scalebox{0.8}{\parbox{\linewidth}{
\begin{flalign*}
&\frac{\partial^2 \hat{\bold{P}}^{k+1}}{\partial u^k\partial \textbf Q^k}=\big[(D_2D_2D_2\hat L^k+D_2D_2\hat{\textbf{F}}^{k+})\times_3\bigg(\frac{\partial \bold{Q}^{k+1}}{\partial u^k}\bigg)^\top\big]\times_2\bigg(\frac{\partial \bold{Q}^{k+1}}{\partial \bold{Q}^k}\bigg)^\top+\\
&D_3D_2\hat{\textbf{F}}^{k+}\times_2\bigg(\frac{\partial \bold{Q}^{k+1}}{\partial \textbf Q^k}\bigg)^\top+\frac{\partial^2 \bold{Q}^{k+1}}{\partial u^k\partial \textbf Q^k}\times_1(D_2D_2\hat L^k+D_2\hat{\textbf{F}}^{k+})+&\\
&(D_2D_1D_2\hat L^k+D_2D_1\hat{\textbf{F}}^{k+})\times_3\bigg(\frac{\partial \bold{Q}^{k+1}}{\partial u^k}\bigg)^\top+D_3D_1\hat{\textbf{F}}^{k+},&
\end{flalign*}}}

\scalebox{0.8}{\parbox{\linewidth}{
\begin{flalign*}
&\frac{\partial^2 \hat{\bold{P}}^{k+1}}{ \partial u^k\partial\hat{\textbf P}^k}=D_3D_2\hat{\textbf{F}}^{k+}\times_2\bigg(\frac{\partial \bold{Q}^{k+1}}{\partial \hat{\textbf P}^k}\bigg)^\top+\frac{\partial^2 \bold{Q}^{k+1}}{\partial u^k\partial \hat{\textbf P}^k}\times_1(D_2D_2\hat L^k+D_2\hat{\textbf{F}}^{k+})+&\\
&\big[(D_2D_2D_2\hat L^k+D_2D_2\hat{\textbf{F}}^{k+})\times_3\bigg(\frac{\partial \bold{Q}^{k+1}}{\partial u^k}\bigg)^\top\big]\times_2\bigg(\frac{\partial \bold{Q}^{k+1}}{\partial \hat{\bold{P}}^k}\bigg)^\top,&
\end{flalign*}}}

\scalebox{0.8}{\parbox{\linewidth}{
\begin{flalign*}
&\frac{\partial^2 \hat{\bold{P}}^{k+1}}{\partial u^k\partial u^k}=D_3D_3\hat{\textbf{F}}^{k+}+\frac{\partial^2 \bold{Q}^{k+1}}{\partial u^k\partial u^k}\times_1(D_2D_2\hat L^k+D_2\hat{\textbf{F}}^{k+})+&\\
&D_3D_2\hat{\textbf{F}}^{k+}\times_2\bigg(\frac{\partial \bold{Q}^{k+1}}{\partial u^k}\bigg)^\top+D_2D_3\hat{\textbf{F}}^{k+}\times_3\bigg(\frac{\partial \bold{Q}^{k+1}}{\partial u^k}\bigg)^\top+&\\
&\big[(D_2D_2D_2\hat L^k+D_2D_2\hat{\textbf{F}}^{k+})\times_3\bigg(\frac{\partial \bold{Q}^{k+1}}{\partial u^k}\bigg)^\top\big]\times_2\bigg(\frac{\partial \bold{Q}^{k+1}}{\partial u^k}\bigg)^\top.&
\end{flalign*}}}

\section{Convergence rate of gPC-DDP}\label{appconv}
Here, we provide the proof for Theorem \ref{thconv}. Denote by $U^{(l)}$ the controls given by gPC-DDP at its $l^{\text{th}}$ iteration, i.e., $U^{(l)}=\text{vec}[u^{0(l)},u^{1(l)},...,u^{K_f-1(l)}]\in\mathbb{R}^{m(K_f-1)}$, with $K_f$ being the time horizon and $\text{vec}[\cdot]$ the ``vec'' operator. Moreover, we denote the nominal control sequence by $\bar{U}=U^{(l-1)}$ and the updated one by $U=U^l$. Let also $U^*$ be the (sub)optimal solution that gPC-DDP converges to and $\textbf{X}^*$ the corresponding state trajectory.

The authors in \cite{liao000} showed that when $l$ becomes large enough, the line search parameter of the deterministic DDP can be set equal to 1. The same argument can be used for $\gamma$ in the controls update \eqref{newu} of gPC-DDP (proof is omitted). Hence, we will pick $\gamma=1$ for our analysis.

The goal is first to show that \eqref{convrate} is satisfied when $K_f=2$ and then generalize for arbitrary $K_f$. For $K_f=2$ the optimal control problem \eqref{gpcstochoptprob3} becomes in discrete time

\begin{equation*}
	\begin{split}
	\min_{u_0,u_1}\hspace{1.7mm}\bm J=\min_{u_0,u_1}\hspace{1.7mm}\bm L^0(\textbf X^0,u^0)+\bm L^1(\textbf X^1,u^1&)+\bm F(\textbf{X}^2)\\
	\text{s.t.}\quad\bold{X}^1=\bold f^0(\bold{X}^0,u^0),\quad\bold{X}^2=\bold f^1(\bold{X}^1,u^1)&,\quad\bold{X}^0 = \overline{\bold{X}}^0,
	\end{split}
	\end{equation*}
where $\textbf{f}^k$ is a discretized version of \eqref{gpcX}. Henceforth, we will write for convenience $\cdot|_{\textunderscore}$ or $\cdot|_*$ when the quantity ``$\cdot$" is evaluated at the nominal or (sub)optimal trajectory respectively. Implementing gPC-DDP as presented in section \ref{secgpcddp}, yields

\begin{itemize}
\item for $k=1$:
\end{itemize}
\begin{equation}
\label{qxx1_proof}
Q_{\textbf{xx}}^1=\bm{L}^1_{\textbf{xx}}|_{\textunderscore}+(\Theta^1)^\top\bm F_{\textbf{xx}}|_{\textunderscore}\Theta^1
+\sum_{i=1}^{\bm n}\big([\bm F_{\textbf{x}}]_i\nabla_{\textbf{x}\textbf{x}}[\textbf{f}^1]_i\big)|_{\textunderscore},
\end{equation}
\begin{equation}
\label{quu1_proof}
Q_{uu}^1=\bm{L}^1_{uu}|_{\textunderscore}+(B^1)^\top\bm F_{\textbf{xx}}|_{\textunderscore}B^1
+\sum_{i=1}^{\bm n}\big([\bm F_{\textbf{x}}]_i\nabla_{uu}[\textbf{f}^1]_i\big)|_{\textunderscore},
\end{equation}
\begin{equation}
\label{qxu1_proof}
Q_{\textbf{x}u}^1=(Q_{u\textbf{x}}^1)^\top=\bm{L}^1_{\textbf{x}u}|_{\textunderscore}+(\Theta^1)^\top\bm F_{\textbf{xx}}|_{\textunderscore}B^1
+\sum_{i=1}^{\bm n}\big([\bm F_{\textbf{x}}]_i\nabla_{\textbf{x}u}[\textbf{f}^1]_i\big)|_{\textunderscore},
\end{equation}
\begin{equation}
\label{qu1_proof}
Q_{u}^1=\bm{L}^1_{u}|_{\textunderscore}+(B^1)^\top\bm F_{\textbf{x}}|_{\textunderscore},
\end{equation}
\begin{equation}
\label{qx1_proof}
Q_{\textbf x}^1=\bm{L}^1_{\textbf x}|_{\textunderscore}+(\Theta^1)^\top\bm F_{\textbf{x}}|_{\textunderscore},
\end{equation}
\begin{equation}
\label{l1_proof}
\ell^1=(-Q_{uu}^1)^{-1}Q_u^1,
\end{equation}
\begin{equation}
\label{s1_proof}
\Sigma^1=(-Q_{uu}^1)^{-1}Q_{u\textbf x}^1,
\end{equation}
\begin{equation}
\label{vx1_proof}
V_{\textbf{x}}^1=Q_{\textbf{x}}^1+Q_{\textbf{x}u}^1\ell^1,
\end{equation}
\begin{equation}
\label{vxx1_proof}
V_{\textbf{xx}}^1=Q_{\textbf{xx}}^1+Q_{\textbf{x}u}^1\Sigma^1,
\end{equation}
\begin{equation}
\label{u1_proof}
u^1=\bar{u}^1+\ell^1+\Sigma^1\delta\textbf{X}^1,
\end{equation}

\begin{itemize}
\item for $k=0$:
\end{itemize}
\begin{equation}
\begin{split}
\label{quu0_proof}
Q_{uu}^0=&\bm{L}^0_{uu}|_{\textunderscore}+(B^0)^\top\big(Q_{\textbf{xx}}^1-Q_{\textbf{x}u}^1(Q_{uu}^1)^{-1}Q_{u\textbf x}^1\big)B^0\\
&+\sum_{i=1}^{\bm n}\big([Q_{\textbf{x}}^1-Q_{\textbf{x}u}^1(Q_{uu}^1)^{-1}Q_{u}^1]_i\nabla_{uu}[\textbf{f}^0]_i\big)\big|_{\textunderscore},
\end{split}
\end{equation}
\begin{equation}
\label{qu0_proof}
Q_{u}^0=\bm{L}^0_{u}|_{\textunderscore}+(B^0)^\top\big(Q_{\textbf{x}}^1-Q_{\textbf{x}u}^1(Q_{uu}^1)^{-1}Q_{u}^1\big),
\end{equation}
\begin{equation}
\label{l0_proof}
\ell^0=(-Q_{uu}^0)^{-1}Q_u^0,
\end{equation}
\begin{equation}
\label{u0_proof}
u^0=\bar{u}^0+\ell^0.
\end{equation}
Moreover, \eqref{deltax} gives
\begin{equation}
\label{deltax1}
\delta\textbf X^1=B^0\delta u^0+O(||\delta u^0||^2),
\end{equation}
\begin{equation}
\label{deltax2}
\delta\textbf X^2=\Theta^1B^0\delta u^0+B^1\delta u^1+O(||\delta U||^2).
\end{equation}
In the expressions above we have used that $\delta\bold{X}^0=0$. Now, since $U^*$ is a stationary point we have
\begin{equation*}
\nabla_{u^1}\bm J|_*=0
\end{equation*}
\begin{equation}
\label{gradJ_u1}
\Rightarrow\big(\bm{L}^1_{u}+(\nabla_{u}\textbf{f}^1)^\top\bm F_{\textbf{x}}\big)\big|_*=0,
\end{equation}
\begin{equation*}
\nabla_{u^0}\bm J|_*=0
\end{equation*}
\begin{equation}
\label{gradJ_u0}
\Rightarrow\big(\bm{L}^0_{u}+(\nabla_{u}\textbf{f}^0)^\top\bm{L}^1_{\textbf x}+(\nabla_{u}\textbf{f}^0)^\top(\nabla_{\textbf x}\textbf{f}^1)^\top\bm F_{\textbf{x}}\big)\big|_*=0.
\end{equation}
Next, we expand $\nabla_{u^1}\bm J|_*$ about the nominal trajectory
\begin{equation*}
\begin{split}
&\big(\bm{L}^1_{u}+(\nabla_{u}\textbf{f}^1)^\top\bm F_{\textbf{x}}\big)\big|_*-\big(\bm{L}^1_{u}+(\nabla_{u}\textbf{f}^1)^\top\bm F_{\textbf{x}}\big)\big|_{\textunderscore}=\\
&\bm{L}^1_{uu}|_{\textunderscore}(u^{*1}-\bar{u}^1)+\bigg(\bm{L}^1_{u\textbf x}+\sum_{i=1}^{\bm n}\big([\bm F_{\textbf{x}}]_i\nabla_{u\textbf{x}}[\textbf{f}^1]_i\big)\bigg)\bigg|_{\textunderscore}(\textbf X^{*1}-\overline{\textbf X}^1)+\\
&\sum_{i=1}^{\bm n}\big([\bm F_{\textbf{x}}]_i\nabla_{uu}[\textbf{f}^1]_i\big)|_{\textunderscore}(u^{*1}-\bar{u}^1)+\big((\nabla_u\textbf f^1)^\top\bm F_{\textbf{xx}}\big)\big|_{\textunderscore}(\textbf X^{*2}-\overline{\textbf X}^2)+\\
&O\bigg(\left\Vert\big((\textbf X^{*1})^\top,(\textbf X^{*2})^\top,(u^{*1})^\top\big)-\big((\overline{\textbf X}^1)^\top,(\overline{\textbf X}^2)^\top,(\bar{u}^1)^\top\big)\right\Vert^2\bigg).
\end{split}
\end{equation*}
Rearranging and using \eqref{quu1_proof} -- \eqref{qu1_proof} and \eqref{deltax1} -- \eqref{gradJ_u1} yields
\begin{equation*}
\begin{split}
&\underbrace{\big(\bm{L}^1_{u}+(B^1)^\top\bm F_{\textbf{x}}\big)\big|_{\textunderscore}}_{Q_u^1}=\\
&\underbrace{\big(\bm{L}^1_{uu}+\sum_{i=1}^{\bm n}[\bm F_{\textbf{x}}]_i\nabla_{uu}[\textbf{f}^1]_i+(B^1)^\top\bm F_{\textbf{xx}}B^1\big)\big|_{\textunderscore}}_{Q_{uu}^1}(\bar{u}^1-u^{*1})+\\
&\underbrace{\big(\bm{L}^1_{u\textbf x}+\sum_{i=1}^{\bm n}[\bm F_{\textbf{x}}]_i\nabla_{u\textbf x}[\textbf{f}^1]_i+(B^1)^\top\bm F_{\textbf{xx}}\Theta^1\big)\big|_{\textunderscore}}_{Q_{u\textbf x}^1}B^0(\bar{u}^0-u^{*0})\\
&+O(||\bar{U}-U^*||^2)
\end{split}
\end{equation*}
\begin{equation}
\label{expr_19}
\Rightarrow Q_u^1=Q_{uu}^1(\bar{u}^1-u^{*1})+Q_{u\textbf x}^1B^0(\bar{u}^0-u^{*0})+O(||\bar{U}-U^*||^2).
\end{equation}
Similarly, $\nabla_{u^0}\bm J|_*$ is expanded as follows
\begin{equation*}
\begin{split}
&\big(\bm{L}^0_{u}+(\nabla_{u}\textbf{f}^0)^\top\bm{L}^1_{\textbf x}+(\nabla_{u}\textbf{f}^0)^\top(\nabla_{\textbf x}\textbf{f}^1)^\top\bm F_{\textbf{x}}\big)\big|_*-\\
&\big(\bm{L}^0_{u}+(\nabla_{u}\textbf{f}^0)^\top\bm{L}^1_{\textbf x}+(\nabla_{u}\textbf{f}^0)^\top(\nabla_{\textbf x}\textbf{f}^1)^\top\bm F_{\textbf{x}}\big)\big|_{\textunderscore}=\bm{L}^0_{uu}|_{\textunderscore}(u^{*0}-\bar{u}^0)\\
&+\sum_{i=1}^{\bm n}([\bm{L}^1_{\textbf x}]_i\nabla_{uu}[\textbf{f}^0]_i)|_{\textunderscore}(u^{*0}-\bar{u}^0)+\big((\nabla_u\textbf f^0)^\top)\bm{L}^1_{\textbf {xx}}\big)\big|_{\textunderscore}(\textbf X^{*1}-\overline{\textbf X}^1)+\\
&\big((\nabla_u\textbf f^0)^\top\bm{L}^1_{\textbf xu}\big)\big|_{\textunderscore}(u^{*1}-\bar{u}^1)+\\
&\sum_{i=1}^{\bm n}\big([(\nabla_{\textbf x}\textbf{f}^1)^\top\bm F_{\textbf{x}}]_i \nabla_{uu}[\textbf{f}^0]_i\big)\big|_{\textunderscore}(u^{*0}-\bar{u}^0)+\\
&\big((\nabla_{u}\textbf{f}^0)^\top\sum_{i=1}^{\bm n}[\bm F_{\textbf{x}}]_i\nabla_{\textbf {xx}}[\textbf{f}^{1}]_i\big)\big|_{\textunderscore}(\textbf X^{*1}-\overline{\textbf X}^1)+\\
&\big((\nabla_{u}\textbf{f}^0)^\top\sum_{i=1}^{\bm n}[\bm F_{\textbf{x}}]_i\nabla_{\textbf {x}u}[\textbf{f}^{1}]_i\big)\big|_{\textunderscore}(u^{*1}-\bar{u}^1)+\\
&\big((\nabla_{u}\textbf{f}^0)^\top(\nabla_{\textbf x}\textbf{f}^1)^\top\bm F_{\textbf{xx}}\big)\big|_{\textunderscore}(\textbf X^{*2}-\overline{\textbf X}^2)+\\
&O\bigg(\left\Vert\big((\textbf X^{*1})^\top,(\textbf X^{*2})^\top,(U^{*})^\top\big)-\big((\overline{\textbf X}^1)^\top,(\overline{\textbf X}^2)^\top,(\bar{U})^\top\big)\right\Vert^2\bigg).
\end{split}
\end{equation*}
Rearranging and using \eqref{qxx1_proof}, \eqref{qxu1_proof}, \eqref{qx1_proof}, \eqref{deltax1}, \eqref{deltax2}, \eqref{gradJ_u0} gives
\begin{equation*}
\begin{split}
&\big(\bm{L}^0_{u}+(B^0)^\top\underbrace{(\bm{L}^1_{\textbf x}+(\Theta^1)^\top\bm F_{\textbf{x}})}_{Q_{\textbf x}^1}\big)\big|_{\textunderscore}=\\
&\bigg(\bm{L}^0_{uu}+\sum_{i=1}^{\bm n}\big[\underbrace{\bm{L}^1_{\textbf x}+(\Theta^1)^\top\bm F_{\textbf{x}}}_{Q_{\textbf x}^1}\big]_i\nabla_{uu}[\textbf{f}^0]_i+\\
&(B^0)^\top\big(\underbrace{\bm{L}^1_{\textbf {xx}}+\sum_{i=1}^{\bm n}[\bm F_{\textbf{x}}]_i\nabla_{\textbf {xx}}[\textbf{f}^{1}]_i+(\Theta^1)^\top\bm F_{\textbf{xx}}\Theta^1\big)}_{Q_{\textbf{xx}}^1}B^0\bigg)\bigg|_{\textunderscore}(\bar{u}^0-u^{*0})\\
&+(B^0)^\top\underbrace{\bigg(\bm{L}^1_{\textbf xu}+\sum_{i=1}^{\bm n}[\bm F_{\textbf{x}}]_i\nabla_{\textbf {x}u}[\textbf{f}^{1}]_i+(\Theta^1)^\top\bm F_{\textbf{xx}}B^1\bigg)}_{Q_{\textbf xu}^1}\bigg|_{\textunderscore}(\bar{u}^1-u^{*1})\\
&+O(||\bar{U}-U^*||^2)
\end{split}
\end{equation*}
\begin{equation}
\label{expr_20}
\begin{split}
&\Rightarrow\big(\bm{L}^0_{u}+(B^0)^\top Q_{\textbf x}^1\big)\big|_{\textunderscore}=\\
&\bigg(\bm{L}^0_{uu}+\sum_{i=1}^{\bm n}\big[Q_{\textbf x}^1\big]_i\nabla_{uu}[\textbf{f}^0]_i+(B^0)^\top Q_{\textbf{xx}}^1B^0\bigg)\bigg|_{\textunderscore}(\bar{u}^0-u^{*0})\\
&+(B^0)^\top Q_{\textbf xu}^1(\bar{u}^1-u^{*1})+O(||\bar{U}-U^*||^2).
\end{split}
\end{equation}
Now, utilizing \eqref{l1_proof}, \eqref{s1_proof}, \eqref{u1_proof}, \eqref{deltax1} and \eqref{expr_19}, the new control at $k=1$ is obtained as follows
\begin{equation*}
\begin{split}
u^1=&\bar{u}^1+(u^{*1}-\bar{u}^1)+(Q_{uu}^1)^{-1}Q_{u\textbf x}^1B^0(u^{*0}-\bar{u}^0)+\\
&(Q_{uu}^1)^{-1}Q_{u\textbf{x}}^1B^0(\bar{u}^0-u^{0})+O(||U^*-\bar{U}||^2)
\end{split}
\end{equation*}
\begin{equation}
\label{expr_23}
\Rightarrow u^1-u^{*1}=-(Q_{uu}^1)^{-1}Q_{u\textbf x}^1B^0(\bar{u}^0-u^{*0})+O(||\bar{U}-U^*||^2).
\end{equation}
Additionally, we plug \eqref{expr_19} into \eqref{quu0_proof} and get the following expression
\begin{equation}
\begin{split}
\label{expr_24}
&Q_{uu}^0=\bm{L}^0_{uu}|_{\textunderscore}+(B^0)^\top\big(Q_{\textbf{xx}}^1-Q_{\textbf{x}u}^1(Q_{uu}^1)^{-1}Q_{u\textbf x}^1\big)B^0+\\
&\sum_{i=1}^{\bm n}\bigg([Q_{\textbf{x}}^1-Q_{\textbf{x}u}^1(\bar{u}^1-u^{*1})-\\
&Q_{\textbf{x}u}^1(Q_{uu}^1)^{-1}Q_{u\textbf x}^1B^0(\bar{u}^0-u^{*0})]_i\nabla_{uu}[\textbf{f}^0]_i\bigg)\bigg|_{\textunderscore}\\
&+O(||\bar{U}-U^*||^2).
\end{split}
\end{equation}
Substituting \eqref{expr_23} into \eqref{expr_24} gives
\begin{equation}
\begin{split}
\label{expr_24_new}
&Q_{uu}^0=\bm{L}^0_{uu}|_{\textunderscore}+(B^0)^\top\big(Q_{\textbf{xx}}^1-Q_{\textbf{x}u}^1(Q_{uu}^1)^{-1}Q_{u\textbf x}^1\big)B^0+\\
&\sum_{i=1}^{\bm n}\big([Q_{\textbf{x}}^1]_i\nabla_{uu}[\textbf{f}^0]_i\big)\big|_{\textunderscore}+O(||\bar{U}-U^*||^2).
\end{split}
\end{equation}
In a similar manner, we will get a new expression for $Q_u^0$. Specifically, plugging \eqref{expr_19} into \eqref{qu0_proof} yields
\begin{equation}
\label{expr_25}
\begin{split}
Q_{u}^0=&\bm{L}^0_{u}|_{\textunderscore}+(B^0)^{\top}Q_{\textbf{x}}^1-(B^0)^{\top}Q_{\textbf{x}u}^1|_{\textunderscore}(\bar{u}^1-u^{1*})-\\
&(B^0)^{\top}Q_{\textbf{x}u}^1(Q_{uu}^1)^{-1}Q_{u\textbf x}^1B^0(\bar{u}^0-u^{*0})+O(||\bar{U}-U^*||^2).
\end{split}
\end{equation}
Substituting the right-hand side of \eqref{expr_20} into \eqref{expr_25} gives
\begin{equation}
	\begin{split}
	\label{expr_26}
	&Q_{u}^0=\big(\bm{L}^0_{uu}+(B^0)^{\top}(Q_{\textbf{xx}}^1-Q_{\textbf{x}u}^1(Q_{uu}^1)^{-1}Q_{u\textbf x}^1)B^0\\
	&\sum_{i=1}^n\big[Q_{\textbf x}^1\big]_i\nabla_{uu}[\textbf{f}^0]_i\big)\big|_{\textunderscore}(\bar{u}^0-u^{*0})+O(||\bar{U}-U^*||^2).
	\end{split}
	\end{equation}
	Finally, from \eqref{l0_proof}, \eqref{u0_proof}, \eqref{expr_24_new} and \eqref{expr_26} the control at $k=0$ will satisfy
\begin{equation}
\label{u0_quad}
u^0-u^{*0}=O(||\bar{U}-U^*||^2)
\end{equation}
and hence from \eqref{expr_23}
\begin{equation}
\label{u1_quad}
u^1-u^{*1}=O(||\bar{U}-U^*||^2).
\end{equation}
Expressions \eqref{u0_quad} and \eqref{u1_quad} imply that $\exists$ $c>0$ such that for $K_f=2$, the following is true
\begin{equation}
\label{expr_quad}
||U-U^*||\leq c||\bar{U}-U^*||^2,
\end{equation}
i.e., the convergence rate is greater than or equal to quadratic.

Now assume that \eqref{expr_quad} is true for the generic problem
\begin{equation}
\label{j_h}
 \min_U \bigg[\sum_{i=0}^{H-1}L^i(\textbf X^i,u^i)+\bm F(\textbf{X}^H)\bigg].
\end{equation}
The goal is to show that a similar expression to \eqref{expr_quad} can be obtained for $K_f=H+1$. In that case, the cost function is written as
\begin{equation*}
J= \sum_{i=0}^{H-1}L^i(\textbf X^i,u^i)+L^H(\textbf X^H,u^H)+\bm F(\textbf{X}^{H+1}).
\end{equation*}
By utilizing the dynamics and controls given by gPC-DDP at $k=H$, the above expression is equivalent to
\begin{equation}
\label{j_h1}
\begin{split}
&J= \bm F\big(\textbf{f}^{H}(\textbf X^{H},\bar{u}^H+\ell^H+\Sigma_H(\textbf X^{H}-\overline{\textbf X}^H)\big)+\sum_{i=0}^{H-1}L^i(\textbf X^i,u^i)+\\
&L^H\big(\textbf{f}^{H-1}(\textbf X^{H-1},u^{H-1}),\bar{u}^H+\ell^H+\\
&\Sigma_H(\textbf{f}^{H-1}(\textbf X^{H-1},u^{H-1})-\overline{\textbf X}^H\big).
\end{split}
\end{equation}
Let $U(H-1)$ represent the controls up to time instant $H-1$ (i.e., $U(H-1)=(u^0,...,u^{H-1})^\top$). Since gPC-DDP is based on Bellman's principle of optimality \eqref{v17}, one can readily show that solving problems \eqref{j_h} and \eqref{j_h1} for fixed nominal trajectories $\overline{X}$, produces the same control sequences $U(H-1)$. Hence, by assumption, $\exists$ $c_{H-1}>0$ for which
\begin{equation*}
||U(H-1)-U(H-1)^*||\leq c_{H-1}||\bar{U}(H-1)-U(H-1)^*||^2,
\end{equation*}
It remains to show that $\exists$ $c_H>0$ such that the control at $k=H$ satisfies
\begin{equation*}
||u^H-u^{*H}||\leq c_H||\bar{u}^H-u^{*H}||^2.
\end{equation*}
Towards that goal we can use an approach similar to Newton's method (e.g, \cite{liao000}, \cite{Nocedal}). Specifically
\begin{equation*}
\begin{split}
&||u^H-u^{*H}||=||\bar{u}^H+\ell^H+\Sigma_H(\textbf X^{H}-\overline{\textbf X}^H)-u^{*H}||\\
&\leq||\bar{u}^H-(Q_{uu}^H|_{\textunderscore})^{-1}Q_u^H|_{\textunderscore}-u^{*H}||\\
&=||(Q_{uu}^H|_{\textunderscore})^{-1}\big(Q_{uu}^H|_{\textunderscore}(\bar{u}^H-u^{*H})-(Q_u^H|_{\textunderscore}-\underbrace{Q_u^{H}|_*}_{=0})\big)||
\end{split}
\end{equation*}
Since by assumption the dynamics and cost functions are differentiable up to the third order, $Q_{u}^H$ will be second-order differentiable. Hence by using Taylor's formula one will have\footnote{In these particular expressions, $Q_u$ and $Q_{uu}$ are evaluated either at the nominal or the optimal trajectory, hence the notation $|_{\textunderscore}$ or $|_*$.}
\begin{equation*}
\begin{split}
||u^H-u^{*H}||&\leq||(Q_{uu}^H|_{\textunderscore})^{-1}||\hspace{1mm}||\big(Q_{uu}^H|_{\textunderscore}(\bar{u}^H-u^{*H})-\\
&\int_0^1Q_{uu}^H|_{\bar{u}^H+t(u^{*H}-\bar{u}^H)}(\bar{u}^H-u^{*H})\rd t||.
\end{split}
\end{equation*}
Using the Lipschitz continuity of $Q_u$ gives
\begin{equation*}
\begin{split}
&||u^H-u^{*H}||\leq\\
&||(Q_{uu}^H)^{-1}||\int_0^1||\underbrace{(Q_{uu}^H|_{\textunderscore}-Q_{uu}^H|_{\bar{u}^H+t(u^{*H}-\bar{u}^H)}}_{\leq\int_0^1Zt\rd t||\bar{u}^H-u^{*H}||}||\rd t||\bar{u}^H-u^{*H}||.
\end{split}
\end{equation*}
\begin{equation*}
\Rightarrow||u^H-u^{*H}||\leq c_H||\bar{u}^H-u^{*H}||^2,
\end{equation*}
where $Z$ is the Lipschitz constant and $c_H=\frac{1}{2}\max(||(Q_{uu}^H)^{-1}||)Z>0$. Note that $c_H$ is bounded since by assumption, $Q_{uu}^H$ is positive definite. Consequently, the proof is complete by induction.

\bibliographystyle{IEEEtran}
\bibliography{IEEEabrv,mybib}

\end{document}